\documentclass[a4paper,reqno]{amsart}
\pdfoutput=1
\usepackage[T1]{fontenc}
\usepackage{lmodern}

\addtolength{\voffset}{-5pt}

\usepackage{amsmath,amsfonts,amssymb,amsthm,stmaryrd,bbm,graphicx,mathtools,enumerate}
\usepackage{mathrsfs}
\usepackage[T1]{fontenc} 
\usepackage[latin1]{inputenc}
\usepackage[english]{babel}
\usepackage[tracking,spacing,kerning,babel]{microtype}
\usepackage{wasysym} 
\usepackage{color}
\usepackage{xcolor}
\usepackage{framed} 

\usepackage{wrapfig} 

\usepackage[normalem]{ulem}
\usepackage{soul}

\usepackage[final]{hyperref}   
\hypersetup{
    linktoc=page,
    linkcolor=red,          
    citecolor=blue,        
    filecolor=blue,      
    urlcolor=cyan,
   colorlinks=true           
}

\usepackage{lipsum} 

\usepackage[toc,page]{appendix}

\DeclareRobustCommand{\SkipTocEntry}[5]{}


\usepackage{multicol}

\newtheoremstyle{mine}
{\baselineskip}
{\baselineskip}
{\itshape}
{
}
{\bfseries}
{.}
{.5em}
{#1 #2\ifx#3\relax\else~(#3)\fi}

\theoremstyle{mine}

\newtheorem{theorem}{Theorem}
\numberwithin{theorem}{section}
\newtheorem{corollary}[theorem]{Corollary}
\newtheorem{proposition}[theorem]{Proposition}
\newtheorem{lemma}[theorem]{Lemma}

\newtheorem{definition}[theorem]{Definition}
\newtheorem{conjecture}{Conjecture}
 
\numberwithin{equation}{section}

\theoremstyle{remark}
\newtheorem{remark}{Remark}

\theoremstyle{remark}
\newtheorem{question}{Question}
\newtheorem{example}{Example}



\colorlet{shadecolor}{blue!10}

\marginparsep = 0.3 cm
\marginparwidth = 2.5 cm




\def\rm{\reversemarginpar}


\let\qed=\QED


\renewcommand{\epsilon}{\varepsilon}

\newcommand{\R}{\mathbb{R}}

\newcommand{\C}{\mathbb{C}}
\newcommand{\Z}{\mathbb{Z}}
\newcommand{\N}{\mathbb{N}}

\def\T{\mathbb{T}}

\def\calC{\mathcal{C}}

\def\calE{\mathcal{E}}
\def\calF{\mathcal{F}}

\def\calL{\mathcal{L}}
\def\calH{\mathcal{H}}

\def\calN{\mathcal{N}}
\def\calQ{\mathcal{Q}}
\def\calR{\mathcal{R}}
\def\calS{\mathcal{S}}

\newcommand{\hf}{\frac{_1}{^2}}

\newcommand{\caE}{{\mathcal E}}
\newcommand{\caF}{{\mathcal F}}

\newcommand{\caL}{{\mathcal L}}

\newcommand{\caO}{{\mathcal O}}

\newcommand{\caR}{{\mathcal R}}

\def\SLE{\mathrm{SLE}}
\def\CLE{\mathrm{CLE}}

\def\Var#1{\mathrm{Var}\bigl[ #1\bigr]}

\def\Cov#1{\mathrm{Cov}\bigl[ #1\bigr]}

\def\dist{\mathrm{dist}}

\def\P{\mathbb{P}} 
\def\E{\mathbb{E}} 
\def\md{\mid}

\def \eps {\epsilon}

\def\Bb#1#2{{\def\md{\bigm| }#1\bigl[#2\bigr]}}

\def\Pb{\Bb\P}
\def\Eb{\Bb\E}

\def\FK#1#2#3{{\def\md{\bigm| } \P_{#1}^{\,#2}  \bigl[  #3 \bigr]}}
\def\EFK#1#2#3{{\def\md{\bigm| } \E_{#1}^{\,#2}  \bigl[  #3 \bigr]}}

\def \p {{\partial}}

\def\<#1{\langle #1\rangle}

\definecolor{darkgreen}{rgb}{0,0.6,0.05}

\def\Wick#1{\,\colon\!\! \, #1 \, \!\colon}

\def\nn{\nonumber}
\def\bi{\begin{itemize}}  
\def\ei{\end{itemize}}
\def\bnum{\begin{enumerate}} 
\def\enum{\end{enumerate}}
\def\ni{\noindent}





\title[Examples of singular fields in QFT]
{Energy field of critical Ising model and examples of singular fields in QFT}


\author{Christophe Garban}
\author{Antti Kupiainen}


\address
{Universit\'e Claude Bernard Lyon 1, CNRS UMR 5208, Institut Camille Jordan, 69622 Villeurbanne, France, and Institut Universitaire de France (IUF)}
\email{garban@math.univ-lyon1.fr}

\address
{University of Helsinki}
\email{antti.kupiainen@helsinki.fi}


\setcounter{tocdepth}{1} 
%
%
%

\def\hr{\mathrm{hr}}

\begin{document}

\maketitle

\begin{abstract}
The goal of this paper is to prove singularity of three natural fields in QFT with respect to their natural base measure. The fields we consider are the following ones:
\begin{enumerate}
\item The near-critical limit of the $2d$ Ising model (in the $\beta$-direction) is locally singular w.r.t the critical scaling limit of $2d$ Ising. (N.B. In the $h$-direction it is not locally singular). 
\item The $2d$ Hierarchical Sine-Gordon field is singular w.r.t the $2d$ hierarchical Gaussian Free Field for all $\beta\in[\beta_{L^2}, \beta_{BKT})$. 
\item The Hierarchical $\Phi^4_3$ field is singular w.r.t the $3d$ hierarchical GFF. 
\end{enumerate}
\smallskip
Item (1) gives the first strong indication that the energy field of critical $2d$ Ising model does not exist as a random Schwarz distribution on the plane.
Item (2) has been proved to be singular for the non-hierarchical $2d$ Sine-Gordon sufficiently far from the BKT point in  \cite{gubinelli2024fbsde} while  item (3) is proved to be singular for the non-hierarchical $3d$ $\Phi^4_3$ field in \cite{barashkov2021varphi,oh2021stochastic, hairer2024singularity}.

We believe our way to detect a singular behaviour at all scales is very much {\em down to earth} and may be applicable in all settings where one has a good enough control on the so-called effective potentials.
\end{abstract}

\tableofcontents

\section{Introduction}

\subsection{Main results.}
The goal of this paper is to prove that the following three probability measures on ``QFT fields'' are singular with respect to their natural ``non-interacting'' base measure:
\bnum
\item The scaling limit of the $2d$ near-critical Ising model (in the $\beta$-direction) is locally singular with respect to the $2d$ critical Ising model.
\item The hierarchical Sine-Gordon field is singular with respect to the hierarchical GFF for all inverse temperatures $\beta\in [\beta_{L^2}, \beta_{BKT})$, i.e. all the way to the BKT transition. 
\item The hierarchical $\Phi^4_3$ field is singular with respect to the hierarchical GFF (and this holds for all ``hierarchical dimensions'' $8/3\leq d < 4$). 
\enum

In each of these cases, singularity is detected for a suitable choice of topology and $\sigma$-field. This is particularly relevant for item (1) where several drastically different topologies exist in order to consider the scaling limit of (near)-critical planar Ising model. For the Ising model, we shall detect singularity for events measurable with respect to the {\em quad crossing topology} introduced in \cite{schramm2011scaling} (see the beginning of Section \ref{s.Ising} for the different natural topologies which exist for the Ising model). 

The main theorems as well as the precise definitions of these 3 models will be given further below in the text:
\bnum
\item {\em Ising model.} Our main result is the singularity statement in Theorem \ref{th.mainI}. See also Proposition \ref{pr.tight} for the exact probability measures considered. 
\item {\em Hierarchical Sine-Gordon.} Our main result is Theorem \ref{th.mainSG} and the proper definition of the Sine-Gordon (SG) field is in Definition \ref{d.SG}. 
Note that the construction of the SG field itself is non-trivial and is of independent interest. We believe this section may serve as a gentle introduction to {\em RG flow} techniques in the {\em ultra-violet (UV)} regime. See also the survey 
\cite{gallavotti2014renormalization}. 

N.B. The case of the hierarchical  Sine-Gordon model in the {\em infra-red (IR)} limit and when $\beta$ is assumed to be large enough is the focus of the work by Dimock \cite{dimock1990kosterlitz}.

\item {\em Hierarchical $\Phi^4_3$.}  Our main result is found  in Theorem  \ref{th.mainPhi}. 
\enum

Since the early stages of this work, several notable singularity results  have been shown, in particular \cite{barashkov2021varphi,oh2021stochastic, hairer2024singularity} on the true (non-hierarchical) $\Phi^4_3$ field in a finite window as well as \cite{gubinelli2024fbsde} for the true (non-hierarchical) Sine-Gordon field on $[0,1]^2$ up to $\beta^2 < 6\pi < \beta^2_{BKT}=8\pi$. Our contributions to the near-critical Ising as well as to the Sine-Gordon field up to the BKT phase transition (though in the hierarchical setting which is simpler) are novel. We also believe that the present way of detecting singularity is very natural and should extend to all settings in QFT where singularity is suspected to hold (at least when a good control on {\em effective potentials} is known, see the discussion in the sketch of proof below).

\subsection{Motivation: non-existence of fields (in the sense of distributions).}\label{ss.motivation}
{\em This section is independent of the rest and may be skipped at first reading.} 

The main motivation which guided this work has to do with the nature of the fields which are at play in {\em conformal field theory} (CFT). 
For the celebrated $2d$ critical Ising model, the relevant CFT  is known to be generated by the three primary fields
\begin{align*}\label{}
1, \sigma, \eps
\end{align*}
which respectively stand for the identity operator, the {\em spin field} and the {\em energy field}. In CFT, these fields come with a rich structure called {\em $k$-point correlation functions} which quantify how these fields are correlated together and interact together. For example 
\begin{align*}\label{}
z_1,\ldots,z_k  \mapsto \<{\sigma(z_1) \ldots \sigma(z_k)}_\Omega \text{  or  } z_1,\ldots,z_k  \mapsto \<{\eps(z_1,\ldots,z_k)}_\Omega
\end{align*}
are functions of $k$ points inside some prescribed domain $\Omega \subset \C$ and describe the internal correlations of respectively the {\em magnetic field} $\sigma$ and the {\em energy field} $\epsilon$. The major breakthrough from \cite{belavin1984infinite} which initiated the construction of conformal field theory was the observation that if the underlying model is conformally invariant and if these $k$-point correlation functions do exist, they must satisfy substantial constraints. An additional structure arises when considering {\em operator product expansions} (OPE) which analyse the way mixed $k$-point correlation functions, such as 
\begin{align*}\label{}
z_1,z_2,z_3  \mapsto \<{\sigma(z_1) \sigma(z_2) \eps(z_3) }
\end{align*}
for example, behave as two insertions, $z_1,z_2$ say, come close to each other (an additional field called the {\em stress tensor $T(z)$} plays a key role there). The power of CFT is that by making few legitimate assumptions on the behaviour of these functions, the constraints on these become so vast that they can then be identified and computed exactly! We refer to the recent book \cite{clementCFT} and references therein for a beautiful account of this theory.

Since the impressive achievement of \cite{belavin1984infinite}, theoretical physicists interested in critical phenomena have  been focusing on the description of these correlation functions in more and more complex settings. (See for example the recent breakthrough on the physics side with the $3d$ conformal bootstrap program \cite{poland2019conformal}). 

On the mathematical side, one may ask the following two natural questions about these {\em fields}. 
\begin{question}\label{q.1}
How do these fields originate from concrete {\em local fields} on a small mesh  lattice ? 
\end{question}
In some cases, the corresponding fields on the lattice are well identified but proving they converge to the CFT correlation functions predicted in \cite{belavin1984infinite} is extremely challenging. For example the breakthrough works 
\cite{chelkak2015conformal,hongler2013energy,chelkak2021correlations, chelkak2023universality}
prove among other things that the following correlation functions of true random variables on a lattice $\Omega_n:=\frac 1 n \Z^2 \cap \Omega$ of mesh $n^{-1}$ do converge to their expected CFT limit:
\begin{align*}\label{}
&n^{- \frac k 8} \<{\sigma(z_1^{(n)}) \ldots z_{k}^{(n)}}_{\beta_c, \Omega_n}  \underset{n\to\infty}\longrightarrow \<{\sigma(z_1) \ldots \sigma(z_k)}_\Omega \\
& n^{-k} \<{(\sigma(z_1^{(n)})\sigma(z_1^{(n)}+\tfrac 1 n)-\tfrac {\sqrt{2}} 2) \ldots (\sigma(z_k^{(n)})\sigma(z_k^{(n)}+\tfrac 1 n)-\tfrac {\sqrt{2}} 2) }  \underset{n\to\infty}\longrightarrow \<{\eps(z_1) \ldots \eps(z_k)}_\Omega\,.
\end{align*}
As such these works identify what is the incarnation of these CFT fields on a lattice. Surprisingly, it is not always straightforward to find the right local discrete field for a continuum CFT field. This is for example the case for the {\em stress tensor $T(z)$}. A complete answer to Question 1, at least for special cases of very integrable CFT such as Ising, is the subject of the exciting current program \cite{hongler2022conformal,clementCFT}. 

Now that some of these fields have  representations on a discrete lattice, let us now turn to the second natural probabilistic question one may ask about these {\em fields}. 
\begin{question}\label{q.2}
In the scaling limit, do these {\em primary  fields} of CFT  correspond to random Schwartz distributions on the plane (or on the domain $\Omega$) ? 
\end{question}
To appreciate this question, let us rephrase it by drawing an analogy with the classical {\em moment problem:} imagine $X_n$ is some concrete random variable with values on $\frac 1 n \Z$ and suppose there exists real numbers $\{m_k\}_{k\geq 1}$ such that 
\begin{align*}\label{}
\Eb{X_n^k}  \underset{n\to\infty}\longrightarrow  m_k\,.
\end{align*}
A CFT theorist would look for some constraints satisfied by the sequence $\{m_k\}$ and under some assumptions on $X_n$ may eventually compute all of these. At the end of the day, the CFT theorist would probably not (need to) wonder whether there exists a limiting random variable $X$ compatible with these moments, i.e. such that $\Eb{X^k}=m_k$ for all $k\geq 1$. This is exactly what the mathematician would try to answer via the classical moment problem and this is what Question 2. is asking in the setting of fields. 

\ni
{\em Question 2 rephrased.}  For a given {\em field} $\phi$ whose $k$-point correlations functions 
\begin{align*}\label{}
z_1,\ldots,z_k \mapsto f_{k}(z_1,\ldots,z_k) = \<{\phi(z_1) \ldots \phi(z_k)}_\Omega
\end{align*}
have been identified via CFT, does there exist a random Schwartz distribution $\Phi$ on $\Omega$ compatible with the family of $k$-point correlation functions $\{ f_k \}_{k\geq 1}$ ? I.e. such that for any smooth test functions $g_1,\ldots, g_k$, 
\begin{align*}\label{}
\Eb{\<{\Phi, g_1} \ldots \<{\Phi, g_k}}  = \iint f_k(z_1,\ldots, z_k) \prod_{i=1}^k g_k(z_i)  dz_i  \, ?
\end{align*}
This is a very natural extension of the {\em moment problem} to the setting of random distributions instead of random variables. 

\smallskip

In certain cases of fields, it is not difficult to infer that only correlations functions make sense and that no compatible random Schwartz distribution may co-exist. For example, 
If $\phi$ is a GFF on $\Omega \subsetneq \C$, then if $\gamma > \gamma_c=2$, the {\em field} $e^{\gamma \phi}$ will exist in the sense of correlation functions but will not be realised as a random distribution (a random measure here). 
See for example the discussion in \cite{kang2011gaussian}.

\smallskip

Going back to the CFT of the critical $2d$ Ising, one may wonder whether the two primary fields $\sigma$ and $\eps$ correspond to random Schwartz distributions in the plane (or in a domain
$\Omega$). The case of the {\em magnetic field} $\sigma$ has been settled in \cite{camia2015planar}. The case of the {\em energy field} $\eps$ on the other hand has been open for a long time.  Indeed the singular  behaviour of its two point function 
\begin{align*}\label{}
\<{\eps(z) \eps(w)} \sim_{z\to w} \frac a {|z-w|^2}
\end{align*}
for some constant $a>0$ makes it difficult to build such an object. Yet, one cannot exclude the possibility of some suitable renormalisation on the diagonal so that a non-trivial energy distribution would still exist.  
Let us also mention another very promising attempt which has been tried for a while in order to define such an energy distribution: the {\em conformal loop ensemble} $\CLE_3$ is known to be the scaling limit of the $+/-$ contours of the Ising model (\cite{benoist2019scaling}). In this sense it captures the scaling limit of the low-temperature representation of the spin-field and should in this respect contain all the relevant information for an energy field of Ising. Yet, no one found a way to produce the desired random Schwartz distribution out of a $\CLE_3$.

The main motivation behind this work is to provide a partially negative answer to this question of existence of an {\em energy distribution field}  for Ising (N.B. interestingly,  in Section \ref{s.potts},  we will give arguments strongly suggesting the existence of such an energy field for 3-Potts and 4-Potts). We shall argue by contradiction as follows. Let $\sigma_n$ be an instance of a $2d$ critical Ising model on $\frac 1 n \Z^2 \cap \Omega$. Note that the discrete energy field $\eps_n=\eps_n(\sigma_n):= (\sigma_n(x)\sigma_n(x+\tfrac 1 n) - \frac{\sqrt{2}} 2)_{x\in \frac 1 n \Z^2 \cap \Omega}$  is measurable w.r.t $\sigma_n$ and the opposite is also true (modulo a global spin flip). Some care is needed when taking a scaling limit especially in the case of the critical Ising model: indeed in the discrete, all fields of interest are measurable w.r.t $\sigma_n$ but in the continuum limit this may no longer be the case! A deep example of this kind is the decoupling of the $\CLE_6$ and the $2d$ white noise. They are both induced in the discrete by critical percolation but they become independent of each of other in the scaling limit due to {\em noise sensitivity}. This goes back to the seminal work \cite{benjamini1999noise} (see also \cite{garban2012noise}).

Having in mind the possible decouplings which arise for critical percolation, let us view the critical Ising percolations $\sigma_n = \{\sigma_n(x)\}_{x\in \frac 1 n \Z^2}$ through the lenses of three different topological spaces:
\begin{align*}\label{}
(\sigma_n, \sigma_n,\eps_n) \in \mathrm{CLE}_{space} \times H^{-s_1} \times H^{-s_2}
\end{align*}
where $\CLE_{space}$ is some suitable space to keep track of $+/-$ loops generated by the spin configuration $\sigma_n$ (the so-called low temperature expansion) and $H^{-s_1},H^{-s_2}$ are Sobolev spaces of sufficiently negative index.  As far as the first lens is concerned, it is known that $\sigma_n \in \CLE_{space}$ converges in law to a nested-$\CLE_3$ collection of conformally invariant loops as proved in \cite{benoist2019scaling}. 
Using a suitable renormalisation of the spin field $\sigma_n$, namely 
\begin{align*}\label{}
S_n(\sigma_n):= n^{-15/8} \sum_{x\in \frac 1 n \Z^2 } \sigma_n(x) \delta_x\,,
\end{align*}
it is also known that $S_n(\sigma_n)$ converges in law to a random distribution $\sigma$ in a sufficiently negative Sobolev space. See \cite{camia2015planar,furlan2017tightness}. 
It is then natural to ask whether there is a suitable renormalization map $E_n: \eps_n \to E_n(\eps_n)\in H^{-s_2}$ so that $E_n(\eps_n)$ converges to a limiting non-trivial field (N.B it can be checked that the first natural guess for $E_n$ leads to $2d$ white noise; this is by no means considered an interesting limit and the goal is somehow to deconvolve this white noise coming from the short distance singularity $1/|z-w|^2$).  
 
Suppose now that the first and third lenses jointly converge to limit $(\Gamma, \eps) \in \CLE_{space} \times H^{-s_2}$ in such a way that $\eps =\eps(\Gamma)$ and the random field $\eps$ has exponential moments, i.e. 
\begin{align}\tag{Hexp}\label{e.Hexp}
\forall \lambda\in \R, \;\;\;  \EFK{}{}{\exp(-\lambda \<{\eps, 1_\Omega}} <\infty
\end{align}
where we assume $\Omega$ to be a bounded domain of the plane.  Then under the assumption~\eqref{e.Hexp}, one would be able to build a new measure on spin configurations $\Gamma\in \CLE_{space}$ as follows:
\begin{align*}\label{}
d \P_\lambda(\Gamma) & \propto \exp\big(-\lambda \<{\eps(\Gamma), 1_\Omega}\big) \P(d\Gamma) \\
& \propto \exp\Big(-\lambda  \int_\Omega [\eps(\Gamma)](x) dx \Big) \P(d\Gamma)
\end{align*}
If the field $\eps=\eps(\Gamma)$ captures in a suitable way the {\em energy field} of Ising, then the measure $\P_\lambda$ would correspond to the {\em near-critical} (also called {\em massive}) limit of the Ising model around $\beta_c$. 
Assumption~\eqref{e.Hexp} would furthermore show that this near-critical limit $\P_\lambda$ is \textbf{absolutely continuous} with respect to the critical Ising scaling limit $\P_0$. Now comes one of the main contributions of this paper: in Section \ref{s.Ising}, we will prove that any subsequential scaling limit of near-critical Ising model is in fact singular with respect to the scaling limit of critical Ising model
\footnote{This is established under a slightly different topology based on $\CLE_{16/3}$ FK-clusters rather than $\CLE_3$ clusters.}!
See Theorem \ref{th.mainI}. 
\smallskip

As such, this singularity result gives to, our knowledge, the best rigorous indication that the energy field of critical Ising CFT \textbf{does not exist as a random Schwartz distribution} (N.B. this is under the assumption that the latter field satisfies the assumption~\eqref{e.Hexp}).

The two other singularity results proved in this paper may be motivated in the same way:
\bi
\item Let $\varphi$ be a Gaussian Free Field in a bounded domain $\Omega \subset \C$ with, say, Dirichlet boundary conditions. Can one make sense (after a suitable renormalisation) of the random distribution $e^{i \beta \varphi}$ up the Berezinskii-Kosterltitz-Thouless critical point $\beta_{BKT}$ ?  It is easy to check that it exists thanks to Wick ordering up to $\beta_{L^2} < \beta_{BKT}$, see \cite{lacoin2015complex,junnila2020imaginary}. 
Our singularity result about the hierarchical Sine-Gordon measure, Theorem \ref{th.mainSG}, implies that there does no exist such distributions (modulo the existence of an exponential moment such as~\eqref{e.Hexp}). In particular when $\beta_{L^2} \leq \beta < \beta_{BKT}$ the field $e^{i \beta \varphi}$ only exists in the sense of $k$-point correlation functions. 
\item Let $\varphi$ be a GFF on $\R^d$. Can one make sense (after a suitable renormalisation) of the distribution $\varphi^4$ ? In $d=2$ and up to $d=8/3$ (in the hierarchical sense) this may be done thanks to Wick ordering. The main result of Section \ref{s.Phi} then implies the non-existence of such a distribution above $d=8/3$. 
\ei

\subsection{Short introduction to the main objects.}\label{ss.intro}

Let us now briefly describe  the three main models considered in this work, i.e. 
\bnum
\item The critical and near-critical Ising model scaling limits (Section \ref{s.Ising})
\item The hierarchical $2d$ Sine-Gordon model (Section \ref{s.SG}) 
\item The hierarchical $3d$ $\Phi^4$ model (Section \ref{s.Phi}) 
\enum

\ni
{\em Ising model.} Consider the Ising model on the small mesh lattice $\frac 1 n \Z^2$ with $n\in \N^*$. This means spins are sampled in $\Lambda \subset \frac 1 n \Z^2$ according to $\Pb{\sigma}\propto \exp(\beta \sum_{i\sim j} \sigma_i \sigma_j)$ for any spin configuration $\sigma \in \{\pm 1\}^\Lambda$ where $\beta$ is the inverse temperature. One can consider either the infinite volume limit $\Lambda \nearrow \frac 1 n \Z^2$ or stick to a finite window $\Lambda = \Lambda^{(n)}:= \frac 1 n \Z^2 \cap \Omega$ for some bounded domain $\Omega \subset \C$. 
If $\beta$ is set to be the critical inverse temperature $\beta_c=\frac 1 2 \log(1+\sqrt{2})$ then it is now well known that  as $n\to \infty$, a beautiful scaling limit emerges (\cite{smirnov2010conformal,chelkak2012universality}). Several natural topologies may be considered to capture the scaling limit, see the discussion at the beginning of Section \ref{s.Ising}. 
If one suitably rescales the Ising model near its critical inverse temperature $\beta_c$, an interesting scaling limit is expected to arise, called the {\em massive} or the {\em near-critical} limit of the Ising model in the $\beta$-direction (N.B. there is also a near-critical Ising model in the $h$-direction, \cite{camia2015planar,camia2016planar}). The appropriate tuning for $\beta\to \beta_c$ is proved in \cite{duminil2022planar} to be as follows
\begin{align*}\label{}
\beta = \beta_c + \frac \lambda n 
\end{align*}
for any $\lambda\in \R \setminus \{0\}$. (N.B. This scaling was also found in \cite{duminil2014near} up to log corrections). 
In Section \ref{s.Ising}, we consider sub-sequential scaling limits of near-critical lsing and we prove that for a suitable topology the limiting measures are singular w.r.t. the critical scaling limit (Theorem \ref{th.mainI}).

\medskip

\ni
{\em Hierarchical Gaussian Free Field.} The mathematical analysis of hierarchical versions of classical spin systems is  often more accessible than their non-hierarchical counter-part (see for example \cite{dyson1969existence}). 
On the side of the Renormalisation Group, hierarchical models are particularly convenient since they correspond to a dynamical system operating on one-variable functions. See \cite{gawkedzki1983non} or the recent book \cite{bauerschmidt2019introduction} or Sections \ref{s.SG} and \ref{s.Phi} where this will play a key role. The building brick of {\em hierarchical RG} is the {\em hierarchical GFF} which replaces the classical Gaussian Free Field (GFF) in a hierarchical setting. It is defined through a recursive procedure in such a way that it mimics the covariance structure of the GFF on $\R^d$. The price to pay will be that the Euclidean metric will be traded for a hierarchical ultrametric. 
Let us briefly introduce the hierarchical GFF in its two classical different forms: the \textbf{ultra-violet (UV)} presentation and the \textbf{infra-red (IR)} form. They are related to each other by a simple scaling rule (see~\eqref{e.SR} below). We present both versions as the IR form is more commonly used to run the RG flow while the UV form is naturally embedded in the continuum space $\R^d$. See for example \cite{bauerschmidt2019introduction,hutchcroft2024critical} for  recent references on hierarchical systems in the infrared (IR) setting.

Let us first set some notations which will be used for both IR and UV forms. 
Let us define
\footnote{In the literature, in the IR case, it is more common to introduce instead $\Lambda_N:=\{x\in\Z^d:-\hf L^N<x_i<\hf L^N, i=1,\dots, d\}$ so that it eventually covers the entire $\Z^d$. Note that up to translation, this is the same setting as ours.} 
 for $L \geq 2$
\begin{align}\label{e.Lambda}
& \Lambda_N:=\{x\in\Z^d: 0 \leq x_i  < L^N, i=1,\dots, d\}  \nn \\
& \bar \Lambda_N : = \frac 1 {L^N} \Lambda_N \subset [0,1]^d
\end{align}
\bi
\item[a)] {\em Infra-red (IR) hierarchical GFF $\phi^N$ in $d$ dimensions.} 
The hierarchical GFF $\phi^N(x)$, $x\in\Lambda_N$ is given by
\begin{equation}\label{e.phiN}
\phi^N(x)=\sum_{n=0}^{N}L^{-\frac{d-2}{2}n}z^{(N-n)}_{[\frac{x}{L^n}]}
 \end {equation}
 where $[\cdot]$ denotes integer part. Here $z^{(n)}_x$, $x\in\Lambda_{n}$, $1 \leq n \leq N$ are independent Gaussian processes with covariance
 \begin{align}\label{e.COV}
\begin{cases}
\Eb{z^{(n)}_xz^{(n)}_y}=\delta_{xy} \;\;\; \text{ in Section \ref{s.SG} }\\
\Eb{z^{(n)}_xz^{(n)}_y}=\delta_{xy}-L^{-d}\delta_{[\frac{x}{L}],[\frac{y}{L}]} \;\;\;  \text{ in Section \ref{s.Phi}}
\end{cases}
 \end {align}
and $z^{(0)}_0$ is unit Gaussian. 
(N.B. The second covariance structure of $z^{(n)}, n\geq 1 $ will  ensure in Section \ref{s.Phi} that $\sum_{x\in \text{ one $L\times L$ box}} z^{(n)}_x =0$. Indeed $\Var{\sum z_x}) = L^d(1-L^{-d}) + L^d(L^d-1)*(-1)*L^{-d}=0$).
\item[b)] {\em Ultra-violet (UV) hierarchical GFF $\varphi^N$ in $d$ dimensions.} 
It is defined on the continuum cube $[0,1)^d$ as follows: for each $\bar x \in [0,1)^d$ 
\begin{align}
\varphi^N(\bar x) & =\sum_{n=0}^{N}L^{\frac{d-2}{2}(N-n)}z^{(N-n)}_{[L^{N-n} \bar x ]}\label{e.varphiN} \\
& = L^{\frac {d-2} 2 N} \phi^N([ L^N \bar x ] ) \label{e.SR}
\end{align}
The second equality gives us the simple scaling which relates the UV field $\varphi^N(\bar x)$ with the IR field $\phi^N(x)$.
(We used the fact that $[ L^{-n} [L^N \bar x] ] = [L^{N-n} \bar x ]$). In the rest of this text, we will use the variable $x$ in both IR and UV settings when the context will be clear. Also, when comparing the  $UV$ hierarchical field with the GFF on $\R^d$, we will add a subscript ``$\hr$'' to $\varphi^N_\hr$ to make the comparison clearer. 
\ei
%
When $d=2$, the hierarchical field $\varphi^N$ has the following simple expression on $[0,1)^2$
\begin{align*}\label{}
\varphi^N(x) := \sum_{n=0}^{N}  z^{(n)}_{[ L^{n}  x ]}\,.
\end{align*}
It is easy to check that in any Sobolev space $H^{-\eps}$, the field  $\varphi^N_\hr(x) = \varphi^N(x)$ converges in law as $N\to \infty$ to a limiting \textbf{hierarchical GFF} on $[0,1]^2$ $\varphi_\hr$ whose covariance structure is given by 
\begin{align*}\label{}
\Cov{\varphi_\hr(x),\varphi_\hr(y)} = \log_L \dist_\hr(x,y)^{-1}
\end{align*}
where $\dist_\hr$ is the hierarchical ultrametric on $(0,1)^2$ defined by 
\begin{align*}\label{}
d_\hr(x,y):= L^{-h(x,y)}\,,
\end{align*}
and where $h(x,y)$ is the largest $k\geq 0$ so that both $x$ and $y$ belong to the same $L$-adic square of side-length $L^{-k}$. As such 
$\varphi_\hr$ is a good tree-like approximation of the isotropic $2d$ GFF. One nice feature of this field is that all $\{ \varphi^N_\hr\}_{N\geq 1}$ are naturally coupled with each other (they share the same Gaussian variables on the first layers). 

If instead $d=3$, we now have $\varphi^N_{d=3,\hr}\to \varphi_{d=3,\hr}$ in any $H^{-1/2-\eps}$ and 
\begin{align}\label{e.3dgffH}
\Cov{\varphi_{d=3,\hr}(x),\varphi_{d=3,\hr}(y)} = \frac{L^{h(x,y)+1}-1}{L-1}\asymp \frac 1 {\dist_\hr(x,y)}\,.
\end{align}


\medskip
\ni
{\em Hierarchical QFT fields.} Let us now shortly introduce the hierarchical Sine-Gordon and $\phi_3^4$ models which will be the focus of Sections \ref{s.SG} and \ref{s.Phi}.

Recall the $2d$ Sine-Gordon model on the two-dimensional torus $\T^2$ is informally defined as follows for $\beta\in[0,8\pi)$:
\begin{align*}\label{}
\mu_{\beta,\mu}^{SG}(d\varphi) ``\propto \exp\Big(- \mu \int_{\T^2} \Wick{\cos(\sqrt{\beta} \varphi(x))} dx \Big) \mu_{GFF}(d\varphi) "\,,  
\end{align*}
where $\Wick{\cos(\sqrt{\beta} \varphi(x))}=\lim_{\eps\to 0} e^{+\frac \beta 2 \Var{\varphi_\eps(x)}}\cos(\sqrt{\beta} \varphi_\eps(x))$ stands for the so-called {\em Wick ordering}.  
When $\beta<4\pi$, the Sine-Gordon field is easily seen to be absolutely continuous w.r.t the GFF, but when $\beta\in[4\pi,8\pi)$ its construction is more involved. See \cite{hairer2016dynamical,chandra2018dynamical,gubinelli2024fbsde} as well as \cite{lacoin2023probabilistic} in the 1d setting.  It has been shown in \cite{gubinelli2024fbsde} that the SG field in $2d$ is singular with respect to the GFF in the regime $\beta\in[4\pi, 6\pi)$. 
In Section \ref{s.SG}, we will define the hierarchical version of the Sine-Gordon field and we will prove that it is singular w.r.t $\varphi_\hr$ for all $\beta\in[\beta_{L^2},\beta_{BKT})$ (where $\beta_{L^2}$ plays the same role as $4\pi$ in $\R^2$ and $\beta_{BKT}$ corresponds to $8\pi$ in $\R^2$). The hierarchical Sine-Gordon field is informally defined as follows
\begin{align}\label{e.SGF}
\mu_{\hr,\beta,\mu}^{SG}(d\varphi)  ``\propto \exp\Big(- \mu \int_{[0,1]^2} \Wick{\cos(\sqrt{\beta} \varphi(x))} dx \Big) \mu_{\hr}(d\varphi) "\,.  
\end{align}

\medskip

In the same way, inspired by the construction of the $\phi^4_3$ field on $\R^3$ (see \cite{glimm1968boson,glimm1973positivity,feldman1974lambdaphi,hairer2014theory,gubinelli2015paracontrolled,kupiainen2016renormalization,barashkov2021varphi}), we shall define and analyse its hierarchical version in Section \ref{s.Phi} which may be informally written as follows:
\begin{align*}\label{}
\mu^{\phi^4_3}_{\hr,\lambda}(d\varphi)  ``\propto \exp\Big(-  \int_{[0,1]^3} \big[\lambda\,{\varphi(x)^4} + \infty \times \varphi(x)^2\big] dx \Big) \mu_{\hr}(d\varphi) "\,,  
\end{align*}
where $\infty$ stands for the  $a\lambda\epsilon^{-1}+b\lambda^2\log\frac 1 \eps$ counter term which needs to be added 
in order to produce a non-trivial QFT field. 

\medskip
\ni
{\em Hierarchical RG flow and effective potentials.}
The later will be a key feature of Sections \ref{s.SG} and \ref{s.Phi}. Suppose we wish to define the probability measure informally defined in~\eqref{e.SGF}.  As one would do in the Euclidean case, we look for a regularisation scheme $\varphi \to \varphi_\eps$. In the hierarchical setting, it is particularly convenient as we may simply consider $\varphi_\hr \to \varphi^N_\hr$ (which sets to zero all gaussians variables at scales smaller than $L^{-N}$). In the Sine-Gordon case (equation~\eqref{e.SGF}), let us choose some large integer $N$ and let us consider the potential 
\begin{align*}\label{}
v^{(N)}_N(\varphi) := a_N \cos( \sqrt{\beta} \, \varphi)\,,
\end{align*}
where the {\em coupling constant} $a_N$ needs to be well chosen for the later RG flow to be well behaved. This potential allows us to consider the following probability measure on fields:
\begin{align}\label{e.SGF2}
\mu^{(N)}(d\varphi) := \frac 1 {Z^{(N)}_{N}}  \exp\Big(- \sum_{x\in \bar \Lambda_N } v_N^{(N)} (\varphi(x))  \Big) \mu_{\hr}(d\varphi)\,,  
\end{align}
where $\varphi^N(x)$ is defined as in~\eqref{e.SR},  and $x$ runs over the points in $\bar \Lambda_N$ defined in~\eqref{e.Lambda} 
\begin{remark}\label{}
Given the informal definition~\eqref{e.SGF}, it would  have been probably more natural to consider instead the definition 
\begin{align*}\label{}
\tilde \mu^{(N)}(d\varphi) := \frac 1 {Z^{(N)}_{N}}  \exp\Big(- \int_{[0,1]^2} v_N^{(N)} (\varphi^N(x))  dx \Big) \mu_{\hr}(d\varphi)\,. 
\end{align*}
It will turn out to be more convenient to set up our RG flow with the former definition~\eqref{e.SGF2} (in any case the only difference between the two involves an additional  $L^{dN}$ factor in the coupling constant $a_N$).   
\end{remark}

The hope is then to show that if $\{a_N\}_{N\geq 1}$ is well tuned, then 
\begin{align*}\label{}
\mu^{(N)} \underset{N\to \infty}\longrightarrow \mu^{SG}_{\hr, \beta}
\end{align*}
The suitable choice of coupling constants together with a proof of such a convergence will go through the so-called {\em hierarchical RG flow}. It may be introduced as follows: the first iteration of the RG flow will operate from scale $L^{-N}$ up to scale $L^{-N+1}$ and will seek for a potential $v_{N-1}^{(N)}$ such that if $f_{N-1}$ is a test function which is constant on $L$-adic squares of side-length $L^{-N+1}$, then the following identity holds
\begin{align*}\label{}
\mu^{(N)}\big[ \<{f_{N-1},  \varphi^{N-1}} \big] & =  \frac 
1 {Z^{(N)}_{N-1}}  \int  \<{f_{N-1}, \varphi^{N-1}}\exp\Big(-  \sum_{x\in \bar \Lambda_N} v_N^{(N)} (\varphi^N(x))  \Big) \mu_{\hr}(d\varphi)\,.  
\end{align*}
It is an interesting computation (by integrating out the Gaussian variables at scale $L^{-N}$, i.e $z^{(N)}_{x}$) to check that it is indeed achieved by the following renormalized potential
\begin{align*}\label{}
v^{(N)}_{N-1}(\varphi)& :=  - \log\Big(\Eb{e^{-v^{(N)}_N(\varphi+z)}}^{L^2}\Big)\,,
\end{align*}
where the expectation $\E$ is w.r.t to a normal Gaussian variable $z$. This brings us to introduce the following RG flow $\calR$ acting on functions of one real variable: 
\begin{align}\label{e.RG0}
[\calR v](\varphi) := - L^2 \log \Big( \int_\R dx \frac{e^{-x^2}}{\sqrt{2\pi}} e^{-v(\varphi+x)} \Big)\,.
\end{align}
One may then iterate this hierarchical RG flow and obtain for any intermediate $1\leq n \leq N$ the {\em effective potential} $v_n^{(N)}$ describing the fluctuations at scale $L^{-n}$:
\begin{align*}\label{}
v_n^{(N)} := \calR^{N-n} v_N^{(N)} 
\end{align*}
Our definition of the Sine-Gordon hierarchical field in Definition \ref{d.SG} will include the fact that if the coupling constants $\{a_N\}_N$ are tuned the right way then one obtains for each $n\geq 1$ non-trivial asymptotic effective potentials 
\begin{align*}\label{}
v_n^{\infty} := \lim_{N\to \infty} v_n^{(N)}\,.
\end{align*}
As for the $\phi^4_3$ field, a very similar hierarchical RG flow will be used in Section \ref{s.Phi} in order to construct and analyse the hierarchical $\phi^4_3$ field  out of the $3d$ hierarchical GFF from~\eqref{e.3dgffH}.


\subsection{Main ideas of proofs.}

\begin{figure}[!htp]
\begin{center}
\includegraphics[width=\textwidth]{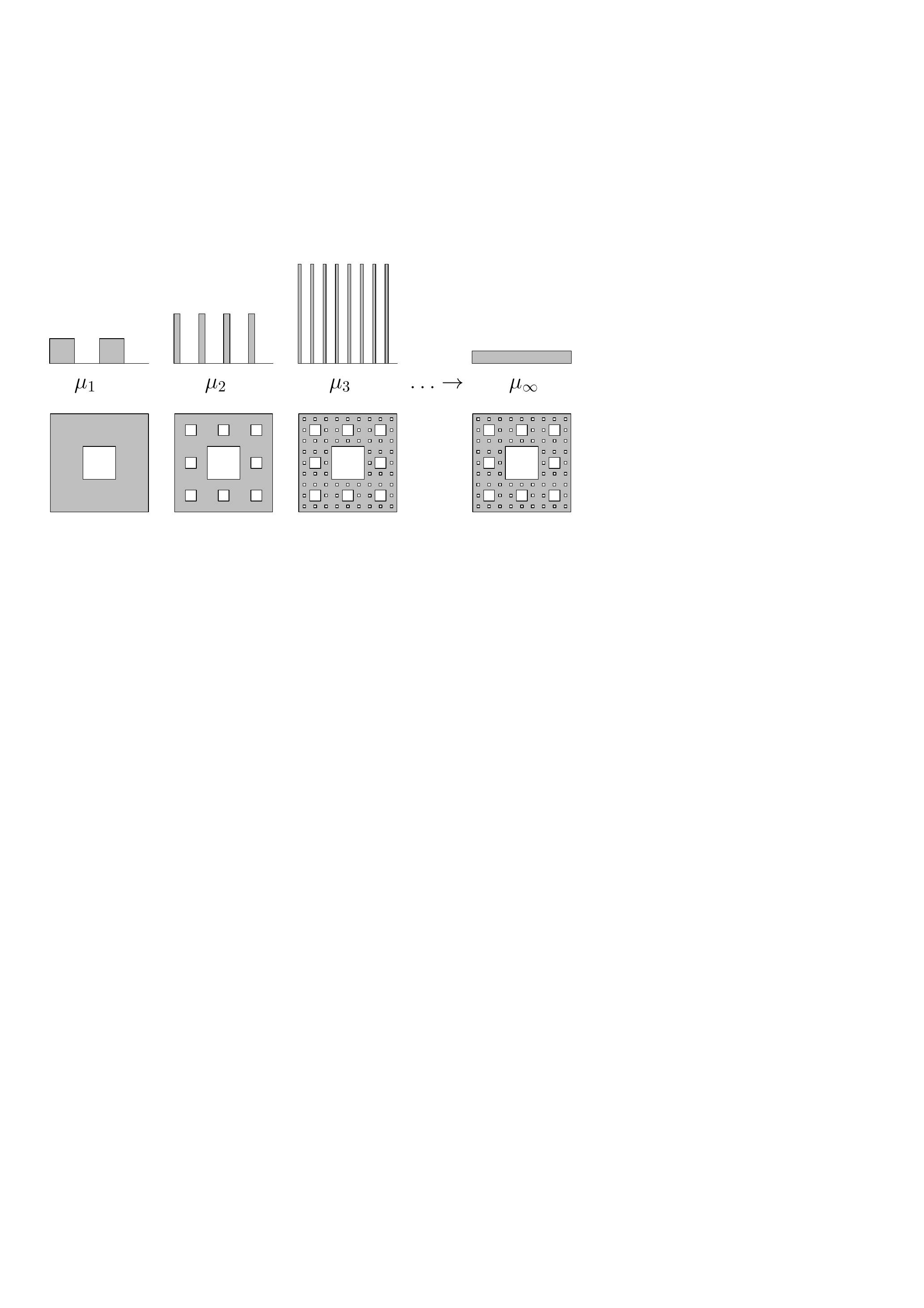}
\end{center}
\caption{Illustration in a simpler setting of our main difficulty to identify singularity. The first line represents the sequence of measure $\{\mu_n\}$ on the unit interval $[0,1]$ with Radon-Nikodim derivative $d \mu_n(x) = \big(\sum_{i=0}^{2^n-1} 2^{2n} 1_{[i 2^{-n}, i 2^{-n} + 2^{-3n}]} \big)(x) dx $. These measures are more and more {\em singular} as $n\to \infty$ with respect to Lebesgue measure. Yet, their limiting measure is nothing but the Lebesgue measure on $[0,1]$.  We thus need to prove that the three QFT fields we are interested in  behave more like the second line with fractal behaviour. Namely the singularity spotted on an $\eps$ regularisation remains after taking the {\em UV limit} in the RG flow. }\label{f.image1}
\end{figure}

Let us first highlight the main ideas and difficulties of the proofs in Sections \ref{s.Ising},\ref{s.SG} and \ref{s.Phi} by considering the following more classical situation of a singular measure $\nu$ (w.r.t to a second measure $\mu$, say) for which we have a sequence of approximations measures $\{\nu_n\}_n$ so that $\nu_n \to \nu$.  
If the measures $\nu_n$ have a simpler structure than the limiting measure $\nu$, it is then tempting to prove that $\nu \perp \mu$ by checking that $\nu_n$ is ``more and more singular'' w.r.t $\mu$ for large $n$ and then pass to the limit. 

This is of course far from sufficient as it is illustrated in the top example of Figure \ref{f.image1}. As we shall explain below, such degenerate cases may happen in the context of QFT fields. In the simpler case of measures on $\R^d$ (as opposed to measures on $\calS(\R^d)$ or $H^{-a}(\R^d)$), one may still prove $\nu \perp \mu$ using $\nu_n \to \nu$ if the singular behaviour of $\nu_n$ is visible at all fixed  ``mesoscopic scales'' uniformly as $n\to \infty$. This is typically what happens with measures supported on fractal sets as pictured in Figure \ref{f.image1} and this is reminiscent of what we will look for in the setting of QFT fields.

\smallskip
\ni
{\em Singularity for near-critical Ising model. (Section \ref{s.Ising}).}
 
Let us fix the near-critical parameter to be $\lambda \neq 0$ and let us consider a large integer $n \gg 1$. Let $\Lambda_n$ be the finite graph $\frac 1 n \Z^2 \cap [-1,1]^2$. 
Before taking the scaling limit, we may compare the critical and near-critical measures defined on the same configuration space $\{-1,1\}^{\Lambda_n}$ by 
\begin{align*}\label{}
\begin{cases}
& \FK{\Lambda_n,\beta_c}{}{\sigma} \propto \exp(\beta_c \sum_{i\sim j} \sigma_i \sigma_j)    \\
& \text{   and } \\
& \FK{\Lambda_n,\beta_c,\lambda}{\mathrm{n.c}}{\sigma} \propto \exp((\beta_c + \frac \lambda n) \sum_{i\sim j} \sigma_i \sigma_j) \propto \FK{\Lambda_n,\beta_c}{}{\sigma}\exp(\frac \lambda n \sum_{i\sim j} \sigma_i \sigma_j)
\end{cases}
\end{align*}

In Section \ref{ss.discrete}, we will prove that $\P_{\Lambda_n,\beta_c}$ and $\P_{\lambda_n,\beta_c,\lambda}^{\mathrm{n.c}}$ are more and more singular as $n\to \infty$ in the sense that one can find events $A_n\subset \{-1,1\}^{\Lambda_n}$ such that 
\begin{align*}\label{}
 \FK{\Lambda_n,\beta_c}{}{A_n} \to 1 \text{    while    }  \FK{\Lambda_n,\beta_c,\lambda}{\mathrm{n.c}}{A_n} \to 0 \,. 
\end{align*}
The main idea to achieve this is to prove that the random variable representing the energy, i.e. $E_n(\sigma):=  \sum_{i\sim j} \sigma_i \sigma_j$ has fluctuations much higher than $n$ when $n$ is large and when $\sigma\sim \P_{\Lambda_n, \beta_c}$. This implies in particular that the Radon-Nykodim derivative $\frac {d \P_{\Lambda_n,\beta_c,\lambda}^{\mathrm{n.c}}} {d\P_{\Lambda_n, \beta_c}}$ is more and more singular on the space $\{-1,1\}^{\Lambda_n}$.  We then face two difficulties:
\bi
\item[a)] First, there is no obvious limit for the state space $\{-1,1\}^{\Lambda_n}$ as $n\to \infty$. A suitable topology needs to be used in order to take the scaling limit.
\item[b)] One then needs  to check that the singularity still prevails in the scaling limit. This is far from obvious as an analogous situation to the example illustrated in Figure \ref{f.image1} can be constructed in the setting of the Ising model as we now explain.  
\ei
\begin{example}\label{ex.1}
Imagine we consider instead a near-critical Ising measure whose temperature shift is 
\begin{align}\label{e.CE}
\tilde \beta = \beta_c + \frac {\lambda} {(\log n)^{1/10} n} \text{   instead of  } \beta = \beta_c +  \frac {\lambda} {n}\,.
\end{align}
We claim that the analysis of Section \ref{ss.discrete} still applies and proves that $\P_{\Lambda_n, \beta_c +\frac {\lambda} {(\log n)^{1/10} n} }$ and $\P_{\Lambda_n, \beta_c}$ are asymptotically singular. Yet, interestingly, it can be checked that in the scaling limit (under the topology considered in section \ref{s.Ising}), both limits are in fact equal! We thus found the same type of counter-example as the one illustrated in Figure \ref{f.image1} in the less classical setting of the critical Ising model. 
\end{example}

This example shows that step $b)$ requires some care. In order to detect a singular behaviour in the scaling limit, we will construct in Section \ref{s.Ising} an event based on the behaviour of FK-clusters at small mesoscopic scales (which in a sense are uniform as the mesh $1/n \to 0$).

\medskip
\ni
{\em Singularity for hierarchical Sine-Gordon and $\phi^4_3$ (Sections \ref{s.SG} ans \ref{s.Phi}).}

Let us explain the main idea in the case of the hierarchical Sine-Gordon field. As explained above, the measure will be well approximated by the following one for large $N$:
\begin{align}\label{e.SGF3}
\mu^{(N)}(d\varphi) := \frac 1 {Z^{(N)}_{N}}  \exp\Big(- \sum_{x\in \bar \Lambda_N} v_N^{(N)} (\varphi^N(x))  \Big) \mu_{\hr}(d\varphi)\,,  
\end{align}
with $v_N^{(N)}(\varphi) = a_N \cos(\sqrt{\beta} \varphi)$ and $a_N$ tuned the right way. 
It will be rather straightforward\footnote{this step will be  much easier than in the Ising case} to check that this measure is asymptotically (as $N\to \infty$) more and more singular with respect to the Gaussian measure $\mu_\hr$ when $\beta \geq \beta_{L_2}$. This will follow from the fact that if $\varphi^N \sim \mu_{\hr}$, then the energy $ \sum_{x\in \bar \Lambda_n} v_N^{(N)} (\varphi^N(x)) $ has a positive probability to take very large negative values as $N\to \infty$.  Note that at this stage we only need $\beta \geq \beta_{L_2}$ and the other constraint $\beta<\beta_{BKT}$ is not visible yet. This latter constraint only appears when taking the scaling limit. 

Once again the same situation as in Figure \ref{f.image1} may arise in this context. Despite this singularity as small scales, it could well be that as $N\to \infty$, we only reproduce the Gaussian field $\mu_\hr$ or something absolutely continuous. In fact it is not hard to build such an example:
\begin{example}\label{ex.2}
In the setting of the Sine-Gordon field, we claim that if we choose the coupling constants $\tilde a_N$ to be slightly too small, namely $\tilde a_N:= N^{-1/10} a_N$  in Section \ref{s.SG}, then for any $\beta \geq \beta_{L_2}$ we still have enough wide energy fluctuations to ensure that $\tilde \mu^{(N)}$ is more and more singular w.r.t $\mu_\hr$. Yet, if $\beta\in [\beta_{L^2}, \beta_{BKT})$ the proof in Section \ref{s.SG} also shows that the effective potentials $\tilde v_n^{(N)}$ are converging to zero so that no singularity survives in the scaling limit $N\to \infty$!
\end{example}

This means that in the QFT setting, we may build potentials $v_N^{(N)}$ which create very singular behaviour at small scales, and yet for which limiting effective potentials are better behaved. Similarly as in for the Ising model, the key aspect of the proof is then to keep track of the singularity behaviour at ``mesoscopic scales'' uniformly as the mesh $L^{-N}\to 0$.  This will boil down to obtaining a good control on the limiting effective potentials for each fixed $n\geq 1$: 
\begin{align*}\label{}
v_n^\infty  := \lim_{N\to \infty}  v_n^{(N)} = \lim_{N\to \infty} \calR^{N-n} v_N^{(N)} \,.
\end{align*}
This is precisely where the second condition $\beta < \beta_{BKT}$ is important. Indeed, above this threshold, the hierarchical RG flow $\calR$ stops being super-renormalizable in the UV limit. The key estimates which allow us to spot the singularity are found in Lemma \ref{l.LLN} and Lemma \ref{l.liminf}.


%
%

$ $ 

\smallskip
\ni
\textbf{Acknowledgments.}
The first author wishes to warmly thank Clément Hongler for illuminating discussions on the interplay between singularity and non-existence of fields as random distributions. These discussions in the case of the Ising model were the motivations to look into the analogous questions for the $\Phi^4_3$ and Sine-Gordon fields.  The authors wish to thank Martin Hairer for several important discussions at the beginning of this project as well as Hugo Duminil-Copin and Ioan Manolescu for adding specific key estimates in their key work \cite{duminil2022planar} which made the writing of this paper much easier. Finally we also thank Abdelmalek Abdesselam, Paul Cahen, Alessandro Giuliani, Massimiliano Gubinelli, Rémy Mahfouf  and Slava Rychkov for fruitful discussions. 

C.G. acknowledges support from the Institut Universitaire de France (IUF), the ERC grant VORTEX 101043450 and the French ANR grant ANR-21-CE40-0003. A.K. was partially supported by the Academy of Finland.

\section{Near-critical Ising model}\label{s.Ising}

\medskip

Our goal in this section is to prove that in a compact window, any (subsequential) scaling limit of near-critical Ising model (for a near-critical perturbation in the temperature direction) is singular with respect to the scaling limit of critical Ising model. Before stating our main result, let us make a few remarks:
\bnum
\item The singularity statement is more interesting on a {\em compact region} (it detects the ultra-violet singularity). It would also hold for the scaling limit in infinite volume (i.e. on $\R^2$) but this would be much easier to detect. 

For this reason, we consider the Ising model on $\Omega=[-1,1]^2 \subset \R^2$. It may be equipped with several  natural boundary conditions:
\bi
\item[a)] Plane boundary conditions (i.e. we consider the scaling limit in the full $\R^2$ and consider its restriction to the window $[-1,1]^2$). This is the main setup considered in \cite{duminil2022planar} 
\item[b)] Periodic boundary conditions
\item[c)] $+/$free boundary conditions around $[-1,1]^2$. 
\ei 

\item The near-critical scaling limit of the Ising model in the {\em magnetic direction} has already been analysed in \cite{camia2016planar,camia2020exponential} and turns out to be {\em absolutely continuous} in compact regions w.r.t to the critical model. 

\item The correct notion of near-critical limit is provided to us by \cite{duminil2022planar} (see also \cite{duminil2014near} for up to log estimates as well as \cite{park2022convergence} for 2-point and 4-point correlation functions). For any of the above boundary conditions $a)$ to $d)$, we consider the Ising model in $\frac 1 n \Z^2 \cap \Omega$ and at inverse temperature
\begin{align*}\label{}
\beta_{\lambda,n}:= \beta_c+ \frac \lambda n\,,
\end{align*}
where $\lambda\in \R$ is a fixed near-critical parameter. We wish to show that if $\lambda \neq 0$, then as $n\to \infty$,  any subsequential scaling limit is singular w.r.t to the critical ($\lambda=0$) limit. 

\item To detect such a singularity, we need to choose a suitable topology for the limiting process. In the case of the Ising model, it turns out there are several natural spaces in order to consider the scaling limit: 
\bi
\item[a)] {\em Space of Fields.} For example by choosing the Sobolev space $H^{-1/8-\eps}(\Omega)$ (see \cite{furlan2017tightness,camia2015planar}) 
\item[b)] {\em Spin clusters, $\mathrm{CLE}_3$.} 
\item[c)] {\em FK clusters, $\mathrm{CLE}_{16/3}$.}  
\ei
\enum
\ni
We will prove that singularity holds under an FK percolation scaling limit (item $c)$). 

Let us start by checking (only at the informal level)  that when the discrete mesh is still visible, the critical and near-critical measures are indeed nearly singular. As discussed in the introduction, this is by no means a guaranty that the limiting measures will indeed be singular (see Examples \ref{ex.1} and \ref{ex.2}). 

\subsection{(Near)-singularity at the discrete level.}\label{ss.discrete}

Let $\lambda\neq 0$ be fixed and let us analyse boundary conditions $c)$ with free boundary conditions, namely  $[-1,1]^2 \cap \frac 1 n \Z^2$ equipped with free boundary conditions. 
\begin{align*}\label{}
\begin{cases}
& \mu_{n}^0(\sigma):= \frac 1 {Z_n^0} \exp \left( \beta_c \sum_{i\sim j} \sigma_i \sigma_j  \right) \\
& \mu_{n}^\lambda(\sigma):= \frac 1 {Z_n^\lambda} \exp\left(  (\beta_c + \frac \lambda n) \sum_{i\sim j} \sigma_i \sigma_j  \right) 
\end{cases}
\end{align*}
To compare both measures, let us introduce $E_n^{bulk}$ to be the {\em amount} of energy inside the sub-domain
\begin{align*}\label{}
I_n:=[-\tfrac 1 2 , \tfrac 1 2] \cap \frac1 n \Z^2 \subset \Omega_n:=[-1,1]^2 \cap \frac 1 n \Z^2\,.
\end{align*}
\begin{align*}\label{}
E_n^{bulk} = E_n^{bulk}(\sigma):= \sum_{i\sim j \in I_n} \sigma_i \sigma_j
\end{align*} 
We start with the following Lemma which follows readily from the analysis in \cite{duminil2022planar}. 
\begin{lemma}\label{l.varE}
For any fixed $\lambda \in \R$ (including $\lambda=0$ which is the critical measure), 
\begin{align*}\label{}
\mathrm{Var}_{\mu_n^\lambda}\big[\frac {E_n^{bulk}(\sigma)} n \big] \asymp \log(n)\,.
\end{align*}
Furthermore, the constants involved in $\asymp$ are uniform over $\lambda \in K$ for any compact set $K\in \R$ (but do depend on the choice of the compact set $K$). 
\end{lemma}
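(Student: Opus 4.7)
The plan is to expand the variance as a double sum of two-point energy covariances and then reduce the estimate to an elementary logarithmic integral, with the only non-trivial input being the near-critical two-point energy estimates proved (uniformly in $\lambda$) in \cite{duminil2022planar}. Writing edges $e=\{x,x+\frac{1}{n}u\}$ of $I_n$ as indexed by their midpoint $x$ and direction $u\in\{e_1,e_2\}$, one has $E_n^{bulk}=\sum_{e\subset I_n}\sigma_{e_1}\sigma_{e_2}$ with $\asymp n^2$ terms, hence
$$\mathrm{Var}_{\mu_n^\lambda}\!\Big[\tfrac{E_n^{bulk}}{n}\Big]
=\frac{1}{n^2}\sum_{e,f\subset I_n}\mathrm{Cov}_{\mu_n^\lambda}\!\bigl(\sigma_{e_1}\sigma_{e_2},\,\sigma_{f_1}\sigma_{f_2}\bigr).$$
The key analytical input is the bulk two-point energy estimate: for edges $e,f$ centred at $x,y\in I_n$, and uniformly in $\lambda\in K$,
$$\mathrm{Cov}_{\mu_n^\lambda}\!\bigl(\sigma_{e_1}\sigma_{e_2},\,\sigma_{f_1}\sigma_{f_2}\bigr)
\;=\;\frac{C_\lambda(x,y)}{n^2\bigl(|x-y|\vee n^{-1}\bigr)^{2}}\,\bigl(1+o(1)\bigr),$$
with $0<c\le C_\lambda(x,y)\le C$ uniformly on $I_n\times I_n$. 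This is the discrete incarnation of the continuum CFT energy two-point function $\langle\varepsilon(x)\varepsilon(y)\rangle_\Omega\asymp|x-y|^{-2}$, whose bulk positivity is classical for planar Ising via Pfaffian identities; for the near-critical case it is precisely the type of uniform estimate explicitly added in \cite{duminil2022planar}.

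For the upper bound, replace the covariance by its absolute value. Using that $I_n$ has lattice density $\asymp n^2$, for every fixed $x\in I_n$,
$$\sum_{y\in I_n}\frac{1}{n^2\bigl(|x-y|\vee n^{-1}\bigr)^{2}}
\;\asymp\;\int_{1/n}^{1}\frac{2\pi r\cdot n^2}{n^{2} r^{2}}\,dr
\;\asymp\;\log n,$$
and summing over the $\asymp n^2$ choices of $x$ yields $\asymp n^{2}\log n$; dividing by $n^2$ gives the $\lesssim\log n$ half. For the lower bound, restrict the double sum to pairs $(x,y)$ with $2/n\le|x-y|\le 1/4$ and with both $x,y$ at macroscopic distance from $\partial\Omega$ (a positive-density subset of $I_n\times I_n$). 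On this range the covariance is \emph{positive} and bounded below by $c/(n^2|x-y|^2)$, so exactly the same integration produces a matching $\gtrsim\log n$ contribution.

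The main obstacle is precisely the uniform two-sided control on the energy two-point covariance. The upper bound follows from fairly soft inputs (say a crude Pfaffian estimate plus an $O(1/n)$-continuity of the near-critical weight coming from the mass term $\frac{\lambda}{n}\sum_{i\sim j}\sigma_i\sigma_j$ being bounded per edge); the genuinely delicate step is the \emph{lower bound}, which requires both the sign (positivity of the two-point energy function) and the correct $|x-y|^{-2}$ order to persist \emph{uniformly} for $\lambda\in K$ and uniformly in $n$. This is where one appeals to the sharp near-critical energy asymptotics of \cite{duminil2022planar}: the $\lambda$-massive energy correlator depends analytically on $\lambda$ and coincides at $\lambda=0$ with the strictly positive CFT correlator, so positivity and $\asymp |x-y|^{-2}$ decay survive on compact $\lambda$-windows. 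Once this uniform input is granted, the remainder of the proof is the elementary logarithmic integral above.
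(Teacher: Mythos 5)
Your proposal is correct and follows essentially the same route as the paper: expand the variance as a double sum of two-point energy covariances, plug in a uniform (in $\lambda\in K$) two-sided estimate $\asymp (n|x-y|)^{-2}$, and read off the logarithmic integral. Two small remarks on how the paper phrases the key input. First, rather than working with $\mathrm{Cov}_{\mu_n^\lambda}(\sigma_{e_1}\sigma_{e_2},\sigma_{f_1}\sigma_{f_2})$ directly and appealing to Pfaffian identities for positivity, the paper translates once and for all to FK covariances via the exact coupling identities $\P^{FK}[\omega_{ij}=1]=\frac{p}{2}(\E[\sigma_i\sigma_j]+1)$ and $\Cov{\omega_{e_1},\omega_{e_2}}=\frac{p^2}{4}\Cov{\sigma_{e_1}\sigma_{e_2},\sigma_{f_1}\sigma_{f_2}}$; this simultaneously gives positivity for free (edge indicators are increasing events, so FKG applies) and lands directly on the object for which the sharp two-sided near-critical mixing-rate estimate $\Delta_p(R)\asymp R^{-1}$ from \cite{duminil2022planar} (their (1.32) in Theorem 1.13) is stated. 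Second, your intermediate remark that the upper bound follows from ``an $O(1/n)$-continuity of the near-critical weight per edge'' is misleading and should be dropped: an $O(1/n)$-per-edge change in the Hamiltonian can and does have a macroscopic cumulative effect (this is the whole content of Theorem \ref{th.mainI}), so naive per-edge continuity does not by itself control the near-critical two-point function uniformly in $\lambda$. As you correctly note at the end, both the upper and the lower bound genuinely rest on the near-critical stability results of \cite{duminil2022planar} together with the FK translation; neither half is ``soft.''
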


\ni
{\em Proof of Lemma \ref{l.varE}.}
We first recall the classical fact that the $k$-point correlation function  (for $k\in \{1,2\}$) of the energy field are up to constant the same as the $k$-point correlation functions of the FK-Ising percolation. Indeed, 
\bi
\item For $k=1$ (one edge). If $i\sim j$. Then 
\begin{align*}\label{}
\FK{}{FK}{\omega_{ij}=1} & = \Pb{\sigma_i=\sigma_j \text{ and  the edge }e=(i,j) \text{   is open }}\\
& =\Pb{\sigma_i=\sigma_j} p \\
&= \frac p 2 (\Eb{\sigma_i \sigma_j}+1)\,,
\end{align*}
where $p=p(\beta)$ and we relied on the classical FK / spin Ising coupling. 
\item For $k=2$, If $(i_1,j_1)$ and $(i_2,j_2)$ are two distinct edges, the same classical coupling leads us to 
\begin{align*}\label{}
\Cov{\omega_{i_1j_1}, \omega_{i_2j_2}}= \frac{p^2} 4 \Cov{\sigma_{i_1}\sigma_{j_1}, \sigma_{i_2}\sigma_{j_2} }
\end{align*}
\ei 
Using the estimate (1.32) from Theorem 1.13 in \cite{duminil2022planar}, we readily obtain 
\begin{align*}\label{}
\mathrm{Var}_{\mu_n^\lambda}\big[\frac {E_n^{bulk}(\sigma)} n \big]  & =  \frac {4} {p^2 n^2 }  \sum_{e_1 \neq e_2 \in I_n} \mathrm{Cov}_{\lambda}^{FK}(\omega_{e_1}, \omega_{e_2})  +  \frac 1 {n^2} \sum_{e=\{i,j\} \in I_n} (1 - \Eb{\sigma_i \sigma_j}^2)  \\
& \asymp  \log(n) 
\end{align*}
More details will be given below for similar sums which appear when dealing with subsequential scaling limits. 
\qed
\begin{remark}
The reason why we only consider the {\em bulk} contribution to the energy (i.e. inside $I_n=[-\tfrac 1 2, \tfrac 1 2]^2\cap \frac 1 n \Z^2$ at macroscopic distance from the boundary) is because  covariance estimates for edges close to the boundary would require additional work. See Remarks 5.5 and 6.7 in \cite{duminil2022planar}. 
\end{remark}

Our second Lemma establishes that the mean value of the bulk energy $E_n^{bulk}$ evolves faster than the fluctuations under the near-critical perturbation $\beta=\beta_c + \frac \lambda n$.  

\begin{lemma}\label{l.mean}
For any $\lambda > 0$, there exists a constant $c=c(\lambda)>0$ such that 
\begin{align*}\label{}
\mu_n^\lambda(\frac {E_n^{bulk}} n) - \mu_n^0( \frac {E_n^{bulk}} n) \geq c \log(n). 
\end{align*}
\end{lemma}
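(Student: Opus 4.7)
The plan is to express the difference of expected bulk energies as an integral over the inverse temperature, and then bound the integrand below using Lemma~\ref{l.varE} combined with FKG positivity coming from the FK--Ising coupling.

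First, writing $E_n^{\mathrm{tot}}(\sigma):= \sum_{i\sim j \in \Omega_n} \sigma_i \sigma_j$ for the full energy, the Gibbs differentiation identity $\tfrac{d}{d\beta}\mu_\beta(f) = \mathrm{Cov}_{\mu_\beta}(f, E_n^{\mathrm{tot}})$ applied to $f = E_n^{\mathrm{bulk}}/n$ (together with the chain rule, since $\beta = \beta_c + \lambda'/n$) yields after integrating from $0$ to $\lambda$:
\begin{align*}
\mu_n^\lambda\Big(\tfrac{E_n^{\mathrm{bulk}}}{n}\Big) - \mu_n^0\Big(\tfrac{E_n^{\mathrm{bulk}}}{n}\Big) \;=\; \frac{1}{n^2}\int_0^\lambda \mathrm{Cov}_{\mu_n^{\lambda'}}\!\bigl(E_n^{\mathrm{bulk}},\, E_n^{\mathrm{tot}}\bigr)\, d\lambda'\,.
\end{align*}

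Next, I would bound the integrand from below by $\mathrm{Var}_{\mu_n^{\lambda'}}(E_n^{\mathrm{bulk}})$. Writing $E_n^{\mathrm{tot}} = E_n^{\mathrm{bulk}} + E_n^{\mathrm{ext}}$, it is enough to check that the cross term $\mathrm{Cov}(E_n^{\mathrm{bulk}}, E_n^{\mathrm{ext}}) \geq 0$. Expanding this into a sum over edge pairs $(e_1, e_2)$ and invoking the FK-coupling identity $\mathrm{Cov}(\sigma_{i_1}\sigma_{j_1}, \sigma_{i_2}\sigma_{j_2}) = \tfrac{4}{p^2}\mathrm{Cov}(\omega_{e_1}, \omega_{e_2})$ already recalled in the proof of Lemma~\ref{l.varE}, each summand reduces to an FK-edge covariance which is non-negative by the FKG inequality on the FK--Ising measure with free boundary conditions, applied to the monotone events $\{\omega_{e_i} = 1\}$. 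This yields $\mathrm{Cov}_{\mu_n^{\lambda'}}(E_n^{\mathrm{bulk}}, E_n^{\mathrm{tot}}) \geq \mathrm{Var}_{\mu_n^{\lambda'}}(E_n^{\mathrm{bulk}})$.

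To conclude, I would apply Lemma~\ref{l.varE} with the compact set $K = [0, \lambda]$: uniformly for $\lambda' \in [0,\lambda]$, there is a constant $c_1 = c_1(\lambda) > 0$ so that $\mathrm{Var}_{\mu_n^{\lambda'}}(E_n^{\mathrm{bulk}}) \geq c_1 \, n^2 \log n$. Inserting this uniform lower bound into the integral gives
\begin{align*}
\mu_n^\lambda\Big(\tfrac{E_n^{\mathrm{bulk}}}{n}\Big) - \mu_n^0\Big(\tfrac{E_n^{\mathrm{bulk}}}{n}\Big) \;\geq\; \frac{1}{n^2} \cdot \lambda \cdot c_1 n^2 \log n \;=\; c_1 \lambda \log n\,,
\end{align*}
which is the desired inequality with $c = c_1 \lambda > 0$. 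The only subtle point is that one really needs the lower bound on the variance to hold uniformly in $\lambda' \in [0,\lambda]$ (otherwise integrating could lose the factor $\lambda$), but this is exactly the uniformity statement built into Lemma~\ref{l.varE}.
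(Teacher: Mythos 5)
Your proof is correct, and it follows a genuinely different route than the paper's. The paper also starts from the Gibbs differentiation identity, but it applies it per edge: it writes $\frac{d}{d\beta}E_n^{\mathrm{bulk}}$ as a sum over edges $\{i,j\}\subset I_n$ of $\frac{d}{d\beta}\E[\sigma_i\sigma_j]$, converts each term via the FK coupling to $\frac{d}{dp}\P_p[\omega_{ij}=1]$, and then invokes the \emph{derivative} estimate $\frac{d}{dp}\P_p[\omega_{ij}=1]\geq c_1(\lambda)\log n$ from Theorem~1.13 of \cite{duminil2022planar}. You instead keep the full covariance $\mathrm{Cov}_{\mu_\beta}(E_n^{\mathrm{bulk}},E_n^{\mathrm{tot}})$, observe that the cross-term $\mathrm{Cov}(E_n^{\mathrm{bulk}},E_n^{\mathrm{ext}})$ is non-negative by FKG for the FK measure (via the identity $\mathrm{Cov}(\sigma_{i_1}\sigma_{j_1},\sigma_{i_2}\sigma_{j_2})=\tfrac{4}{p^2}\mathrm{Cov}(\omega_{e_1},\omega_{e_2})$ already derived in Lemma~\ref{l.varE}), and then bound from below by $\mathrm{Var}(E_n^{\mathrm{bulk}})$, which Lemma~\ref{l.varE} controls uniformly in $\lambda'\in[0,\lambda]$. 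The effect is that you only need the second-moment/covariance input to Lemma~\ref{l.varE} plus soft positivity from FKG, not the separate per-edge derivative estimate; this makes the argument more self-contained within the paper and neatly sidesteps any worry about boundary edges in $E_n^{\mathrm{ext}}$, since those only ever enter through a sign. Both arguments ultimately rest on Theorem~1.13 of \cite{duminil2022planar}, just routed through different pieces of it. The uniformity caveat you flag at the end is exactly the right thing to worry about, and it is indeed covered by the ``uniform over compact sets'' clause in Lemma~\ref{l.varE}.
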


\ni
{\em Proof of Lemma \ref{l.mean}.}
For any $\beta \geq 0$,
\begin{align*}\label{}
\frac d {d \beta} E_n^{bulk} & = \sum_{i\sim j \in I_n}  \frac d {d \beta}  \EFK{\Omega_n,\beta}{free}{\sigma_i \sigma_j} \\
& = \sum_{i\sim j \in I_n}  \frac d {d \beta} \big(\frac 2 p \FK{p}{FK}{\omega_{ij}=1}  -1 \big) \\
& =  \sum_{i\sim j \in I_n}  \left(  \frac d {dp} \FK{p}{FK}{\omega_{ij}=1}  \frac 2 p  - \frac 2 {p^2} \FK{p}{FK}{\omega_{ij}=1} \right) \frac d {d\beta} p(\beta)
\end{align*}
Recall $p=p(\beta) = 1- e^{-2\beta}$. Using Theorem 1.13 from \cite{duminil2022planar} we obtain the existence of a constant $c_1(\lambda)>0$, s.t. for any $p_c\leq p \leq 1- e^{-2 \beta_c - 2 \frac \lambda n}$ and any edge $e=\{i,j\}, i\sim j$ in $I_n$ (i.e. at macroscopic distance from the boundary), we have 
\begin{align*}\label{}
\frac d {dp} \FK{p}{FK}{\omega_{ij}=1}  \geq c_1(\lambda)  \log(n) \,.
\end{align*}
This implies
\begin{align*}\label{}
\frac d {d \beta} E_n^{bulk} & \geq  c_2(\lambda) n^2 \log(n)\,.
\end{align*}
Integrating this differential inequality along $\beta \in [\beta_c, \beta_c + \frac \lambda n]$ finishes the proof. 
\qed

The above two Lemmas are sufficient to find a (discrete) event making a clear distinction between $\mu_n^0$ and $\mu_n^\lambda$ as $n\to \infty$. 
\begin{proposition}\label{pr.singI}
For any $\lambda\neq 0$, there exists a sequence of events $A_n$ s.t. 
\begin{align*}\label{}
\mu_n^0(A_n) \to 0  \,\, \text{    while   } \,\, \mu_n^\lambda(A_n) \to \infty
\end{align*} 
\end{proposition}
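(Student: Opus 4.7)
The plan is to combine Lemmas \ref{l.varE} and \ref{l.mean} in the most direct possible way: since the mean of the normalized bulk energy $E_n^{bulk}/n$ moves by order $\log n$ between $\mu_n^0$ and $\mu_n^\lambda$, while its standard deviation under either measure is only of order $\sqrt{\log n}$, a Chebyshev-type threshold event separates the two measures. Let me set $M_n^0 := \mu_n^0(E_n^{bulk}/n)$ and $M_n^\lambda := \mu_n^\lambda(E_n^{bulk}/n)$, and let $V_n := \log n$ play the role of both the mean gap (up to constants, by Lemma \ref{l.mean}) and the variance (up to constants, by Lemma \ref{l.varE}).

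For concreteness assume first $\lambda > 0$ and let $c = c(\lambda) > 0$ be as in Lemma \ref{l.mean}. I would define the event
\begin{equation*}
A_n := \Bigl\{ \sigma \in \{\pm1\}^{\Omega_n} \,:\, \tfrac{1}{n} E_n^{bulk}(\sigma) \geq M_n^0 + \tfrac{c}{2}\log n \Bigr\}.
\end{equation*}
Under $\mu_n^0$, the event $A_n$ is a deviation of $E_n^{bulk}/n$ from its $\mu_n^0$-mean by at least $\tfrac{c}{2}\log n$, and Chebyshev's inequality combined with Lemma \ref{l.varE} gives
\begin{equation*}
\mu_n^0(A_n) \leq \frac{\mathrm{Var}_{\mu_n^0}[E_n^{bulk}/n]}{(c/2)^2 (\log n)^2} = O\!\left(\frac{1}{\log n}\right) \longrightarrow 0.
\end{equation*}
Under $\mu_n^\lambda$, Lemma \ref{l.mean} ensures $M_n^\lambda \geq M_n^0 + c\log n$, so the complementary event $A_n^c$ is contained in $\{E_n^{bulk}/n \leq M_n^\lambda - \tfrac{c}{2}\log n\}$, and another application of Chebyshev with Lemma \ref{l.varE} yields $\mu_n^\lambda(A_n^c) = O(1/\log n)$, hence $\mu_n^\lambda(A_n) \to 1$.

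For $\lambda < 0$, the same argument applies with the reverse inequality defining $A_n$, provided one has the analogue of Lemma \ref{l.mean} with the sign flipped, i.e.\ $M_n^0 - M_n^\lambda \geq c(\lambda) \log n$. This should follow by re-running the proof of Lemma \ref{l.mean}: the derivative $\frac{d}{dp} \mathbb{P}_p^{FK}[\omega_{ij}=1] \geq c_1 \log n$ from \cite{duminil2022planar} holds in a two-sided neighbourhood of $p_c$ (at macroscopic distance from the boundary), and integrating over $\beta \in [\beta_c + \lambda/n, \beta_c]$ gives the desired inequality.

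I expect no serious obstacle here: the heart of the argument is the two preceding lemmas, and what remains is only a variance/mean comparison. The only delicate point worth flagging is ensuring that the Duminil-Copin--Manolescu estimates used in the proof of Lemma \ref{l.mean} are genuinely two-sided around $p_c$ so that the case $\lambda < 0$ is handled on equal footing with $\lambda > 0$; otherwise one could also deduce the $\lambda<0$ case from the $\lambda>0$ case by the FKG-type monotonicity or by a Kramers-Wannier duality argument, since the singularity statement of Proposition \ref{pr.singI} is symmetric in the sign of $\lambda$.
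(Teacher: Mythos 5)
Your proof is correct and follows essentially the same route as the paper's: define the threshold event on $E_n^{bulk}/n$ at a level separating the two means, then apply Chebyshev on each side using Lemmas~\ref{l.varE} and~\ref{l.mean}. The only cosmetic difference is that the paper places the threshold at $M_n^0 + (\log n)^{3/4}$ (giving $O((\log n)^{-1/2})$ decay) while you place it at $M_n^0 + \tfrac{c}{2}\log n$ (giving $O((\log n)^{-1})$), and you additionally spell out the $\lambda<0$ case, which the paper leaves implicit; note also that the paper's stated conclusion ``$\mu_n^\lambda(A_n)\to\infty$'' is evidently a typo for ``$\to 1$'', which you correctly interpret.
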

Let us insist once again that such a result is by far not sufficient to imply singularity of the near-critical continuum limits. Yet, our proof will be inspired by the setup of this Proposition.

\ni
{\em Proof of the Proposition.}
Let us define the following sequence of events:
\begin{align*}\label{}
A_n:= \left\{  E_n^{bulk}  \geq \mu_n^0(E_n^{bulk}) + \log(n)^{3/4} \right\}
\end{align*}
Using Markov together with Lemma \ref{l.varE}, we obtain that 
\begin{align*}\label{}
\mu_n^0(A_n) \leq O(1) \frac 1 {\log(n)^{3/2}} \log(n) \to 0\,.
\end{align*}
On the other hand, using Lemma \ref{l.mean}, we have that for any $\lambda>0$ and $n$ sufficiently large, 
\begin{align*}\label{}
\mu_n^0(E_n^{bulk}) + \log(n)^{3/4}  \leq \mu_n^\lambda(E_n^{bulk}) - \log(n)^{3/4}\,.
\end{align*}
Using again Markov together with Lemma \ref{l.varE}, we conclude that  $\mu_n^\lambda(A_n^c) \to 0$ as desired. 
\qed

\smallskip

We will follow the same strategy for the continuum near-critical limits, except in that case, we no longer have access to the status of individual spins or edges. Instead, we will count the number of mesoscopic FK crossing events in ``small'' mesoscopic squares. 

\subsection{Existence of subsequential scaling limits.}
It is not known whether there is a unique near-critical Ising scaling limit. Let us state it as a conjecture. 

\begin{conjecture}\label{c.SL}
Let $\lambda \neq 0$ be fixed and let $(\omega_n,\sigma_n)$ be an FK-Ising coupling of an Ising model at $\beta = \beta_c + \frac \lambda n$ in $[-1,1]^2 \cap \frac 1 n \Z^2$ with any of the boundary conditions $a)$, $b)$, $c)$ listed at the beginning of this section. We may then view $(\omega_n,\sigma_n)$ as a random point in several possible topological spaces:
\bi
\item FK clusters with prescribed colours. Possible topologies here are the Schramm-Smirnov {\em quad-topology} from \cite{schramm2011scaling} (see also \cite{garban2018scaling}). Another natural choice here would be the space of coloured loops equipped with an Hausdorff-like topology (\cite{camia2006two}). In fact the equivalence between both topological spaces  has been shown in \cite{camia2006two, garban2018scaling,holden2023convergence} in the case of $q=1$ critical percolation (what this means here is that if $(\omega, \Gamma)$ are the scaling limits of  critical percolation under the quad topology and the loop topology, then $\Gamma$ is a.s. determined by $\omega$ and vice-versa). 
\item We may view $\sigma_n$ as a random magnetic field $\Phi_n$ in some negative index Sobolev space (following \cite{camia2015planar, camia2016planar, furlan2017tightness}). 
\item We may view $\sigma_n$ as a collection of loops separating $+$ and $-$ clusters (at criticality this leads to the nested $\CLE_3$ limit from \cite{benoist2019scaling}). 
\ei
Under each of these topologies, and when $\lambda\neq 0$ is fixed,  $(\omega_n,\sigma_n)$ should converge to a unique limiting $\lambda$-near-critical scaling limit. 
\end{conjecture}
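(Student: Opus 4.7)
The plan is to separate the two claims in the conjecture --- existence of subsequential limits and uniqueness of the limit --- since they require quite different tools. Tightness in each of the three topologies should follow from uniform near-critical RSW-type estimates available in \cite{duminil2022planar}, while uniqueness is the real obstacle and will require a characterization that does not rely on conformal invariance.

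For tightness in the quad topology of \cite{schramm2011scaling}, the Schramm--Smirnov framework reduces the problem to uniform lower bounds on $k$-arm events and on box-crossing probabilities. These are available in the near-critical regime from \cite{duminil2022planar}, whose mixed arm and RSW bounds are uniform in $\lambda$ on compact subsets of $\R$. For the magnetic field representation $\Phi_n := n^{-15/8}\sum_x \sigma_n(x)\delta_x$, tightness in $H^{-1/8-\varepsilon}$ would follow from a second-moment bound using the near-critical two-point function, controlled uniformly in $\lambda$ by the same references in the spirit of \cite{furlan2017tightness,camia2015planar}. For the $\pm$-interface representation in a Hausdorff-type topology on loop ensembles, tightness should follow from FK tightness together with a near-critical version of the FK/spin coupling used in \cite{benoist2019scaling}.

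For uniqueness, I would attempt a sprinkling/dynamical characterization: couple FK-Ising configurations across $\lambda\in[0,\lambda_0]$ via a driven Poisson process of edge rewirings, in the spirit of the Garban--Pete--Schramm dynamical percolation construction, so that the law at $\lambda_0$ is identified as the time-$\lambda_0$ marginal of a Markov evolution started from the (unique) critical scaling limit at $\lambda=0$. One would need to show that this dynamics extends continuously to the quad-crossing space, which is itself a substantial task. In parallel, the near-critical $k$-point convergence results of \cite{park2022convergence}, combined with the $k$-point FK bounds of \cite{duminil2022planar}, should pin down all finite-dimensional distributions of crossing events of macroscopic quads and thereby yield uniqueness in the quad topology. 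The equivalence between the three topologies would then be obtained by adapting the Camia--Newman and Garban--Pete--Schramm machinery (\cite{camia2006two,garban2018scaling}) to pass between the quad and loop descriptions of FK data, and by invoking the \emph{CLE percolation} framework of \cite{miller2017cle} together with the Edwards--Sokal coupling to pass between FK and spin data; these arguments are local in $\lambda$ and should extend to $\lambda\neq 0$.

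The main obstacle is the uniqueness step itself. No conformal invariance is available when $\lambda\neq 0$, hence no direct $\SLE/\CLE$ characterization of the limit. Worse, as shown by Theorem~\ref{th.mainI}, the near-critical law is singular with respect to the critical one, so one cannot hope to write down an explicit Radon--Nikodym derivative bridging the two laws, and the sprinkling dynamics suggested above will not yield a limit measure absolutely continuous along the $\lambda$-flow --- its generator should itself be a singular object with respect to the critical invariant measure. Any proof of uniqueness must therefore be intrinsically parameter-dependent and must rely on fresh input beyond the critical CFT picture, which is precisely why the statement is formulated here only as a conjecture.
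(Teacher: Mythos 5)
This statement is labeled a \emph{Conjecture} in the paper; there is no proof to compare against. What the paper actually establishes is only the weaker \emph{tightness} claim (Proposition~\ref{pr.tight}), i.e.\ existence of subsequential limits, with a one-paragraph justification citing compactness of the quad-crossing space, near-critical RSW stability from \cite{duminil2022planar,duminil2011connection,chelkak2016crossing}, and the stability of the one-arm exponent. Uniqueness is deliberately left open. So the correct assessment of your proposal is not whether it ``matches the proof'' but whether you have correctly identified what is known, what is open, and whether your suggested route for the open part is plausible.

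On the known part, your tightness sketch is broadly in line with the paper, with one inaccuracy worth flagging: in the quad topology $(\calH,d_\calH)$ of \cite{schramm2011scaling} the space is already \emph{compact}, so tightness is automatic and requires no arm-event or RSW input whatsoever (the paper says this explicitly). The uniform near-critical arm and mixing-rate estimates of \cite{duminil2022planar} are needed elsewhere, namely for the continuity statement Proposition~\ref{pr.SS} (convergence of individual quad-crossing probabilities for subsequential limits) and for the variance/mean estimates, not for tightness in $\calH$. Your appeal to RSW for tightness in the loop and Sobolev topologies is appropriate, matching the paper's sketch.

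On the open uniqueness part, your diagnosis is essentially correct and you rightly stop short of claiming a proof. Two refinements: first, the GPS-type sprinkling/dynamical coupling you suggest does not just ``extend continuously to the quad-crossing space''; for $q=1$ its entire engine is the convergence of a suitably rescaled \emph{pivotal measure} at mesoscopic scales, and it is that convergence which yields uniqueness as a byproduct. For $q=2$ no analog of the pivotal-measure construction currently exists, and this (rather than the singularity of Theorem~\ref{th.mainI}) is the actual missing ingredient; singularity of the near-critical limit w.r.t.\ the critical one is fully compatible with a sprinkling construction (it was so for $q=1$). Second, the $k$-point observable convergence of \cite{park2022convergence} pins down correlation functions but does not by itself determine the law of the full crossing configuration, so it is supporting evidence rather than a uniqueness mechanism. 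The paper cites it in exactly that cautious role (Remark after Proposition~\ref{pr.tight}).
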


On the other hand, subsequential scaling limits are known to exist. We state it as a (folklore knowledge) Proposition.
\begin{proposition}\label{pr.tight}
Under each of the above choices of topological spaces, the near-critical  FK-Ising model $(\omega_n, \sigma_n)$ at $\beta= \beta_c + \frac \lambda n$ is tight as $n\to \infty$. 
\end{proposition}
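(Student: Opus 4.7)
The plan is to treat each of the three topologies in turn, reducing tightness of the near-critical FK-Ising ensemble $(\omega_n,\sigma_n)$ to a standard abstract tightness criterion fed with uniform-in-$n$ RSW and correlation bounds taken from \cite{duminil2022planar}. The crucial external input is that at $\beta_{\lambda,n}=\beta_c+\lambda/n$ box-crossing probabilities inside $[-1,1]^2\cap\tfrac1n\Z^2$ stay bounded away from $0$ and $1$ uniformly in $n$, and that $k$-arm probabilities in annuli $A(x,r,R)$ with $1/n\leq r\leq R\leq 1$ satisfy the same polynomial upper bounds as at criticality; both statements are consequences of the near-critical analysis of \cite{duminil2022planar}, and should be checked to be uniform in $\lambda$ ranging over any fixed compact subset of $\R$.

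For the FK quad-topology of Schramm--Smirnov, I would invoke the abstract tightness criterion of \cite{schramm2011scaling}, which turns tightness of $\omega_n$ into a uniform polynomial upper bound on the $4$-arm probability in all such annuli; that bound is exactly the near-critical arm estimate just described. Tightness in the Hausdorff loop topology then follows either by feeding the same $4$-arm input into the Aizenman--Burchard criterion, or by transporting tightness from the quad-topology through the equivalence of loop and quad topologies used in \cite{camia2006two,garban2018scaling,holden2023convergence}. For the renormalised magnetic field $\Phi_n:=n^{-15/8}\sum_x \sigma_n(x)\delta_x$, tightness in $H^{-s}(\Omega)$ for $s$ large enough reduces, via the compact Sobolev embedding $H^{-s'}\hookrightarrow H^{-s}$, to a uniform bound $\E[\|\Phi_n\|_{H^{-s'}}^2]\leq C$, which in turn reduces to a double sum over the two-point function; the required uniform upper bound $\<{\sigma_x\sigma_y}_{\beta_{\lambda,n}}\lesssim|x-y|^{-1/4}$ is exactly the ingredient used at $\beta_c$ in \cite{camia2015planar,furlan2017tightness}, and its uniform near-critical extension follows again from \cite{duminil2022planar}. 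Finally, for the loop topology of $\pm$ spin clusters, I would follow the criticality argument of \cite{benoist2019scaling}: after conditioning on the FK structure via Edwards--Sokal, spin cluster boundaries are local functions of the FK loop ensemble (already tight) and of i.i.d.\ $\pm$ colourings, so their tightness is inherited.

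The main obstacle is not conceptual but purely a matter of quantitative bookkeeping: one must check that the constants in the uniform RSW and arm-exponent bounds of \cite{duminil2022planar} can be chosen uniformly in $\lambda$ over compact sets of $\R$ and uniformly across the three boundary-condition choices (a)--(c) listed at the start of the section, and that the RSW estimates extend up to the boundary of $[-1,1]^2$ in cases (b) and (c). Once these uniformity statements are granted, tightness in each of the three topologies follows by a routine invocation of the corresponding standard criterion, which is why the proposition is essentially folklore rather than a new result.
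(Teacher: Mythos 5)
The central error is in the quad‐topology part. The Schramm–Smirnov quad‐crossing space $(\calH,d_\calH)$ is itself a \emph{compact} metric space; this is one of the main points of \cite{schramm2011scaling}. Consequently, the space of Borel probability measures on $(\calH,d_\calH)$ is compact for the weak topology, and tightness of $(\omega_n)_n$ is automatic — no arm‐event estimate of any kind is needed. There is no ``abstract tightness criterion of \cite{schramm2011scaling} that turns tightness into a uniform polynomial upper bound on the $4$-arm probability''; the arm‐event estimates in that framework are used for a different purpose, namely to show that the crossing probability of a fixed quad is continuous along the scaling limit (the analogue here is Proposition \ref{pr.SS}, proved via the one-arm decay, not the $4$-arm event). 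By importing an unnecessary $4$-arm hypothesis you both overcomplicate the easy case and misidentify which arm exponent matters.

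For the loop topologies your overall strategy (RSW for annulus‐crossing bounds feeding into an Aizenman–Burchard type argument, plus the near‐critical stability of RSW from \cite{duminil2022planar}) is in line with the paper, which cites RSW / strong RSW following \cite{duminil2011connection,chelkak2016crossing} together with near‐critical stability; but be careful that what the Aizenman–Burchard criterion actually asks for is a bound on $k$‐fold annulus traversals with exponent diverging in $k$, which is derived from RSW box‐crossings rather than from the $4$‐arm event. For the magnetization field, your two‐point‐function bound $\<{\sigma_x\sigma_y}\lesssim |x-y|^{-1/4}$ and the paper's appeal to the near‐critical stability of the \emph{one‐arm} exponent from \cite[Theorem~1.4]{duminil2022planar} are two phrasings of essentially the same FK input (the spin two‐point function is comparable to the square of the FK one‐arm probability), so that part is fine in spirit. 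But the quad‐topology misconception is a genuine gap: it should simply read that compactness of $(\calH,d_\calH)$ makes tightness immediate.
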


This is obvious in the case of the {\em quad-crossing} topology (which is a compact metric space). For loop topologies ($\CLE_3$ or $\CLE_{16/3}$), this can be seen as a consequence of RSW / strong RSW (following \cite{duminil2011connection,chelkak2016crossing}) as well as the stability of these RSW estimates in the near-critical window (\cite{duminil2022planar}). Finally, tightness for the magnetization field follows from the same arguments as in \cite{camia2015planar} together with the stability of the one-arm exponent of FK in the near-critical regime which is derived  in \cite[Theorem 1.4]{duminil2022planar}. We shall not give further details on tightness here. 

\begin{remark}\label{}
The above conjecture is also supported by the work \cite{park2022convergence} which proves convergence of the 2-point and 4-point fermionic obersvables for the associated massive FK-Ising percolation.
\end{remark}

\subsection{Singularity under $\CLE_{16/3}$ scaling limit.}

Let $(\sigma_n,\omega_n)$ an FK-Ising model in $[-1,1]^2 \cap \frac 1 n \Z^2$ equipped with boundary conditions a),b) or c) listed at the beginning of the present section. This coupling induces a pair of quad crossing configurations $(\omega_n^+, \omega_n^-)$ which belongs to $\calH^2$, where $(\calH, d_\calH)$ is the quad crossing space from \cite{schramm2011scaling,garban2018scaling}. In words $\omega_n^+$ (resp. $\omega_n^-$) provides all the topological quads which are traversed from left to right by an FK cluster with $+$ colour (resp. $\omega_n^-$). We refer to  \cite{schramm2011scaling,garban2018scaling} for a precise definition of the compact space metric space $(\calH, d_\calH)$. 

As observed before, by compactness of $(\calH, d_\calH)$, subsequential scaling limits of $(\omega_n^+,\omega_n^-)$ do exist.  

\begin{theorem}\label{th.mainI}
For any fixed $\lambda \neq 0$, any subsequential scaling limit of $(\omega_n^+, \omega_n^-) \in \calH^2$ under $\P^{Ising}_{\beta_c + \frac \lambda n}$ with boundary conditions a), b) or c) is singular with respect to the (unique) critical scaling limit ($\beta=\beta_c$). 
\end{theorem}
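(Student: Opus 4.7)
The plan is to lift the discrete singularity argument of Proposition~\ref{pr.singI} to the continuum by replacing the bulk energy statistic $E_n^{\mathrm{bulk}}/n$ --- which is not measurable in the quad-crossing topology --- with a mesoscopic FK crossing count that is. I would fix a small mesoscopic scale $\delta>0$ (taken independent of $n$ and sent to zero only at the end), tile the bulk region $I = [-\tfrac12,\tfrac12]^2$ by $\delta$-squares $Q_1,\dots,Q_M$ with $M\asymp\delta^{-2}$, and attach to each square $Q$ a quad-crossing event $C_Q^\delta$ --- say the existence of a $+$-FK left-right crossing of $Q$ --- which is a bounded continuous functional of $(\omega_n^+,\omega_n^-)\in\calH^2$. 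The continuum analog of $E_n^{\mathrm{bulk}}$ is then the counting statistic $N_n^\delta := \sum_{i=1}^M \mathbf{1}_{C_{Q_i}^\delta}$, which is well-defined on $\calH^2$ and hence survives passage to any subsequential scaling limit of $(\omega_n^+,\omega_n^-)$.

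The three quantitative inputs I would need come from Duminil-Copin--Manolescu~\cite{duminil2022planar}: (a) stability of RSW and of FK crossing probabilities throughout the near-critical window $\beta \in [\beta_c, \beta_c + O(1/n)]$; (b) a lower bound, uniform in $n$, on the $\beta$-derivative of $\P_\beta(C_Q^\delta)$ in the spirit of Lemma~\ref{l.mean}; and (c) polynomial off-diagonal decorrelation between FK crossings of well-separated mesoscopic squares. Integrating (b) over $\beta \in [\beta_c, \beta_c + \lambda/n]$ should produce a mean shift
\begin{align*}
\bigl| \E_\lambda[N_n^\delta] - \E_0[N_n^\delta] \bigr| \gtrsim c(\lambda)\, \delta^{-1},
\end{align*}
while (a) together with (c) yields a variance bound $\var[N_n^\delta] \lesssim C(\lambda)\,\delta^{-2}$. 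One then sets
\begin{align*}
A_\delta := \bigl\{\, N^\delta \le \E_0[N^\delta] + t(\delta) \,\bigr\}
\end{align*}
with $t(\delta)$ placed strictly between the critical standard deviation and the mean shift, and concludes by Chebyshev that $\P_0(A_\delta) \to 1$ and $\P_\lambda(A_\delta) \to 0$ as $\delta \to 0$, yielding singularity through a continuous quad-topology observable.

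The main obstacle is exactly the borderline scaling of this naive counting statistic. By FKG, distinct $+$-crossings are positively correlated, so the diagonal contribution $M\cdot p(1-p) \asymp \delta^{-2}$ is a lower bound on $\var_0[N^\delta]$ that cannot be cancelled, and its standard deviation $\asymp \delta^{-1}$ is therefore of the same order as the mean shift. A single mesoscopic scale then only detects an $O(1)$ difference in probability --- enough to show the two limits are distinct, but not enough to show they are mutually singular. To push through to genuine singularity I expect one must either telescope many dyadic mesoscopic scales $\delta_k = 2^{-k}$ exploiting the multi-scale weak decoupling of FK quad-crossings provided by \cite{duminil2022planar}, or replace the LR-crossing indicator by a rarer alternating $k$-arm event at the mesoscopic scale whose near-critical sensitivity, through a Kesten-type scaling relation with the correlation length, is amplified relative to its fluctuations. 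Ruling out the continuum analogue of Example~\ref{ex.1} --- where the milder perturbation $\lambda/((\log n)^{1/10}n)$ still produces the discrete near-singularity of Proposition~\ref{pr.singI} but is expected to leave the scaling limit unchanged --- is the genuine difficulty, and it is precisely where the uniformity of the decorrelation estimates of \cite{duminil2022planar} in the near-critical window becomes indispensable.
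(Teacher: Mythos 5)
Your outline correctly identifies the mechanism (replace the discrete energy by a mesoscopic quad-crossing count $Z_k$, compare mean shift to variance, pass to the continuum), but you stop short because of a quantitative miscalculation, and the paper shows the obstacle you flag is not actually there. Your estimate of the mean shift, $|\E_\lambda[N^\delta]-\E_0[N^\delta]|\gtrsim\delta^{-1}$, misses a logarithm. Using Corollary~1.7 of \cite{duminil2022planar}, the $\beta$-derivative of a crossing probability at mesoscopic scale $R=2^{-k}n$ is bounded below by $c\bigl(R^2\Delta_p(R)+\sum_{l=R}^{n} l\,\Delta_p(l)\Delta_p(R,l)\bigr)$, and with $\Delta_p(R)\asymp R^{-1}$ for $q=2$ the sum over $l$ contributes an extra factor $\log(n/R)\asymp k$. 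Thus the mean shift per square is $\asymp k\,2^{-k}$, giving a total shift $\asymp k\,2^{k}$ (i.e. $\delta^{-1}\log(1/\delta)$). Meanwhile the off-diagonal covariance sum, controlled by the same mixing rate $\Delta_p(r,R)$ and its quasi-multiplicativity, makes $\var[Z_k]\asymp 2^{2k}k$, so the standard deviation is only $\asymp 2^k\sqrt{k}$. The ratio of mean shift to standard deviation is therefore $\asymp\sqrt{k}\to\infty$: a \emph{single} mesoscopic scale does separate the measures with probability tending to $1$. One then gets genuine mutual singularity not by telescoping scales or by switching to rarer arm events, but simply by Chebyshev (Corollary~\ref{c.main}), Borel--Cantelli along a sparse sequence $k=m^2$, and defining $A:=\bigcup_N\bigcap_{m\ge N}\{Z_{m^2}\le\E^0[Z_{m^2}]+c_1 2^{m^2}m^2\}$ as in \eqref{e.eventA}.

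A second, smaller gap: the indicator $\mathbf 1_{C_Q^\delta}$ is not a bounded continuous functional on $(\calH,d_\calH)$ --- quad-crossing events have nonempty boundaries --- so one cannot simply assert that the discrete mean/variance estimates ``survive passage to any subsequential scaling limit.'' The paper first proves the FK analogue of Schramm--Smirnov's Corollary~5.2 (Proposition~\ref{pr.SS} and Corollary~\ref{c.quads}), which says $\P^\lambda[\partial\boxminus_Q]=0$ for quads $Q$ away from $\partial[-1,1]^2$ and requires the near-critical stability of the one-arm event from \cite[Thm.\ 1.4]{duminil2022planar}. Only then can the derivative and covariance estimates be computed discretely and transferred to the continuum limit. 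Your proposal correctly lists RSW stability and decorrelation as inputs, but omits this continuity-of-crossings step, which is what actually licenses passing from discrete to continuum.
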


\begin{remark}
The proof below will in fact detect the singularity by relying only on the non-coloured FK percolation $\omega:=\omega^+ \cup \omega^- \in \calH$ (where $\cup$ is a natural measurable function on $\calH$). Yet, we prefer to state it this way since, at least on the discrete level, the coupling $(\omega_n^+, \omega_n^-)$ contains all the information from $\sigma_n$ (which is not the case for the single FK percolation $\omega_n$). 
\end{remark}

\ni
{\em Proof of Theorem \ref{th.mainI}.}
$ $

We will follow the same idea as for Proposition \ref{pr.singI} except we will introduce mesoscopic scales and we shall detect the singularity by observing all these mesoscopic scales ``simultaneously'' (in a suitable sense). 

For any dyadic scale $\eps=\eps_k:=2^{-k}$, we consider the mesoscopic grid $\eps_k \Z^2 \cap [-1,1]^2$, which consists in
$4\cdot 2^{2k}$  squares of side-length $\eps_k$.   Let $\calQ_k$ be the set of {\em quads} corresponding to the $\eps_k$ squares contained in $[-\tfrac 1 2 , \tfrac 1 2]^2$ and which detect an horizontal crossings (in \cite{schramm2011scaling}, a {\em quad} is a topological rectangle, here a square, together with two arcs, here the left and right sides of the square). Note that as in Proposition \ref{pr.singI}, we will need to work at macroscopic distance away from boundary conditions in order to rely on \cite{duminil2022planar}.  

Let $(\omega^+,\omega^-)$ be any subsequential scaling limit of coloured FK-Ising percolation at $\beta=\beta_c + \frac \lambda n$ and let $\omega = \omega^+ \cup \omega^-$ be the un-coloured FK subsequential scaling limit. 
We shall denote these two limiting measures by $\P^\lambda$ and $\P^0$ (and their expectations by $\E^\lambda$ and $\E^0$). We thus have, for a certain subsequence $\{n_\ell\}_{\ell \geq 1}$,
\begin{align*}\label{}
\P^{FK}_{\beta=\beta_c+\frac \lambda {n_\ell}} \overset{\text{in law in }(\calH,d_\calH)}\longrightarrow \, \P^\lambda\,\,\text{ as } \,\, \ell\to\infty \,. 
\end{align*}

As in the discrete setting, let us consider for each $k\geq 1$ the following mesoscopic random variables:
\begin{align}\label{e.Zk}
Z_k=Z_k(\omega):= \sum_{Q\in \calQ_k \,\, \big(\text{ i.e. squares in } [-\tfrac 1 2, \tfrac 1 2]^2  \big) } 1_{Q \in \omega}\,.
\end{align}
(Following \cite{schramm2011scaling}, an element $\omega \in \calH$ is a set of {\em quads} satisfying some closedness and compatibility conditions. The notation $Q\in \omega$ means that the quad $Q$ is traversed by the (continuum) percolation configuration $\omega$). 
As such, $Z_k$ is a measurable function on $(\calH,d_\calH)$ (see \cite{garban2018scaling}) and $Z_k$ is counting the number of $\eps_k$ squares with a left-right crossing event inside $[-\tfrac 1 2 , \tfrac 1 2 ]^2$.

We will construct a measurable event $A$ in~\eqref{e.eventA} at the end of this proof which will spot the singularity of $\P^\lambda$ w.r.t $\P^0$. We first need to prove a few intermediate results. 


To start with, we will crucially need the following analog of Corollary 5.2 from \cite{schramm2011scaling}
\begin{proposition}\label{pr.SS}
For any $\lambda\in \R$, let $\P^\lambda = \lim_{\ell \to \infty} \P^{FK}_{\beta_c + \frac \lambda {n_\ell}}$ be any of the above subsequential scaling limits. Then for any topological quad $Q$ at macroscopic distance from the boundary $\p [-1,1]^2$,
\begin{align*}\label{}
\FK{\beta_c + \frac \lambda {n_\ell} }{FK}{Q \in \omega_{n_\ell}} \underset{\ell \to \infty}\longrightarrow
\FK{}{\lambda}{Q \in \omega} 
\end{align*} 
(N.B. In the notations of \cite{schramm2011scaling}, this corresponds to $\FK{}{\lambda}{\partial \boxminus_Q}=0$). 
\end{proposition}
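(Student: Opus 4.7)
The plan is to follow the strategy of Corollary 5.2 of \cite{schramm2011scaling}, replacing the critical RSW input by the near-critical uniform RSW estimates of \cite{duminil2022planar}. Recall that $\boxminus_Q = \{Q \in \omega\}$ is a closed sub-basic event in $(\calH, d_\calH)$, with open interior $\boxdot_Q \subset \boxminus_Q$ corresponding to \emph{strict} crossings. By the Portmanteau theorem applied to the closed event $\boxminus_Q$ and the open event $\boxdot_Q$, one has $\limsup_\ell \P^{FK}_{\beta_c+\lambda/n_\ell}[\boxminus_Q] \leq \P^\lambda[\boxminus_Q]$ and $\liminf_\ell \P^{FK}_{\beta_c+\lambda/n_\ell}[\boxminus_Q] \geq \liminf_\ell \P^{FK}_{\beta_c+\lambda/n_\ell}[\boxdot_Q] \geq \P^\lambda[\boxdot_Q]$, so the convergence reduces to showing that the topological boundary $\partial\boxminus_Q = \boxminus_Q \setminus \boxdot_Q$ has zero $\P^\lambda$-mass.

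To this end, for $\delta > 0$ I would introduce the shrunken quad $Q_\delta^-$ (arcs of $Q$ pushed a distance $\delta$ inward along the crossing direction) and the dilated quad $Q_\delta^+$ (arcs pushed $\delta$ outward). These satisfy $\boxminus_{Q_\delta^-} \subset \boxdot_Q$ and $\boxminus_Q \subset \boxdot_{Q_\delta^+}$, together with the continuity-of-the-topology identities $\boxdot_Q = \bigcup_{\delta > 0} \boxminus_{Q_\delta^-}$ and $\boxminus_Q = \bigcap_{\delta > 0} \boxdot_{Q_\delta^+}$. Monotone convergence combined with the two Portmanteau inequalities (open set bound for $\boxdot_{Q_\delta^+}$, closed set bound for $\boxminus_{Q_\delta^-}$) then yield, for each $\delta > 0$,
\[
\P^\lambda[\partial \boxminus_Q] \;\leq\; \P^\lambda[\boxdot_{Q_\delta^+}] - \P^\lambda[\boxminus_{Q_\delta^-}] \;\leq\; \liminf_\ell \P^{FK}_{\beta_c + \lambda/n_\ell}\bigl[\boxminus_{Q_\delta^+} \setminus \boxminus_{Q_\delta^-}\bigr].
\]

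It remains to bound the discrete near-pivotal event $\boxminus_{Q_\delta^+} \setminus \boxminus_{Q_\delta^-}$ by a power of $\delta$, \emph{uniformly in $n_\ell$}. At mesh $1/n_\ell$, this event forces the existence of a vertex $x$ in an $O(\delta)$-neighborhood of the arcs of $Q$ carrying a three-arm event (two primal FK-arms and one dual arm) reaching the macroscopic scale of $Q$; a union bound over a $\delta$-net of such points yields a total probability of order $\delta^2$ times a three-arm probability from microscopic to macroscopic scale. Thanks to the near-critical RSW, quasi-multiplicativity and arm-event stability established in \cite{duminil2022planar}, all valid uniformly for base points at macroscopic distance from $\partial[-1,1]^2$ (which is precisely why Proposition~\ref{pr.SS} is restricted to such quads), this three-arm probability is bounded by a negative power of the macroscopic scale times a constant depending only on $\lambda$, so that $\P^{FK}_{\beta_c+\lambda/n_\ell}\bigl[\boxminus_{Q_\delta^+} \setminus \boxminus_{Q_\delta^-}\bigr] \leq C(Q) \,\delta^\alpha$ for some $\alpha > 0$ uniformly in $\ell$. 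Sending $\ell \to \infty$ and then $\delta \downarrow 0$ gives $\P^\lambda[\partial \boxminus_Q] = 0$, hence the claimed convergence. The main obstacle is to verify precisely that the three-arm-to-the-boundary-of-a-quad estimate is a clean consequence of the results of \cite{duminil2022planar}; once this input is granted, the rest of the argument is a direct adaptation of the critical Schramm--Smirnov scheme.
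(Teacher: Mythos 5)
Your Portmanteau reduction --- showing $\P^\lambda[\partial\boxminus_Q]=0$ via shrunken/dilated quads and a uniform-in-$\ell$ bound on $\P^{FK}_{\beta_c+\lambda/n_\ell}\bigl[\boxminus_{Q_\delta^+}\setminus\boxminus_{Q_\delta^-}\bigr]$ --- is the right scheme and is the same skeleton as in Schramm--Smirnov. Where you part ways with the paper, and where a genuine gap appears, is in the way you prove the uniform estimate: you invoke only a three-arm event near the arcs of $Q$. The near-pivotal event for a quad is not controlled by three-arm events alone. Along the interior of each arc the relevant event is a three-arm event, but near each of the four corners of $Q$ (where crossing and non-crossing arcs meet) the relevant estimate is a two-arm event in a quarter-plane-like geometry. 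Your sketch never mentions corners. As the paper explicitly warns, the full-plane three-arm exponent is universal (Aizenman's argument, value $2$, and stable under the near-critical perturbation), but the corner two-arm exponent is model-dependent and also depends on the local geometry of the corner; a uniform near-critical corner bound for FK-Ising is not among the estimates supplied by \cite{duminil2022planar}, so this step is missing rather than merely ``to be verified.''

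The paper avoids this issue altogether by following the proof of Corollary~5.2 in \cite{schramm2011scaling} through their Appendix~A.1, whose only probabilistic input is the decay of the full-plane \emph{one-arm} event. The required uniform-in-$\ell$ one-arm decay (for points at macroscopic distance from $\partial[-1,1]^2$) is then exactly the ``Stability of the one-arm event,'' Theorem~1.4 of \cite{duminil2022planar}. To repair your proof you would either need a uniform near-critical two-arm-in-a-corner bound for FK-Ising (nontrivial), or you should switch to the one-arm-only argument the paper uses.
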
 

\ni
{\em Proof.} 
We follow the same proof as in \cite{schramm2011scaling} and adapt it to the setting of critical FK-Ising percolation instead of $q=1$ critical percolation. One bound is straightforward as the event $\{ Q \in \omega \}$ is compact in $(\calH,d_\calH)$ (see \cite{schramm2011scaling, garban2018scaling}). This ensures
\begin{align*}\label{}
\limsup_{\ell \to \infty} \FK{\beta_c + \frac \lambda {n_\ell} }{FK}{Q \in \omega_{n_\ell}}  \leq 
\FK{}{\lambda}{Q \in \omega}\,.
\end{align*} 
The other bound requires more work. One classical way to prove such a continuity statement under deformations of the topological quads is to argue via {\em three-arm exponents} along the edges and {\em two-arms exponents} near the corners. The three-arm exponent as discovered by Aizenman is universal and its value (2) also applies to critical (and near-critical) FK in the bulk. But the two-arm exponent is model dependent (and also depends on the local geometry of the corner). Fortunately, the proof provided in \cite{schramm2011scaling} (via Appendix A.1) is very robust for our present setting as it only relies on the decay of the plane {\em one arm-event}. We thus claim that the only ingredient missing is the decay of the {\em one-arm event} for the near-critical discrete measure $\P_{n_\ell}^{FK}$ at macroscopic distances from the boundary and uniformly in $\ell$. This is precisely the {\em Stability of the one-arm event} property from Theorem 1.4  in \cite{duminil2022planar}.  
\qed

Using the fact that topological boundaries satisfy $\p(A\cap B)\subset \p A \cup \p B$ and $\p(A\cap B) \subset \p A \cup \p B$, we readily obtain:
\begin{corollary}\label{c.quads}
With the same setup of as in the above proposition, for any quads $Q_1,\ldots,Q_k$ at macroscopic distance from $\p[-1,1]^2$, we have 
\begin{align*}\label{}
\FK{\beta_c + \frac \lambda {n_\ell} }{FK}{Q_i \in \omega_{n_\ell},\, \forall 1 \leq i \leq k} \underset{\ell \to \infty}\longrightarrow
\FK{}{\lambda}{Q_i  \in \omega , \, \forall 1 \leq i \leq k}\,. 
\end{align*} 
\end{corollary}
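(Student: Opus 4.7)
The plan is a direct application of the portmanteau theorem, using Proposition \ref{pr.SS} to promote pointwise boundary-continuity at a single quad to joint continuity at finite intersections of such events.

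First, recall that for each topological quad $Q$, the event $E_Q := \{Q \in \omega\}$ is a closed subset of the compact metric space $(\calH, d_\calH)$ (this is part of the Schramm--Smirnov construction). Consequently the event of interest,
\begin{equation*}
E := \bigcap_{i=1}^{k} E_{Q_i} = \{Q_i \in \omega,\; \forall\, 1 \leq i \leq k\},
\end{equation*}
is closed as well. Proposition \ref{pr.SS} tells us precisely that each $E_{Q_i}$ is a $\P^\lambda$-continuity set, i.e.\ $\P^\lambda[\partial E_{Q_i}] = 0$ for every quad $Q_i$ at macroscopic distance from $\p [-1,1]^2$ (this is the rephrasing $\P^\lambda[\partial \boxminus_{Q_i}] = 0$ invoked just after the proposition).

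Next I would combine these via the elementary set-theoretic inclusion $\partial(A \cap B) \subset \partial A \cup \partial B$ (and likewise $\partial(A \cup B) \subset \partial A \cup \partial B$), which iterates to
\begin{equation*}
\partial E = \partial\Bigl(\bigcap_{i=1}^{k} E_{Q_i}\Bigr) \subset \bigcup_{i=1}^{k} \partial E_{Q_i}.
\end{equation*}
Union bounding then gives
\begin{equation*}
\P^\lambda[\partial E] \leq \sum_{i=1}^{k} \P^\lambda[\partial E_{Q_i}] = 0,
\end{equation*}
so $E$ is itself a $\P^\lambda$-continuity set.

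Finally, since $\P^{FK}_{\beta_c + \lambda/n_\ell} \to \P^\lambda$ weakly in $(\calH, d_\calH)$ by assumption, the portmanteau theorem applied to the continuity set $E$ yields
\begin{equation*}
\FK{\beta_c + \lambda/n_\ell}{FK}{Q_i \in \omega_{n_\ell},\; \forall i} \; \xrightarrow[\ell \to \infty]{} \; \FK{}{\lambda}{Q_i \in \omega,\; \forall i},
\end{equation*}
which is the claim. There is no real obstacle here beyond being careful that Proposition \ref{pr.SS} is exactly the statement of boundary-negligibility; the hint about $\partial(A\cap B)\subset \partial A \cup \partial B$ given in the excerpt confirms this is the intended route.
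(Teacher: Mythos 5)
Your argument is correct and is exactly the route the paper takes: the text immediately preceding the corollary invokes $\p(A\cap B)\subset \p A\cup \p B$ to deduce the statement from Proposition \ref{pr.SS}, which is precisely the boundary-negligibility/portmanteau reasoning you spell out. No gaps; you have simply written out the details that the paper leaves as a one-line remark.
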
  

The Lemma below gives quantitative estimate on the fluctuations of $Z_k=Z_k(\omega)$. 

\begin{lemma}\label{l.0}
There exists $\lambda_0>0$ such that for any fixed $\lambda \in [-\lambda_0,\lambda_0]$ (including $\lambda=0$ which is the critical measure), 
\begin{align*}\label{}
\mathrm{Var}_{\P^\lambda}\big[\frac {Z_k(\omega)} {2^k} \big] \asymp k\,.
\end{align*}
Furthermore, the constants involved in $\asymp$ are uniform over $\lambda \in [-\lambda_0,\lambda_0]$.
\end{lemma}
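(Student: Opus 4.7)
The plan is to reduce Lemma \ref{l.0} to its discrete analog via Corollary \ref{c.quads}, and then to carry out an energy-field variance computation in the spirit of Lemma \ref{l.varE}, with the covariance estimates of \cite{duminil2022planar} as the core input.

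First I would apply Corollary \ref{c.quads} to every pair $(Q_1,Q_2) \in \calQ_k \times \calQ_k$, for $k$ fixed. Because the joint law of the finite indicator family $(1_{Q \in \omega_{n_\ell}})_{Q \in \calQ_k}$ is determined by the probabilities of its intersection events, this yields joint convergence in distribution of this vector towards $(1_{Q \in \omega})_{Q \in \calQ_k}$. Since $Z_k/2^k$ is a bounded deterministic function of these finitely many indicators, this lifts to convergence of variances,
\[
\mathrm{Var}_{\P^\lambda}\bigl[Z_k(\omega)/2^k\bigr] \;=\; \lim_{\ell \to \infty} \mathrm{Var}_{\P^{FK}_{\beta_c + \lambda/n_\ell}}\bigl[Z_k(\omega_{n_\ell})/2^k\bigr].
\]
It therefore suffices to prove the asymptotics $\asymp k$ on the discrete side, uniformly in $\ell$ large enough (say $n_\ell \gg 2^k$) and in $\lambda \in [-\lambda_0,\lambda_0]$.

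Next, I would expand the discrete variance as the diagonal sum $2^{-2k}\sum_{Q} \mathrm{Var}(1_{Q\in \omega_{n_\ell}})$ plus the off-diagonal sum $2^{-2k}\sum_{Q_1 \neq Q_2} \mathrm{Cov}(1_{Q_1 \in \omega_{n_\ell}}, 1_{Q_2 \in \omega_{n_\ell}})$. The stability of RSW on the near-critical window (established in \cite{duminil2022planar}) gives $\P[Q\in \omega_{n_\ell}] \asymp 1$ uniformly, so the diagonal sum contributes $\Theta(1)$. For the off-diagonal part, the target at Euclidean distance $r \in [\eps_k, 1]$ is the mesoscopic analog of the FK-energy two-point function,
\[
\mathrm{Cov}\bigl(1_{Q_1\in\omega_{n_\ell}},1_{Q_2\in\omega_{n_\ell}}\bigr) \;\asymp\; \bigl(\eps_k/r\bigr)^{2}.
\]
Combining $\sim \eps_k^{-2}$ choices for $Q_1$ with $\sim r\, dr/\eps_k^2$ choices for $Q_2$ at distance in $[r,r+dr]$, one then gets
\[
2^{-2k}\sum_{Q_1 \neq Q_2} \mathrm{Cov} \;\asymp\; 2^{-2k} \cdot \eps_k^{-2}\int_{\eps_k}^{1}\frac{dr}{r} \;\asymp\; k,
\]
which matches both the required upper and lower bounds. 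Uniformity in $\lambda$ then reduces to uniformity of the inputs from \cite{duminil2022planar}, which is precisely the content of their stability results.

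The main obstacle is establishing the two-sided covariance estimate above at the mesoscopic scale. The upper bound should follow from the $1/|e_1-e_2|^2$ decay of the FK edge correlator (Theorem 1.13 of \cite{duminil2022planar}), via a Russo-type differentiation of $\P[Q_1, Q_2 \in \omega]$ together with a summation over pairs of edges $(e_1, e_2) \in Q_1 \times Q_2$. The matching lower bound is the delicate point: FKG gives $\mathrm{Cov} \geq 0$ for free, but a quantitative lower bound of the correct order requires producing, with probability at least $c\,(\eps_k/r)^2$, a common FK cluster that simultaneously crosses $Q_1$ and $Q_2$. A natural route is a two-arm gluing construction joining $Q_1$ and $Q_2$ through a thin near-geodesic corridor, whose cost is controlled by the FK-Ising polychromatic arm exponents; the stability of these arm exponents on a compact near-critical window (Theorem 1.4 of \cite{duminil2022planar}) is then what fixes the choice of $\lambda_0 > 0$ in the statement.
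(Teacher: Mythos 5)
Your reduction to the discrete model via Corollary \ref{c.quads}, the diagonal/off-diagonal decomposition of $\mathrm{Var}[Z_k]$, and the final summation $\sum_{r} (\eps_k/r)^2 \cdot (\text{\#pairs at distance }r) \asymp k$ all match the paper's proof. The genuine gap is in how you propose to establish the two-sided mesoscopic covariance estimate $\mathrm{Cov}[1_{Q_1\in\omega}, 1_{Q_2\in\omega}] \asymp (\eps_k/r)^2$, which is the technical heart. The paper does not derive this from scratch: it invokes Remarks 5.5 and 5.6 of \cite{duminil2022planar}, which state precisely that the covariance of two crossing events at scale $R_0$ separated by distance $R$ is $\asymp \Delta_p(R_0, R)^2$ where $\Delta_p$ is the \emph{mixing rate}, combined with $\Delta_p(R) \asymp R^{-1}$ (Theorem 1.13 there, valid for $R\leq L(p)$) and quasimultiplicativity of $\Delta_p$ (their Theorem 1.6). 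The choice of $\lambda_0$ is then dictated by the correlation length: one picks $\lambda_0$ so that $L(\beta_c+\lambda_0/n)\geq 4n$, guaranteeing every scale appearing in the sum stays below $L(p)$, which is what makes the two-sided estimate on $\Delta_p$ applicable and uniform.

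Your proposed replacements for this input would not close the gap. For the upper bound, a ``Russo-type differentiation of $\P[Q_1,Q_2\in\omega]$ together with a summation over pairs of edges $(e_1,e_2)\in Q_1\times Q_2$'' is not a valid way to bound $\mathrm{Cov}[1_{Q_1},1_{Q_2}]$: crossing events are highly nonlinear functionals of the edge configuration, so the covariance is not expressible as (or bounded by) a sum of edge covariances $\mathrm{Cov}[\omega_{e_1},\omega_{e_2}]$, nor does Russo's formula (which differentiates in $p$) produce a covariance of two fixed events at fixed $p$. The standard mechanism relating such covariances to $\Delta_p$ goes instead through comparison of $\P[Q_1\mid Q_2]$ with $\P[Q_1]$ via wired versus free boundary conditions on an annulus separating the quads---this is what \cite{duminil2022planar} develops. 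For the lower bound, constructing a common FK cluster crossing both $Q_1$ and $Q_2$ with probability $\gtrsim (\eps_k/r)^2$ would give a lower bound on the \emph{joint probability} $\P[Q_1\cap Q_2]$, not on the covariance $\P[Q_1\cap Q_2]-\P[Q_1]\P[Q_2]$; extracting the latter requires an additional argument showing this contribution genuinely exceeds the product, which you do not supply and which is nontrivial. The $\Delta_p$-based statement from \cite{duminil2022planar} is the device that sidesteps both obstructions, and the paper's proof is essentially an assembly of those results rather than a from-scratch covariance computation.
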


\ni
{\em Proof of Lemma \ref{l.0}.}

Using  Theorem 1.13 from \cite{duminil2022planar}, we have that the correlation length statisfies
\begin{align*}\label{}
L(p) \asymp |p-p_c|^{-1}\,.
\end{align*}
(See \cite{duminil2022planar} for the definition of the correlation length). Furthermore, $p\mapsto L(p)$ is increasing on $p\geq p_c$. This implies that we can find $\lambda_0>0$ such that uniformly in $n\geq 0$, 
\begin{align*}\label{}
L(\beta_c+ \frac {\lambda_0} n) \geq 4*n\,.
\end{align*}
(N.B. With a slight abuse of notations here, we parametrise correlation length both in terms of $\beta$ and $p$ depending on context. Also the definition of $L(p)$ in \cite{duminil2022planar} depends on a small fixed cut-off $\delta$. We choose the same cut-off as in \cite{duminil2022planar} which is chosen small enough so that all their estimates hold). 
In particular, for any $\lambda \in [-\lambda_0, \lambda_0]$ and any subsequence $n_\ell$ so that 
$\P_{\beta_c + \frac \lambda {n_\ell}}^{FK} \to \P^\lambda$, we have
\begin{align*}\label{}
L(\beta_c+ \frac {\lambda} {n_\ell}) \geq 4*n_\ell \,.
\end{align*}

\begin{remark}
Restricting our analysis to the window $[-\lambda_0,\lambda_0]$ is probably not of key importance here but this allows us to only work with crossing events of size smaller than the correlation length. In particular, we may readily apply results from \cite{duminil2022planar}. 
 \end{remark}

For $\lambda\in [-\lambda_0,\lambda_0]$, 
\begin{align*}\label{}
\mathrm{Var}_{\P^\lambda}\big[Z_k(\omega) \big]
& =   
\sum_{Q,Q'\in \calQ_k  } \EFK{}{\lambda}{\big(1_{Q \in \omega} - \EFK{}{\lambda}{1_{Q \in \omega}} \big)
\big( 1_{Q \in \omega} - \EFK{}{\lambda}{1_{Q \in \omega}}  \big) }  \\
& = \sum_Q  \mathrm{Var}_{\P^\lambda}\big[ 1_{Q \in \omega}  \big]  + \sum_{Q,Q'}\mathrm{Cov}_{\P^\lambda}
\big[ 1_{Q \in \omega}, 1_{Q' \in \omega}  \big] 
\end{align*}
Using the stability of RSW (\cite{duminil2022planar}) as well as Proposition \ref{pr.SS}, we obtain that the left diagonal term is $\asymp 2^{2k}$ (i.e. the number of $\eps_k$ squares). 

For the right term, we first rely on Corollary \ref{c.quads} to work with a discrete lattice. Now, in the discrete case, these covariances can be controlled thanks to Remarks 5.5 and 5.6 in \cite{duminil2022planar}.  (In particular Remark 5.5. allows us to work with arbitrary boundary conditions at macroscopic distance from the crossing events considered). Namely for any quads $Q,Q' \in \calQ_k$ at euclidean distance at least $3 2^{-k}$ from each other, we obtain in the discrete and uniformly in $\ell$, 
\begin{align*}\label{}
\mathrm{Cov}_{\P^{FK}_{\beta_c+ \frac{\lambda}{n_\ell}}}
\big[ 1_{Q \in \omega}, 1_{Q' \in \omega}  \big] 
& \asymp \Delta_{p_c(q=2) + \Omega(\frac \lambda {n_\ell})}(\frac{n_\ell} {2^k}, \mathrm{dist}(Q,Q'))^2\,, 
\end{align*}
where the distance between $Q$ and $Q'$ is measured in the graph-distance and where $\Delta_p(r,R)$ is the main quantity introduced in the work \cite{duminil2022planar}. It is called the {\em mixing rate} between radii $r<R$ and is defined as follows (we give both the one radius and the two radii definitions, a quasi-multiplicativity statement realting both is proved in \cite{duminil2022planar}):
\begin{align*}\label{}
\begin{cases}
& \Delta_p(R):= \phi^1_{\Lambda_R,p}\big[ \omega_e\big] - \phi^0_{\Lambda_R,p}\big[ \omega_e\big] \\
& \Delta_p(r,R):= \phi^1_{\Lambda_R,p}\big[ \calC(\Lambda_r)] - \phi^0_{\Lambda_R,p}\big[ \calC(\Lambda_r)\big]\,,  
\end{cases}
\end{align*}
where $\phi^1,\phi^0$ are the FK-measure at $(q=2,p)$ with resp wired/free boundary conditions around $\Lambda_R$ and $\calC(\Lambda_r)$ is the left-right crossing event in $\Lambda_r$. 
See \cite{duminil2022planar} for more details.

Now, when $q=2$ (FK-Ising), Theorem 1.13 from \cite{duminil2022planar} states that 
\begin{align*}\label{}
\Delta_p(R) \asymp R^{-1}
\end{align*}
for every radius $R \leq L(p)$. (N.B. This is where our compact window $[-\lambda_0,\lambda_0]$ enters to ensure $R \leq L(p)$). 

Together with the quasimultiplicativity property for $\Delta_p$ which is proved for all $1<q \leq 4$ in Theorem 1.6 in \cite{duminil2022planar}, we obtain
 
\begin{align*}\label{}
\Delta_{p_c(q=2) + \Omega(\frac \lambda {n_\ell})}(\frac{n_\ell} {2^k}, \mathrm{dist}(Q,Q')) \asymp \frac{n_\ell} {2^k \dist(Q,Q')}\,.
\end{align*}
Given these estimates, it now readily follows that 
\begin{align*}\label{}
\sum_{Q,Q', \mathrm{dist}(Q,Q') \geq 3 2^{-k} n_\ell}
\mathrm{Cov}_{\P^{FK}_{\beta_c+\frac \lambda {n_\ell} }}
\big[ 1_{Q \in \omega}, 1_{Q' \in \omega}  \big]  & \asymp \sharp\{ Q, Q\in \calQ_k\} \cdot \sum_{j=1}^k 2^{2j} \frac 1 {2^{2j}} \\
& \asymp 2^{2k} \, k
\end{align*}
where $2^{2j}$ is, up to constants, the number of $\eps_k$ squares $Q'\in \calQ_k$ at graph distance $2^{j-k} n_\ell$ from $Q$. These estimates are uniform in  $\lambda \in [-\lambda_0, \lambda_0]$   and along the subsequence $n_\ell$ (N.B. the subsequence may need to depend on the value of $\lambda$ to ensure convergence).
 
Using Corollary \ref{c.quads} plus the fact that the near-diagonal terms $\mathrm{dist}(Q,Q') \leq 3 2^{-k}n_\ell$ contribute less than $O(1) 2^{2k}$, we conclude about the subsequential scaling limit $\P^\lambda$ that 
\begin{align*}\label{}
\mathrm{Var}_{\P^\lambda}\big[\frac {Z_k(\omega)} {2^k} \big] \asymp k\,.
\end{align*}
\qed  

\begin{lemma}\label{l.1}
For any $\lambda > 0$, there exists a constant $c=c(\lambda)>0$ such that 
\begin{align*}\label{}
\E^\lambda\big( \frac{Z_k} {2^k}\big) - \E^0\big(\frac{Z_k}{2^k}\big) \geq c  k . 
\end{align*}
\end{lemma}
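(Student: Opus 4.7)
The plan is to follow the Russo--Margulis differentiation strategy underlying the discrete Lemma \ref{l.mean}, lifted to the mesoscopic crossing variable $Z_k$, and to locate where the extra factor $k$ enters. First, Corollary \ref{c.quads} applied quad-by-quad (each $Q \in \calQ_k$ sits at macroscopic distance from $\partial [-1,1]^2$) reduces the claim to the uniform discrete lower bound
\[
\sum_{Q \in \calQ_k} \bigl(\P^{FK}_{\beta_c+\lambda/n_\ell}[\calC(Q_{n_\ell})]-\P^{FK}_{\beta_c}[\calC(Q_{n_\ell})]\bigr)\ \geq\ c(\lambda)\,k\,2^{k},
\]
valid for $\ell$ large, followed by a passage $\ell\to\infty$. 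Writing each discrete difference as $\int_{\beta_c}^{\beta_c+\lambda/n_\ell}\tfrac{d}{d\beta}\P^{FK}_\beta[\calC(Q_{n_\ell})]\,d\beta$, the standard FK differentiation identity gives
\[
\frac{d}{d\beta} \P^{FK}_\beta[\calC(Q_{n_\ell})]\ =\ \frac{2}{p(\beta)}\sum_{e\in\Omega_{n_\ell}}\cov^{FK}_\beta\bigl(\omega_e,\,1_{\calC(Q_{n_\ell})}\bigr),
\]
a sum of non-negative terms by FKG, so the task becomes lower-bounding this sum uniformly on the window $\beta\in[\beta_c,\beta_c+\lambda/n_\ell]$.

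The main quantitative input, in the same spirit as the two-scale mixing rate $\Delta_p(r,R)\asymp r/R$ from Theorem 1.13 of \cite{duminil2022planar} exploited in Lemma \ref{l.0}, will be the cross-scale covariance estimate
\[
\cov^{FK}_\beta\bigl(\omega_e,\,1_{\calC(Q_{n_\ell})}\bigr)\ \asymp\ \frac{R_k}{\max(d,R_k)^{2}},\qquad R_k:=n_\ell\,2^{-k},
\]
where $d$ is the lattice distance from $e$ to $Q_{n_\ell}$. Interpreting $\omega_e$ as a ``microscopic'' observable and $1_{\calC(Q_{n_\ell})}$ as a ``mesoscopic'' crossing observable at scale $R_k$, this is the natural two-scale generalization of the single-scale estimate $\cov(1_{\calC(Q)},1_{\calC(Q')})\asymp (R_k/d)^{2}$ used in the variance computation; inside $Q_{n_\ell}$ it matches the pivotality scaling $R_k^{-\alpha_4^{FK}} = R_k^{-1}$ coming from $\nu=1$ and the quasi-multiplicative structure of $\Delta_p$ (Theorem 1.6 of \cite{duminil2022planar}). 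Obtaining a matching \emph{lower} bound (and not just an upper one) is, I expect, the main technical obstacle; its derivation should combine the mixing rate, quasi-multiplicativity, FKG positivity, and the stability of RSW throughout the near-critical window.

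Granted this covariance bound, the summation closes by a dyadic decomposition of the edges around $Q_{n_\ell}$: the interior contributes $R_k^2\cdot R_k^{-1}=R_k$, and each dyadic shell at distance $d\in\{R_k,2R_k,\ldots,n_\ell\}$ contains $\asymp d^2$ edges, each with covariance $\asymp R_k/d^2$, hence contributes $\asymp R_k$. Summing over the $\log_2(n_\ell/R_k)=k$ dyadic scales gives
\[
\frac{d}{d\beta}\P^{FK}_\beta[\calC(Q_{n_\ell})]\ \gtrsim\ k\,R_k,
\]
which is precisely the logarithmic anomaly $\iint_{\Lambda_R}|x-y|^{-2}\,dx\,dy\asymp R^2\log R$ of the Ising energy reappearing at the level of crossing observables. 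Integrating in $\beta$ over an interval of length $\lambda/n_\ell$ yields a per-quad increase $\gtrsim \lambda\,k\,R_k/n_\ell = \lambda\,k\,2^{-k}$; summing over the $\asymp 2^{2k}$ quads of $\calQ_k$, dividing by $2^k$, and passing to the limit $\ell\to\infty$ via Corollary \ref{c.quads} delivers $\E^\lambda(Z_k/2^k)-\E^0(Z_k/2^k)\gtrsim c(\lambda)\,k$, which is the claim.
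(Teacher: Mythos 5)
Your proof matches the paper's: both reduce to the discrete setting via Proposition~\ref{pr.SS}/Corollary~\ref{c.quads}, lower-bound $\tfrac{d}{du}\,\P^{FK}_{\beta_c+u}\big[Q\in\omega_{n_\ell}\big]$ through the mixing rate $\Delta_p$ and its quasi-multiplicativity to extract the logarithmic anomaly $k\cdot 2^{-k} n_\ell$, then integrate over the near-critical window of width $\lambda/n_\ell$ and sum over the $\asymp 2^{2k}$ quads. The cross-scale covariance lower bound that you flag as the main technical obstacle is precisely what Corollary~1.7, Lemma~5.3 and Remark~5.5 of \cite{duminil2022planar} supply (packaged as a two-sided $\asymp$ estimate for the derivative of the crossing probability), so there is no gap; the paper additionally reduces first to $\lambda\in(0,\lambda_0]$ by FK monotonicity so that all scales involved remain below the correlation length.
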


\ni
{\em Proof.} 

Using the monotonicity properties of FK-Ising percolation, it is enough to prove this Lemma for any $\lambda\in(0,\lambda_0]$. 

For each square $Q\in \calQ_k$ contributing to the random variable $Z_k$, we may get back to the discrete using Proposition \ref{pr.SS} which gives us. 
\begin{align*}\label{}
\begin{cases}
&\FK{\beta_c }{FK}{Q \in \omega_{n}} \underset{n \to \infty}\longrightarrow
\FK{}{0}{Q \in \omega} \\
& \FK{\beta_c + \frac \lambda {n_\ell} }{FK}{Q \in \omega_{n_\ell}} \underset{\ell \to \infty}\longrightarrow
\FK{}{\lambda}{Q \in \omega} 
\end{cases}
\end{align*}
The first convergence is the critical one and thus does not require a suitable subsequence. In particular the first limit also holds under the subsequence $\{n_\ell\}$. (N.B. This step would be troublesome if one would wish to prove the singularity of $\P^{\lambda_1}$ and $\P^{\lambda_2}$ for $0<\lambda_1<\lambda_2$, see  Remark \ref{r.lam} below). 

We are thus left with obtaining a good enough lower bound (uniform in $\ell \geq 1$) for 
\begin{align*}\label{}
\FK{\beta_c + \frac \lambda {n_\ell} }{FK}{Q \in \omega_{n_\ell}}  - 
\FK{\beta_c }{FK}{Q \in \omega_{n_\ell}}\,.
\end{align*}

%
%
One can control the above difference thanks to Corollary 1.7, Lemma 5.3 and Remark 5.5  from \cite{duminil2022planar} which shows that for quads $Q$ at macroscopic distance from the boundary $\p[-1,1]^2$ and of size smaller than the correlation length, then in our present notations, for any $0 \leq u \leq \frac {\lambda} {n_\ell}$,

\begin{align*}\label{}
\frac d {du} \FK{\beta_c + u }{FK}{Q \in \omega_{n_\ell}} \asymp R^2 \Delta_{p=p_c(2) + \Omega(u)}(R) 
+ \sum_{l=R}^{L(\beta_c+u)} l \Delta_p(l) \Delta_p(R,l)\,,
\end{align*}
where $R$ is the scale of the quad $Q$, which in our case is $\frac {n_\ell} {2^k}$. $L(\beta_c + u)$ scales like  $ u^{-1}$.   Since $u\leq \frac{\lambda_0}{n_\ell}$, we are always working below the correlation length by our choice of $\lambda_0$. 
Using the above estimates of $\Delta_p$ below the correlation length as well as the quasi-multiplicativity property of $\Delta_p(r_1,r_2)$, $r_2 \leq L(p)$, we obtain 
\begin{align*}\label{}
\frac d {du} \FK{\beta_c +  u  }{FK}{Q \in \omega_{n_\ell}} & \geq c \Big( R^2 \frac 1 R 
+ \sum_{l=R}^{n_\ell} l \frac 1 l \frac R l \Big)\,,
\end{align*}
where we used here the fact that $L(\beta_c+u) \geq n_\ell$, when $u \leq \frac{\lambda} {n_\ell} \leq \frac{\lambda_0}{n_\ell}$. 

Now the scale of our quads $Q\in \calQ_k$ is $ R = 2^{-k } n_\ell$,  which gives us
\begin{align*}\label{}
\frac d {du} \FK{\beta_c +  u  }{FK}{Q \in \omega_{n_\ell}} & \geq c \Big( 2^{-k} n_\ell 
+  2^{-k} n_\ell  \log(\frac{n_\ell}{2^{-k} n_\ell}) \\
& \geq \tilde c \,  k \,  2^{-k} n_\ell 
\end{align*}

This implies that uniformly in $\lambda \leq \lambda_0$ and uniformly in $\{n_\ell\}_\ell$, 
\begin{align*}\label{}
\FK{\beta_c + \frac \lambda {n_\ell} }{FK}{Q \in \omega_{n_\ell}}  - 
\FK{\beta_c }{FK}{Q \in \omega_{n_\ell}}
& \geq \tilde c k\, 2^{-k} n_\ell  \int_0^{\frac \lambda {n_\ell}} du  \geq \tilde c  \lambda k \, 2^{-k} 
\end{align*}

By taking the limit $\ell \to \infty$ and using the continuity result Proposition \ref{pr.SS} as well as the fact that there are $2^{2k}$ squares $Q\in \calQ_k$ contributing to $Z_k$, we obtain the desired result when $0 \leq \lambda \leq \lambda_0$:
\begin{align*}\label{}
\E^\lambda\big( \frac{Z_k} {2^k}\big) - \E^0\big(\frac{Z_k}{2^k}\big) \geq c \lambda \,  k . 
\end{align*}

\qed

\begin{remark}\label{r.lam}
We may wish to prove singularity for two near-critical subsequential scaling limits at $0 < \lambda_1 < \lambda_2$. But this step would require some further work as the subsequence for $\lambda_1$ and $\lambda_2$ may not agree (i.e. $n_{1,\ell}$ and $n_{2,\ell}$).  One possible way out would be to consider the special case where $n_2$ is a subsequence of $n_1$ but this would not be entirely satisfying. 
\end{remark}

\begin{corollary}\label{c.main}
For any $\lambda>0$, there exist  constants  $c_1=c_1(\lambda)>0$ and $c_2= c_2(\lambda)>0$ such that for any subsequential scaling limit $\P^\lambda$ we have for any $k\geq 1$, 
\begin{align*}\label{}
\FK{}{0}{Z_k \geq \EFK{}{0}{Z_k} + c_1 2^k k }  \leq   \frac { c_2} k 
\end{align*} 
and 
\begin{align*}\label{}
\FK{}{\lambda}{Z_k \leq \EFK{}{0}{Z_k} + c_1 2^k k } \leq \frac {c_2} k\,.
\end{align*} 
\end{corollary}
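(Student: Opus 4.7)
The plan is to deduce Corollary \ref{c.main} by a direct Chebyshev-type concentration argument, using Lemma \ref{l.0} as the variance input and Lemma \ref{l.1} as the mean-shift input. This is the exact continuum analog of the discrete argument used to prove Proposition \ref{pr.singI}, with the role of the scalar $\log(n)^{3/4}$ now played by $c_1 \, 2^k k$. I would proceed in three short steps.

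\smallskip
\emph{Step 1 (upper tail under $\P^0$).} Apply Lemma \ref{l.0} at $\lambda=0$ to obtain $\mathrm{Var}_{\P^0}[Z_k] \asymp 2^{2k} k$. For any $c_1>0$, Chebyshev's inequality then gives
\begin{align*}
\FK{}{0}{Z_k \geq \EFK{}{0}{Z_k} + c_1 2^k k} \;\leq\; \frac{\mathrm{Var}_{\P^0}[Z_k]}{c_1^2 \, 2^{2k} k^2} \;\leq\; \frac{C}{c_1^2 \, k},
\end{align*}
so the first inequality holds with $c_2 := C/c_1^2$.

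\smallskip
\emph{Step 2 (lower tail under $\P^\lambda$, $\lambda \in (0,\lambda_0]$).} Restrict first to $\lambda \in (0,\lambda_0]$ so that both Lemma \ref{l.0} and Lemma \ref{l.1} are available for $\P^\lambda$. Lemma \ref{l.1} provides $c(\lambda)>0$ with $\E^\lambda[Z_k] - \E^0[Z_k] \geq c(\lambda) \, 2^k k$. Choosing $c_1 := c(\lambda)/2$ in Step 1 ensures the threshold satisfies $\EFK{}{0}{Z_k} + c_1 2^k k \leq \E^\lambda[Z_k] - c_1 2^k k$, so Chebyshev combined with $\mathrm{Var}_{\P^\lambda}[Z_k] \asymp 2^{2k} k$ yields
\begin{align*}
\FK{}{\lambda}{Z_k \leq \EFK{}{0}{Z_k} + c_1 2^k k}
\;\leq\; \FK{}{\lambda}{Z_k \leq \E^\lambda[Z_k] - c_1 2^k k}
\;\leq\; \frac{\mathrm{Var}_{\P^\lambda}[Z_k]}{c_1^2 \, 2^{2k} k^2} \;\leq\; \frac{C(\lambda)}{k}.
\end{align*}

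\smallskip
\emph{Step 3 (extension to $\lambda > \lambda_0$).} For $\lambda > \lambda_0$, I would argue by monotonicity. In the discrete, FK-Ising is stochastically monotone in $\beta$, so $\P^{FK}_{\beta_c + \lambda/n}$ FK-dominates $\P^{FK}_{\beta_c + \lambda_0/n}$. The event $\{Q \in \omega\}$ is closed and increasing in $(\calH, d_\calH)$, so passing along a joint convergent subsequence, this stochastic domination persists in the limit: $\P^\lambda$ FK-dominates $\P^{\lambda_0}$. Since $Z_k = \sum_{Q \in \calQ_k} 1_{Q \in \omega}$ is an increasing functional of $\omega$, the event $\{Z_k \leq \EFK{}{0}{Z_k} + c_1 2^k k\}$ is decreasing, and its $\P^\lambda$-probability is bounded above by its $\P^{\lambda_0}$-probability, which was controlled in Step 2. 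Hence the constants $c_1 = c(\lambda_0)/2$ and $c_2 = C(\lambda_0)$ also work for all $\lambda > \lambda_0$.

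\smallskip
No step should be genuinely difficult: the two Lemmas already contain all of the hard FK-Ising analysis (via \cite{duminil2022planar}). The only mildly delicate point is the passage of stochastic domination to subsequential scaling limits in Step 3, which is standard for monotone closed events on $(\calH, d_\calH)$ and may require extracting a common subsequence along which both $\P^{FK}_{\beta_c + \lambda/n}$ and $\P^{FK}_{\beta_c + \lambda_0/n}$ converge — a harmless adjustment since the statement quantifies over arbitrary subsequential limits.
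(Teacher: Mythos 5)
Your proposal is correct and follows essentially the same route as the paper: Chebyshev (the paper says ``Markov'' but means the same) applied to the variance bound of Lemma \ref{l.0} and the mean-shift of Lemma \ref{l.1} for $\lambda\in(0,\lambda_0]$, then FK stochastic monotonicity to reduce the case $\lambda>\lambda_0$ to $\lambda_0$. Your Step 3 spells out the passage of the discrete monotonicity inequality to the subsequential limits a bit more carefully than the paper does (which simply writes $\P^\lambda\leq\P^{\lambda_0}$ on the decreasing event), but the underlying mechanism — discrete FK domination plus convergence of quad-crossing probabilities via Corollary \ref{c.quads} along a further-extracted common subsequence, which is harmless since Step 2 applies to every subsequential limit at $\lambda_0$ — is what the paper implicitly relies on.
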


\ni
{\em Proof.}
When $0< \lambda \leq \lambda_0$,  this follows easily using Markov together with Lemmas \ref{l.0} and \ref{l.1}, indeed:
\begin{align*}\label{}
\FK{}{0}{Z_k \geq \EFK{}{0}{Z_k} + \frac{c(\lambda)} 2 2^k k }  \leq  \frac{O(1)}{c(\lambda)^2}  \frac{k}{k^2} \leq \frac {c_1} k 
\end{align*} 
and 
\begin{align*}\label{}
\FK{}{\lambda}{Z_k \leq \EFK{}{0}{Z_k} + \frac{c(\lambda)} 2 2^k k }  \leq  \frac{O(1)}{c(\lambda)^2}  \frac{k}{k^2} \leq \frac {c_2} k\,,
\end{align*} 
where $c(\lambda)>0$ is the constant from Lemma \ref{l.1}.

If $\lambda > \lambda_0$, then we loose our control on the variance from Lemma \ref{l.0}, but using monotonicity of FK, we still obtain using the above bounds at $\lambda=\lambda_0$, 
\begin{align*}\label{}
\FK{}{0}{Z_k \geq \EFK{}{0}{Z_k} + \frac{c(\lambda_0)} 2 2^k k }  \leq  \frac{O(1)}{c(\lambda_0)^2}  \frac{k}{k^2} \leq \frac {c_2} k 
\end{align*} 
and 
\begin{align*}\label{}
\FK{}{\lambda}{Z_k \leq \EFK{}{0}{Z_k} + \frac{c(\lambda_0)} 2 2^k k } &  \leq  
\FK{}{\lambda_0}{Z_k \leq \EFK{}{0}{Z_k} + \frac{c(\lambda_0)} 2 2^k k } \\
& \leq \frac{O(1)}{c(\lambda_0)^2}  \frac{k}{k^2} \leq \frac {c_2} k\,,
\end{align*} 
\qed

We are now ready to define an event detecting the singularity of the limiting measure $\P^0$ and $\P^\lambda$ for any $\lambda>0$ (the proof is the same when $\lambda<0$).

Fix $\lambda>0$ and let $\P^\lambda$ be any subsequential scaling limit at $\beta=\beta_c+\frac \lambda n$ under boundary conditions a),b),c)  on $\p [-1,1]^2$.
Define the event $A$ to be 
\begin{align}\label{e.eventA}
A:=  \bigcup_{N\geq 1} \bigcap_{m\geq N}  \left\{\omega \in \calH, \,  Z_{m^2}(\omega)  \leq \EFK{}{0}{Z_{m^2}}  + c_1(\lambda) 2^{m^2} m^2 \right\}\,.
\end{align}
Notice that $A$ is measurable w.r.t to the quad topology induced by $(\calH,d_\calH)$ from \cite{schramm2011scaling}.

Using Corollary \ref{c.main} together with Borel-Cantelli, we get 
\begin{align*}\label{}
\FK{}{0}{A} = 1
\end{align*}
while on the other hand, for any $m\geq 1$, we know by the same Corollary \ref{c.main} that 
\begin{align*}\label{}
\FK{}{\lambda}{\omega \in \calH, \,  Z_{m^2}(\omega)  \leq \EFK{}{0}{Z_{m^2}}  + c_1(\lambda) 2^{m^2} m^2}
\leq  \frac{c_2} {m^2}\,,
\end{align*}
which ensures as desired that 
\begin{align*}\label{}
\FK{}{\lambda}{A} = 0\,.
\end{align*}
\qed

\subsection{Singularity under $\CLE_3$ scaling limit ?}\label{ss.CLEp}
As stated in Proposition \ref{pr.tight}, we may also consider near-critical subsequential scaling limit under the loop topology from \cite{camia2006two}. At $\beta=\beta_c$, the critical Ising measure is known to converge under this topology towards the nested $\CLE_3$ limit (\cite{benoist2019scaling} or \cite{miller2017cle} together with \cite{kohler2022fuzzy}). 
For $\lambda>0$, we may consider a subsequential scaling limit along $\{n_\ell\}$.
Our main results for Ising hints that such a near-critical $\CLE_3$ is likely to be singular w.r.t the critical $\CLE_3$.  Yet, as it was pointed to us by Paul Cahen, this is by far not an easy consequence of our main Theorem \ref{th.mainI}. 
\smallskip

Let us also highlight that the case of a single near-critical curves is very interesting as well. For spin-Ising, the near-critical $\SLE_3$ is discussed in the work \cite{makarov2010off} and is believed to be absolutely continuous with respect to $\SLE_3$. The analogous case of $\kappa=2$ with a massive version of $\SLE_2$, arising from massive loop erased random walks, is proved to be absolutely continuous w.r.t $\SLE_2$ in the work \cite{chelkak2021convergence}.

\subsection{On the singularity of near-critical $\SLE_{16/3}$ ?}\label{ss.NW}
For near-critical percolation ($q=1$), the first proof of singularity for near-critical subsequential scaling limits appeared in  \cite{nolin2009asymmetry} before the proof of uniqueness of near-critical scaling limits in \cite{garban2013pivotal, garban2018scaling}. 

In terms of singularity, it proved a stronger (more descriptive) result identifying that near-critical interfaces happen to be singular w.r.t the critical interfaces which are known to be $\SLE_6$ curves. In \cite{garban2018scaling} an easier proof of singularity of the entire percolation configurations is provided and this is the type of event which inspired the above proof for FK-Ising percolation ($q=2$). 

One may then wonder whether near-critical FK-Ising interfaces are singular w.r.t $\SLE_{16/3}$ which, if so, would  give another proof of singularity. Interestingly, it seems it is not the case. We state it as a conjecture and give a heuristical argument below in favour of this.  

\begin{conjecture}
Let $\lambda\neq 0$ be fixed. Consider any subsequential scaling limit for the FK-Ising percolation associated to $\beta=\beta_c+\frac \lambda n$ in the domain $\frac 1 n \Z^2 \cap [-1,1]^2$ with Dobrushin free/wired boudnary conditions on left and right sides of $\p [-1,1]^2$. 

Then, the limiting interface $\gamma$ between the primal cluster connected to the wired arc and the dual cluster connected to the free arc is absolutely continuous w.r.t to the $\SLE_{16/3}$ measure. (We may only focus on the subsequential scaling limit of the interface itself or otherwise proceed as in \cite{holden2023convergence} to extract $\gamma$ out of the percolation configuration $\omega\in\calH$). 
\end{conjecture}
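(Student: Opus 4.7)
My approach is a Girsanov argument at the level of the Loewner driving function of the interface. Parametrising the discrete interface $\gamma_n$ by half-plane capacity, let $\xi_n(t)$ be its driving function. At $\beta=\beta_c$, the convergence result of \cite{chelkak2012universality} gives $\xi_n \Rightarrow \sqrt{16/3}\,B_t$. The goal is to show that under $\mu_n^\lambda$, $\xi_n$ converges to a continuous semimartingale of the form $\sqrt{16/3}\,B_t + \int_0^t D_s\,ds$ with an adapted drift $D_s$ satisfying Novikov's condition; the conclusion then follows from Girsanov's theorem.

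The first step is to identify and estimate the discrete drift. Locally at the tip, the temperature perturbation $\lambda/n$ biases which edge $\gamma_n$ traverses at each exploration step. Using the same derivative machinery from \cite{duminil2022planar} that powers Lemma \ref{l.1}, one should establish that the infinitesimal mean displacement of $\xi_n$ per unit capacity at time $s$ is of order $\lambda \cdot \Psi(\gamma_{[0,s]}^n)$, where $\Psi$ is a bounded functional of the local geometry at the tip, encoding a limiting ``energy density'' near the tip. Taking the scaling limit, this yields the candidate limiting drift $D_s = \lambda\,\Psi(\gamma_{[0,s]})$.

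The second step is to verify the Novikov condition $\mathbb{E}\bigl[\exp\bigl(\tfrac12 \int_0^T D_s^2\,ds\bigr)\bigr]<\infty$ under the $\SLE_{16/3}$ law. This is where the $q=2$ versus $q=1$ dichotomy enters: the drift at time $s$ scales like a power of the conformal radius $|g_s'|^{-1}$ at the tip, with exponent governed by the near-critical sensitivity of FK-Ising (a combination of the one-arm and correlation-length exponents obtained in \cite{duminil2022planar}). The integrand behaves like $|g_s'|^{2\alpha}$ for a model-dependent $\alpha$, and one expects the resulting integral to be $\SLE_\kappa$-integrable precisely when $\kappa$ is small enough. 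For $\kappa=16/3$ integrability is expected to hold (absolute continuity), whereas for $\kappa=6$ it fails (yielding the singularity proved in \cite{nolin2009asymmetry}). This exponent comparison is, in our view, the heuristic reason behind the conjecture.

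The main obstacle will be step one: rigorously identifying the limiting drift requires joint convergence of $\xi_n$ together with a ``massive energy observable'' at the tip under the near-critical measure, and a proof that the finite-variation part of the limiting semimartingale coincides with $\int D_s\,ds$. This should be achievable by combining the massive fermionic observables of \cite{park2022convergence} with a Kemppainen--Smirnov tightness argument and the near-critical stability of RSW from \cite{duminil2022planar}, but carrying out all the required estimates in the near-critical regime is highly non-trivial, and would also give as a by-product the uniqueness of the near-critical interface scaling limit.
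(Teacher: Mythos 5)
This statement is a \emph{conjecture} in the paper, and what the paper provides is explicitly labelled a heuristic (``Arguments in favour of this conjecture''), not a proof. With that caveat in mind, your proposal is a reasonable plan of attack, but it takes a genuinely different route.

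The paper argues at the \emph{discrete turn-counting} level: the critical exploration path through a domain of scale $n$ makes $n^{5/3+o(1)}$ steps; conditionally on the past, each step is a biased coin with bias bounded away from $0$ and $1$, yielding a left/right fluctuation of order $\sqrt{n^{5/3+o(1)}}=n^{5/6+o(1)}$; the temperature shift $\lambda/n$ adds a per-step bias of $O(1/n)$ uniformly (via the FK opening probability formula), so the total drift is $O(n^{2/3+o(1)})$, and since $n^{2/3}\ll n^{5/6}$ the drift is swamped by the fluctuations. This is essentially a CLT-scale comparison of cumulative drift against cumulative noise. You instead set up a \emph{Girsanov / Loewner driving function} argument: identify the limiting drift $D_s$ on the driving process and verify Novikov under $\SLE_{16/3}$. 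These are the discrete and continuum sides of the same coin: your drift $\Psi(\gamma_{[0,s]})$, after reparametrisation by half-plane capacity, is exactly what the paper's $O(1/n)$-per-step bias should integrate to, and your Novikov criterion is the continuum formalisation of the paper's ``drift $\ll$ fluctuation'' comparison. Your approach buys the correct structural formulation of what an actual proof would require (a bona fide drift and a Novikov bound); the paper's buys a lighter-weight sanity check that requires no Loewner machinery and isolates the key exponent arithmetic ($2/3 < 5/6$ at $q=2$ versus $1 > 7/8$ at $q=1$). Both acknowledge the same gaps: the paper flags that the $\sqrt{n^{5/3}}$ fluctuation order is not rigorous and that other events could in principle detect a singularity; you flag that identifying the limiting drift rigorously would require joint convergence of the driving function with a massive tip observable in the near-critical regime. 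Neither closes the conjecture, and your claim that the integrand scales like $|g_s'|^{2\alpha}$ with SLE$_{16/3}$-integrable $\alpha$ is, like the paper's fluctuation estimate, an unverified guess at this stage.

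One point worth flagging: your step one asserts that the local bias at the tip contributes a drift of the form $\lambda\,\Psi(\gamma_{[0,s]})$ with $\Psi$ \emph{bounded}. In Loewner time this cannot be literally bounded --- the conversion from arclength/step count to half-plane capacity introduces a factor blowing up like a power of $|g_s'|$ at the tip, which is precisely what makes your step two nontrivial. As written, steps one and two are slightly in tension (a bounded $D_s$ would make Novikov trivial), and you should phrase the drift estimate in capacity parametrisation from the start.
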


\ni
{\em Arguments in favour of this conjecture:}
The main idea behind the proof in \cite{nolin2009asymmetry} that near-critical percolation interfaces are singular w.r.t $\SLE_6$ path measure is the observation that the near-critical interface will ``turn to the right'' more often than what the centred $\SLE_6$ is allowed to do by the central-limit theorem. This is easier to identify at the discrete level than in the continuous limit (and in fact, similarly as what we do in this paper, most of the work from \cite{nolin2009asymmetry} is to translate the discrete singularity observation to the limiting continuous setting). 

In the case of critical $q=1$ site percolation, the discrete critical exploration process is visiting $n^{7/4+o(1)}$ sites (on a piece of triangular lattice of diameter $n$). 
Each new site visited away from the boundary  by the exploration process is a (unbiased) Bernoulli variable. This allows a fluctuation of $n^{7/8+o(1)}$ left/right terms around the mean (which in a symmetric domain is 0).  
Now, by the stability of the two-arm event in the near-critical window, it is not hard to check that the discrete near-critical interface is also made of $n^{7/4+o(1)}$ sites. 
The difference is that now, each new visited site (away from the boundary which is not visited much) is a biased Bernoulli coin with bias the near-critical drift, namely $\delta p = \lambda n^{-3/4+o(1)}$ when $q=1$. This implies that the number of right turns will exceed the number of left turns by $n^{1+o(1)}$ which is much larger than the fluctuation $n^{7/8+o(1)}$. 
As shown in \cite{nolin2009asymmetry}, this discrepancy still prevails in the continuum for all possible subsequential scaling limits. 

If we now consider the critical FK-Ising percolation interface. It is known that at criticality, it is made of $n^{5/3+o(1)}$ steps. 
(The dimension $5/3$ is obtained via $1+\frac \kappa 8$, with $\kappa=16/3$). See \cite{beffara2008dimension,wu2018polychromatic}. 
 By the stability of the two-arm event in the near-critical window (Theorem 1.4 and Remark 7.2 in \cite{duminil2022planar}), this estimate still holds for the near-critical interface. 

The fluctuation of left/right turns is less immediate than for $q=1$ percolation as we are no longer summing  i.i.d variables. 
Yet, at each step, given the previous steps, we are adding a new coin whose conditional bias is uniformly bounded away from 0 and 1. In great generality one cannot exclude possible subtle {\em localisation phenomenon} such as in \cite{gurel2014localization} but we believe these do not hold here (this may be justified by relying on the decay of the mixing rate $\Delta_p(r,R)$).  This suggests that  typical fluctuations should then be at least of order $\sqrt{n^{5/3+o(1)}}$.  


On the other hand, when turning-on the near-critical temperature shift, at each step it induces a bias which can be easily seen to be upper bounded by $O(1) \frac 1 n$ uniformly on what has been revealed so far. (This is due to the FK formula $\Pb{\omega_e \text{ is open }} = p \Pb{e^+ \text{ and } e^- \text{ not connected in }e^c} + \frac p {p+ (1-p)  2} \Pb{e^+ \text{ and } e^- \text{ connected }}$). This means that one should not expect a deviation due to this temperature shift larger than  $\frac 1 n \times n^{5/3+o(1)}$. Since 
\begin{align*}\label{}
n^{2/3+o(1)} \ll \sqrt{n^{5/3+o(1)}}\,, 
\end{align*}
this suggests that the near-critical curve does not turn significantly more than the critical one. The same argument should apply in the continuum limit. Notice that  this is only a heuristical argument in favour of the conjecture, first because of the full justification of $\sqrt{n^{5/3}}$-fluctuations which is lacking and more importantly one would need to prove that there does not exist other events (than counting mesoscopic right and left turns) which may potentially spot a singularity.

\section{Near-critical Potts model, $q\in \{3,4\}$}\label{s.potts}
In this short informal Section, we wish to explain why we expect the energy fields of critical 3-Potts and 4-Potts to be well defined random Schwartz distributions in the plane, as opposed to critical Ising 2-Potts. 
Let us state the following precise conjecture which is based on the expected exponents given in \cite{duminil2022planar}.

\begin{conjecture}\label{c.q3}
FK percolation for 3-Potts and 4-Potts has near-critical scaling limits as $n\to \infty$ under the following temperature shifts
\begin{align*}\label{}
\beta^{q=3} = \beta_c^{q=3} + \frac \lambda {n^{6/5}} \text{   and  }
\beta^{q=4} = \beta_c^{q=4} + \frac \lambda {n^{3/2}} 
\end{align*}
(This follows from the expected correlation exponent $\nu=5/6$ for $q=3$ and $\nu=2/3$ for $q=4$). 

Furthermore, these near-critical scaling limits (under, say, the quad topology $\calH,d_\calH)$) are \underline{absolutely continuous} w.r.t to the critical scaling limits. (N.B. The latter one are not known to be unique until now). 
\end{conjecture}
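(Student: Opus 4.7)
The plan is to exploit the expected CFT exponents for $q\in\{3,4\}$, which differ crucially from the Ising case. The $2d$ hyperscaling relation $x_\epsilon = 2 - 1/\nu$ gives $x_\epsilon=4/5$ when $q=3$ and $x_\epsilon=1/2$ when $q=4$. In both cases $2x_\epsilon<2$, so the predicted two-point correlator $\<{\epsilon(x)\epsilon(y)}\asymp|x-y|^{-2x_\epsilon}$ is locally integrable against the product Lebesgue measure in a compact bulk region. This is precisely the structural reason why one should expect the energy field to exist as a random Schwartz distribution $\epsilon$, and thereby why the near-critical tilt should only reweight the critical measure in an absolutely continuous fashion rather than drive it to a singular one.

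The first step is to reverse the qualitative content of Lemma \ref{l.varE}: once the analogues of Theorem 1.13 and Corollary 1.7 in \cite{duminil2022planar} are established for $q\in\{3,4\}$, to prove
\begin{equation*}
\mathrm{Var}_{\mu_n^0}\!\left[n^{-1/\nu}E_n^{bulk}\right] = O(1)
\end{equation*}
uniformly in $n$. With the expected mixing rate $\Delta_p(R)\asymp R^{-(2-x_\epsilon)}$ below the correlation length and the discrete covariance $\mathrm{Cov}(\omega_{e_1},\omega_{e_2})\asymp\Delta_p(\dist(e_1,e_2))^2$, the double sum over edges in $I_n$ collapses to $O(n^{2/\nu})$, exactly cancelled by the normalisation $n^{-2/\nu}$. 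A higher-moment variant of the same calculation, fed by the CFT $k$-point asymptotics for $\<{\epsilon(x_1)\dots\epsilon(x_k)}$, should then give convergence in law of $n^{-1/\nu}E_n^{bulk}$ to the integrated energy field $\<{\epsilon,\mathbf{1}_I}$.

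Next, one upgrades this $L^2$ control to a uniform exponential moment bound
\begin{equation*}
\sup_n \E_{\mu_n^0}\!\left[\exp\!\left(\lambda n^{-1/\nu}E_n^{bulk}\right)\right] < \infty \qquad \text{for } |\lambda|\le\lambda_0.
\end{equation*}
Given this, the lattice Radon-Nikodym density $d\mu_n^\lambda/d\mu_n^0(\sigma)\propto\exp(\lambda n^{-1/\nu}E_n^{\mathrm{full}}(\sigma))$ has a uniformly integrable bulk factor that converges, in $L^1$ along any subsequential scaling limit, to the strictly positive random density $\exp(\lambda\<{\epsilon,\mathbf{1}_I})/Z$. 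A Girsanov-type argument then identifies this as $d\P^\lambda/d\P^0$ when restricted to events measurable with respect to configurations inside $I$, which gives the claimed absolute continuity and, as a by-product, reduces the question of uniqueness of the near-critical scaling limit to that of the critical one.

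The main obstacle is the exponential integrability bound. For $q=3$ with $x_\epsilon=4/5$ it should follow from a dyadic decomposition of $E_n^{bulk}$ into scale contributions and a multiscale Chernoff argument fed by the variance estimate above. The case $q=4$ is marginal: $x_\epsilon=1/2$ sits at the threshold where a Gaussian/Wick approximation breaks down, and the natural route is a hierarchical renormalisation group analysis in the spirit of Sections \ref{s.SG}--\ref{s.Phi}, applied now directly to $\log\E_{\mu_n^0}[\exp(\lambda n^{-1/\nu}E_n^{bulk})]$ in order to control its large-deviation tails. A secondary technical point is the handling of the boundary contribution $E_n^{\mathrm{full}}-E_n^{bulk}$, which lies outside the explicit scope of \cite{duminil2022planar}; this can likely be absorbed by a separate RSW/cutoff argument decoupling bulk and boundary after passing to slightly shrunken domains.
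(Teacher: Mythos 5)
You are addressing a \emph{conjecture}, and the paper does not prove it: it offers two heuristic arguments. Your roadmap develops the second of these (the two-point correlator $\langle\epsilon(z_1)\epsilon(z_2)\rangle\asymp|z_1-z_2|^{-2x_\epsilon}$ is locally integrable for $q\in\{3,4\}$ since $x_\epsilon<1$, so the energy field should exist as a random distribution and the near-critical tilt should be an absolutely continuous reweighting) in considerably more concrete detail, making explicit the three steps — $L^2$ control, exponential-moment upgrade, Girsanov-type convergence of densities — that the paper only gestures at. What you do not engage with is the paper's first argument: they redo the $q=2$ singularity computation with the mesoscopic crossing count $Z_k$ and show that for $q\in\{3,4\}$ the mean shift $\E^\lambda Z_k-\E^0 Z_k$ is of the \emph{same} order as $\sqrt{\Var{Z_k}}$ rather than dominating it, so the singularity-detector from Section~\ref{s.Ising} fails. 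That is a useful ``null-result'' heuristic complementing yours, and it is closer in spirit to the machinery the paper actually has in hand (quasi-multiplicativity of $\Delta_p$, Corollary~1.7 of \cite{duminil2022planar}).

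There is also a concrete error in your exponent for the mixing rate. You write $\Delta_p(R)\asymp R^{-(2-x_\epsilon)}=R^{-1/\nu}$, but the correct scaling — and the one the paper quotes from \cite{duminil2022planar} p.~10 — is $\Delta_p(R)\asymp R^{-x_\epsilon}$ (that is $R^{-1}$, $R^{-4/5}$, $R^{-1/2}$ for $q=2,3,4$). The two formulas coincide only at the Ising point $x_\epsilon=1$, which is presumably how the confusion arose. The distinction matters for your own calculation: with $\mathrm{Cov}(\omega_{e_1},\omega_{e_2})\asymp\Delta_p(\dist)^2$, your stated exponent gives $\mathrm{Cov}\asymp\dist^{-2(2-x_\epsilon)}$, and since $2(2-x_\epsilon)>2$ for $q\in\{3,4\}$ the sum over separations converges, yielding a double sum of order $n^2$ — \emph{not} $n^{2/\nu}=n^{4-2x_\epsilon}$, which is strictly larger. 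The claimed cancellation against $n^{-2/\nu}$ then fails. The conclusion $\mathrm{Var}[n^{-1/\nu}E_n^{bulk}]=O(1)$ is nonetheless correct; it just requires the correct $\Delta_p(R)\asymp R^{-x_\epsilon}$, which gives $\mathrm{Cov}\asymp\dist^{-2x_\epsilon}$ and a double sum $\asymp n^2\cdot n^{2-2x_\epsilon}=n^{2/\nu}$ as wanted. You should fix this exponent throughout, both for internal consistency and because it propagates into your proposed mean-shift estimate and exponential-moment argument.
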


Let us justify this conjecture by two different heuristical arguments.

\ni
\underline{\em Argument 1. Counting crossing events.} 
We proceed as we did for $q=2$ and check what happens with the  same mesoscopic observable as in the Ising case, namely the variable $Z_k$ from~\eqref{e.Zk}. 
\bnum
\item First, the variance of $Z_k$ (both under the critical and near-critical limits) should scale like 
\begin{align*}\label{}
\sum_{Q,Q' \in \calQ_k} 
\mathrm{Cov}_{\P^{FK}_{\beta_c+\frac \lambda {n^{1/\nu(q)}} }}\big[ 1_{Q \in \omega}, 1_{Q' \in \omega}  \big]   \asymp 2^{4k}  \Delta^{(q)}_{p_c(q)}(2^{-k} n, n)^2 
\end{align*}
When $q=3$, it is expected (\cite{duminil2022planar} p10) that $\Delta_{p_c}(R) \asymp R^{-4/5}$ and when $q=4$ that 
$\Delta_{p_c}(R) \asymp R^{-1/2}$. Using the quasi-multiplicativity of $\Delta_p(r,R)$ which is rigorously established in \cite{duminil2022planar} when $q\in \{3,4\}$, this implies that one should expect
\begin{align}\label{}
\begin{cases}
\Var{Z_k } \asymp (2^{k})^{\tfrac {12} 5} & \text{ when } q=3 \\
\Var{Z_k} \asymp  2^{3k}  & \text{ when } q=4 \,.
\end{cases}
\end{align}

\item Second, using Corollary 1.7  from \cite{duminil2022planar}, we know that for any $u$ in the near-critical window, for any quad $Q \in \calQ_k$ (recall the notations above~\eqref{e.Zk})  and when $q=3$: 
\begin{align*}\label{}
\frac d {du} \FK{\beta_c + u }{FK}{Q \in \omega_{n}} & \asymp R^2 \Delta_{p=p_c(3) + \Omega(u)}(R) 
+ \sum_{l=R}^{L(\beta_c+u)} l \Delta_p(l) \Delta_p(R,l) \\
& \asymp R^{6/5} + \sum_{l=R}^n  l \cdot  l^{-4/5} (R/l)^{4/5} \\
& \asymp R^{6/5} + R^{4/5} n^{2/5} \leq R^{4/5}n^{2/5}
\end{align*}
(where $R$ denotes the mesoscopic scale $\frac n {2^k}$).

By integrating these derivatives along the near-critical window, we thus expect the following shift of expectations when $q=3$:
\begin{align*}\label{}
\E^\lambda\big( Z_k \big) - \E^0\big( Z_k \big) & \leq 2^{2k} \times \int_{\beta_c(3)}^{\beta_c(3)+ \lambda n^{-6/5}}  
(2^{-k} n)^{4/5} n^{2/5} du \\
& \leq 2^{2k} 2^{-\tfrac 4 5 k} = 2^{\tfrac 6 5 k}
\end{align*}
which is of the same order as $\sqrt{\Var{Z_k}}$.

If now $q=4$, we obtain instead
\begin{align*}\label{}
\frac d {du} \FK{\beta_c + u }{FK}{Q \in \omega_{n}} & \asymp R^2 \Delta_{p=p_c(4) + \Omega(u)}(R) 
+ \sum_{l=R}^{L(\beta_c+u)} l \Delta_p(l) \Delta_p(R,l) \\
& \asymp R^{3/2} + \sum_{l=R}^n  l \cdot  l^{-1/2} (R/l)^{1/2} \asymp   R^{1/2} n \,. 
\end{align*}
Integrating this estimate over the expected near-critical window at $q=4$ gives  
\begin{align*}\label{}
\E^\lambda\big( Z_k \big) - \E^0\big( Z_k \big) & \leq 2^{2k} \times \int_{\beta_c(4)}^{\beta_c(4)+ \lambda n^{-3/2}}  
(2^{-k} n)^{1/2} n \,  du \\
& \leq 2^{2k} 2^{-\tfrac 1 2 k} = 2^{\tfrac 3 2 k} \asymp \sqrt{\Var{Z_k}}\,. 
\end{align*}
\enum

\medskip
We now give a second heuristical argument in the spirit of the motivation of this work explained in Section \ref{ss.motivation}.

\smallskip
\ni
\underline{\em Argument 2. Existence of a critical  energy field when $q\in \{3,4\}$.} 

The 2-point correlation functions for the energy field of critical $3$-Potts and $4$-Potts are predicted in \cite{nienhuis1998critical} to be 
\begin{align*}\label{}
\begin{cases}
\<{\calE(z_1), \calE(z_2)}\sim |z_1-z_2|^{-8/5} & \text{ for } q=3 \\
\<{\calE(z_1), \calE(z_2)}\sim |z_1-z_2|^{-1} & \text{ for } q=4 
\end{cases}
\end{align*}
(N.B. The 4-point correlation functions are also computed  explicitly in \cite{nienhuis1998critical} using the Coulomb-gas approach).

Given these exponents, it seems likely that as opposed to $q=2$, the energy field should exist as a random Schwarz distribution both for $q=3$ and $q=4$. Higher order correlation functions would be needed in order to make this more plausible (and also in order to check the existence of exponential moments for these fields). 
In turn, using these fields, one would be able to define a near-critical scaling limit absolutely continuous w.r.t the critical one. As such this gives a second justification to our Conjecture \ref{c.q3}.

\section{Singularity for hierarchical Sine-Gordon model away from the $L^2$ phase}\label{s.SG}

%
%

%

Our goal in this section is to prove the singularity of the hierarchical Sine-Gordon field w.r.t the free field all the way to critical point $``\beta=8\pi"$. 

\subsection{Setup and definition of the hierarchical Sine-Gordon field.}\label{ss.setup}

The main idea of the construction of the hierarchical Sine-Gordon field and its associated RG flow was outlined in Section \ref{ss.intro}. 

Let $L\geq 2$ be an integer (for example $L=2$ will do) and $N\geq 1$ be the number of hierarchical steps. 
 We will rely on the same $2d$-hierarchical GFF as in~\eqref{e.varphiN}
 except we shall normalise the i.i.d Gaussians $\{z^{(n)}_x\}_{n\geq 0, x\in \Lambda_n}$ so that $\E z(x)^2=2\log L$ (instead of 1 in the introduction. The advantage of this normalisation is that $\beta_{L^2}$ and $\beta_{BKT}$ do not depend on $L$). 
As mentioned in the introduction, we shall run our RG flow analysis in the IR setting~\eqref{e.phiN} and will get back to the UV setting~\eqref{e.varphiN} at the end of this section to establish a singularity statement with the hierarchical field $\varphi_\hr$ on $[0,1]^2$. We will therefore use the notation $\phi$ from~\eqref{e.phiN} for the analysis of the RG flow in the next few subsections.

Consider the potential $v(\phi)=\mu\cos\sqrt{\beta}\phi$ and recall the non-linear RG flow $\calR$ introduced in~\eqref{e.RG0}.
It is given by 
 \begin{align}\label{e.RG}
v'(\phi') = (\calR v)(\phi')& := - \log \Eb{e^{- \sum_{x\in \Lambda_1} v(\phi'+z(x))}}\,,
\end{align}
where $\Lambda_1$ corresponds to the $L^2$ points which are integrated out at the smaller scale. 

\begin{remark}\label{}
Notice that the RG flow is preserving the $\frac{2\pi}{\sqrt{\beta}}$-periodicity of the potential. I.e. $v'$ is also a $\frac{2\pi}{\sqrt{\beta}}$-periodic function of $\phi'$ and so on and so forth along the RG flow. 
\end{remark}

Let us also introduce the so-called {\em linearised RG}, which is the first order expansion of the RG flow $\calR$. It  gives with our above choice of normalisation:
\begin{equation*}
\caL v(\phi'):=\E\sum_{x\in\Lambda_1}v(\phi'+z(x))=\mu'v(\phi').
 \end {equation*}
 with
\begin{equation*}
\mu'=L^{2-\beta}\mu.
 \end {equation*}
 In this normalization the Berezinskii-Kosterlitz-Thouless point is therefore at $\beta=\beta_{BKT}=2$.
 (N.B. Indeed as soon as $\beta\geq 2$, the RG flow is no longer super-renormalisable in the UV).

\subsection{Analysis of the RG flow.}
We will analyse in this Section the above RG flow~\eqref{e.RG}. 
 To avoid dealing with too many minus signs, we will first work with negative potentials $u(\phi):=-v(\phi)$, so that the non-linear RG flow now reads as follows:
 \begin{align*}\label{}
u'(\phi') = (\bar \calR u)(\phi') & := \log \Eb{e^{ \sum_{x\in \Lambda_1} u(\phi'+z(x))} }
\end{align*}

 Recall that if $X$ is a real-valued random variable  with finite exponential moments $\Eb{e^{tX}}$, for all $t\in[-1,1]$ then it has the following cumulant expansion for any $p\geq 1$:
\begin{align*}\label{}
\log \Eb{e^X} = \sum_{l=1}^p \frac{\kappa_l}{l!} + \int_0^1 \frac{(1-t)^p}{p!} f^{(p+1)}(t) dt\,,
\end{align*}
where the function $f(t)$ is defined as 
\begin{align*}\label{}
f(t)= \log \Eb{e^{t X}}
\end{align*}
At $\phi'$ fixed, when applying this cumulant expansion to the random variable $X:=\sum_{x\in \Lambda_1} u(\phi'+z(x))$ (and dealing with the minus sign $u \to v:=-u$), we obtain the following expansion for the renormalized potential $v$ for any $p\geq 1$:

 \begin{align}
\label{Taylor2}
v'&= \calR v \\
& = \caL v+\sum_{l=2}^p \frac{(-1)^{l+1}}{l!}\langle V;V;\dots ;V\rangle^{(l)}+\frac{(-1)^{p}}{p!}\int_0^1(1-t)^{p}\langle  V;\dots ;V\rangle_t^{(p+1)}\,,
 \end {align} 
where $V$ is the random variable $\sum_{x\in \Lambda_1} v(\phi'+z(x))$,  $\<{V;V;\ldots;V}^{(l)}$ denotes the $l^{th}$ cumulant of $V$  under the Gaussian measure $\mu$ (which samples $\{z(x)\}_{x\in \Lambda_1}$) and $\langle V; V;\ldots; V\rangle^{k}_t$ is the $k^{th}$- cumulant of $V$ under the  normalized measure $e^{-tV}\mu$. 
We will rely below on this cumulant expansion of the renormalized potential for a suitable choice of the integer $p=p(\beta)$.

Let $\beta<\beta_{KT}=2$ and take

 \begin{equation}
v^{(N)}(\phi):=L^{-N(2-\beta)}\mu \cos\sqrt{\beta}\phi.
\label{vdefini}
 \end {equation}
 We want to prove that for each finite depth $n$ in the hierarchical tree, then modulo global additive constants, $\lim_{N\to\infty}v_n^{(N)}$ exists.

\begin{remark}
In this Section, as opposed to the next one, we will consider potentials as in statistical physics, namely up to translations by a global energy. I.e. we will not need to follow how the free energy is renormalized along the RG flow but only how the potential gets renormalized. This means we shall not pay attention to constants: $v(\phi)$ is identified with $v(\phi)+C$. Of course, the underlying probability measure involves a global constant via the partition function $Z_N^{(N)}$ as written in~\eqref{e.SGF2}. (See also the later  equation~\eqref{e.RN} where we only need to work modulo constants).
Even though we will not need to control how the  free energy behaves under the RG flow, one could have access to this information by following carefully enough how it flows under RG. This approach will in fact be implemented in Section \ref{s.Phi}. \end{remark}

  Let $p=p(\beta)$ be the smallest integer so that
 \begin{align}\label{e.pbeta}
p \frac   2 3 (2-\beta) >2 \,.
\end{align}
(N.B. $p(\beta=1)=4$ and $p(\beta)\to \infty$ as $\beta \to \beta_{BKT}=2$).  
 We write for each level $1\leq n \leq N$ 
  \begin{equation}\label{e.decomp}
v_n^{(N)}=\sum_{k=1}^{p}\mu_{k,n}^{(N)}c_k(\phi)+w_n^{(N)}(\phi):=u_n^{(N)}(\phi)+w_n^{(N)}(\phi)
 \end {equation}
 where for each $1\leq k \leq p$, we denote
 \begin{align*}\label{}
c_k(\phi):=\mu_{k,n}^{(N)}\cos(k\sqrt{\beta}\phi)
\end{align*}
 and $w_n^{(N)}$ will be thought of as a negligible error term. Note that the linearized RG flow satisfies 
 \begin{equation}
\caL c_k=L^{2-k^2\beta}c_k.
\label{vdefini2}
 \end {equation}
 
\begin{remark}\label{}
One may wonder why we did not choose instead the smallest integer $\hat p$ so that $\hat p(2-\beta)>2$ (which would be sufficient to counter-balance the number of leafs in $\Lambda_1$), but as we shall see below, we will need more lower order terms  in order to control to RG flow. 
\end{remark}

By relying on the above cumulant expansion of degree $p=p(\beta)$ defined in~\eqref{e.pbeta},
we will  now control inductively the RG flow:
\begin{align*}\label{}
v_{n}^{(N)} \mapsto v_{n-1}^{(N)}:= \calR v_n^{(N)}\,,
\end{align*}

\smallskip

Let
\begin{align*}\label{}
\mu_n:=L^{-n(2-\beta)}\mu\,.
\end{align*}
 We assume inductively
  \begin{equation}
 \begin{cases}
 & \mu_{k,n}^{(N)}\leq \epsilon\mu_n^{2k/3},\,  \text{ for all } 2 \leq k \leq p \\
 &  |\mu_{1,n}^{(N)}-\mu_n|\leq A\mu_n^2  \\
&  \|w_n^{(N)}\|_\infty\leq \epsilon\mu_n^{2p/3}
\end{cases}
\label{b}
 \end{equation}
 where $A$ is taken large and $\epsilon$ small depending on $L,p$.

 We shall rely on the above cumulant expansion:
 \begin{equation}
\label{Taylor2}
v'=\caL v+\sum_{l=2}^p \frac{(-1)^{l+1}}{l!}\langle V;V;\dots ;V\rangle^{(l)}+\frac{(-1)^{p}}{p!}\int_0^1(1-t)^{p}\langle  V;\dots ;V\rangle_t^{(p+1)}\,.
 \end {equation}
 Let us start by analyzing the first step i.e. $v=v^{(N)}$. Then notice that 
   \begin{equation*}
\sum_{x_i\in\Lambda_1, i=1,\dots l}\langle c_{1}(\phi_{x_1});\dots ;c_{1}(\phi_{x_l})\rangle^{(l)}=\sum_{k=0}^la_kc_{k}(\phi')
 \end{equation*}
(N.B. we are using here {\em joint cumulants} which extend the above notations and whose definition is recalled  below). 
This yields the bounds
  \begin{equation*}
|\mu_{k,N-1}^{(N)}|\leq C(L,k)\mu_N^{k},\ \ k>1, \ \ |\mu_{1,N-1}^{(N)}-\mu_{N-1}|\leq C(L)\mu_N^{2},\ \ \|w_{N-1}^{(N)}\|_\infty\leq C(L,p) \mu_N^{p+1}.
 \end{equation*}
 Hence \eqref{b} holds for $n=N-1$ with $\epsilon$ that can be taken arbitrary small as $N\to\infty$.
 
 \medskip
 For the induction step we keep relying on the cumulant expansion~\eqref{Taylor2}
except we shall now decompose $V=U+W$ according to \eqref{e.decomp}. This naturally brings us to rely on joint cumulants. Recall that if $X_1,\ldots,X_n$ are real random variables with finite exponential moments in a neighbourhood of $0$, then 
\begin{align*}\label{}
\log \Eb{e^{t_1X_1 + \ldots + t_n X_n}} = \sum_{k_1,\ldots,k_n\geq 0} \kappa_{k_1,\ldots,k_n} \frac {\prod t_i^{k_i}}{\prod k_i!}
\end{align*}
The coefficients $\kappa_{k_1,\ldots,k_n}$ are the {\em joint cumulants} and will be denoted by the bracket notation compatible with the above one. For example, for a random vector $(X_1,X_2,X_3,X_4$) we denote 
\begin{align*}\label{}
 \<{X_2; X_2 ; X_3; X_4}^{(4)}:= \kappa_{0,2,1,1}
\end{align*}
Also, the null joint cumulant is zero (i.e. $\kappa_{0,0,...,0}=0$). 
Note that this notation is consistent with the one-variable $n^{th}$ cumulant of $V$, i.e. 
\begin{align*}\label{}
\kappa_n(V)= \kappa_{n,0,\ldots,0}(V,V,\ldots,V) = \kappa_{1,1,\ldots,1}(V,V,\ldots,V) =  \<{V;V;\ldots;V}^{(n)}
\end{align*}
A basic but crucial property we shall use is the fact joint cumulants are multilinear. 
Using the  decomposition  $V=U+W = \sum_{l=1}^p U_l + W$ from \eqref{e.decomp}, we may write
\begin{align}\label{e.jointC}
v'&= \calR v \nn \\
& = \caL v+\sum_{l=2}^p (-1)^{l+1}  \sum_{k_1+\ldots+k_{p+1}=l} 
\frac {\kappa_{k_1,\ldots,k_{p+1}}(U_1,U_2, \ldots,U_p,W)}
{\prod k_i!} \\
& \,\,\,\,\,\,+(-1)^{p}  (p+1) \int_0^1 dt (1-t)^{p} \sum_{k_1+\ldots+k_{p+1}=p+1} 
\frac {\<{k_1\cdot U_1; \ldots, k_p\cdot U_p; k_{p+1} \cdot W}^{(p+1)}_{t}}
{\prod k_i!} \nn
\end{align}
where $\<{k_1\cdot U_1; \ldots, k_p\cdot U_p; k_{p+1} \cdot W}^{(p+1)}_t$ denotes the $(p+1)^{th}$ joint cumulant  $\kappa_{k_1,\ldots,k_{p+1}}(U_1,\ldots,U_p,W)$ under the joint-normalized measure $e^{-t(U_1+\ldots+U_p+W)}\mu$.
\smallskip

Recall that for each $1\leq k \leq p$, the random variable $U_k$ corresponds to  
\begin{align*}\label{}
U_k = \sum_{x\in \Lambda_1} \mu_{k,n}^{(N)} c_k(\phi' + z(x))\,.
\end{align*}

Now comes our main observation which justifies why we use the structure of joint cumulants:  indeed we claim they have the  following form for any $l\geq 1$ and any $\mathbf k = (k_1,\ldots,k_p)\in (\N)^p$ with $k_1+\ldots+k_p =l$:

  \begin{equation*}
\sum_{x_i\in\Lambda_1, i=1,\dots l}\langle  c_{1}(\phi_{x_1});\ldots ; c_{1}(\phi_{x_{k_1}}); c_{2}(\phi_{x_{k_1+1}})) ; \ldots ;  c_{p}(\phi_{x_l})\rangle^{(l)}=\sum_{k=0}^{l}a_k({\mathbf k})c_{k}(\phi')
 \end{equation*}
 where the first $k_1$ points $x_1,\ldots,x_{k_1}$ are assigned to $c_{1}$, the next $k_2$ points to $c_2$ etc. The deterministic coefficients $a_k({\mathbf{k}})$ also depend on $L$ and $\beta$. (Recall $\phi_x:=\phi'+z(x)$ where $\{z(x)\}_{x\in \Lambda_1}$ is a Gaussian process).
 One way to see why this holds is to notice that cumulants are given by explicit polynomials of the moments. Now, joint moments of the random variables $c_{j}(\phi'+\sum_x z(x))$ under the Gaussian measure $\mu$ are of the above form and this structure is easily seen to be stable under polynomials thanks to trigonometric identities.

%

 Collecting terms we obtain the following recursion
 
 
\begin{align*}
\mu^{(N)}_{k,n-1}=L^{2-k^2\beta}\mu^{(N)}_{k,n}+\sum_{l=k}^p (-1)^{l+1}\sum_{k_1+\ldots+k_p=l} 
\frac
{a_k({\mathbf k})}
{ \prod_{i=1}^p k_i !}
\prod_{i=1}^{p}(\mu^{(N)}_{i,n})^{k_i}
\end{align*}

and put all the other terms in $w^{(N)}_{n-1}$. For $k>1$ we thus get
\begin{align*}
|\mu^{(N)}_{k,n-1}|\leq L^{2-k^2\beta}\epsilon \mu_n^{2k/3}+C(L,p,k)(\mu_n^k+\epsilon\mu_n^{1+2(k-1)/3}+\epsilon^2\mu_n^{2k/3})
\end{align*}

We have
  \begin{equation*}
 \epsilon L^{2-k^2\beta}\mu_n^{2k/3}=\epsilon L^{2-k^2\beta}L^{-2k(2-\beta)/3}\mu_{n-1}^{2k/3}\leq  L^{-2/3}\epsilon \mu_{n-1}^{2k/3}
 \end{equation*}
 since $2-k^2\beta-2k(2-\beta)/3\leq 2-4\beta-8/3+4/3\beta=-2/3-8/3\beta\leq -2/3$. Thus 
 \begin{align*}
|\mu^{(N)}_{k,n-1}|\leq (L^{-2/3}+C(L,p,k)(\epsilon^{-1}\mu_{n-1}^{k/3}+\mu_{n-1}^{1/3}+\epsilon))\epsilon\mu_{n-1}^{2k/3}\leq \epsilon\mu_{n-1}^{2k/3}
\end{align*}
 
 Let next $k=1$. Then
  \begin{align*}
|\mu^{(N)}_{1,n-1}-\mu_{n-1}|\leq (L^{\beta-2}A\mu_{n-1}^2+C(L,p)(\mu_{n-1}^{2}+\epsilon^2\mu_{n-1}^{4/3}+\epsilon\mu_{n-1}\mu_{n-1}^{2/3})\leq A\mu_{n-1}^2
\end{align*}
 if $A$ is taken large enough depending on $L,p$.

 Finally, in order to control the error function $w_{n-1}^{(N)}$ we use the following facts:
 \bnum
 \item First the linearised flow gives 
\begin{equation*}
\|\caL w\|_\infty\leq L^2\| w\|_\infty
 \end{equation*}
\item Then, from the expression~\eqref{e.jointC}, the terms which contribute to  $\calL w_n^{(N)}$  coming from joint cumulants involving $\mu_{1,n}^{(N)} c_1$ and $w_n^{(N)}$ are controlled (in $\|\cdot\|_\infty$) by $C(A,p,L)\mu_n * \epsilon\mu_n^{2p/3}$.

\item The  joint cumulants in~\eqref{e.jointC}  involving higher order terms such as
$\mu_{4,n}^{(N)}c_4 * \mu_{6,n}^{(N)}c_6 * \mu_{p-2,n}^{(N)} c_{p-2}$ or $\mu_{2,n}^{(N)} c_2 * w_n^{(N)}$ contribute at most 
\begin{align*}
C(A,p,L)\epsilon^2\mu_n^{2(p+1)/3}\,.
\end{align*}
\item Finally it remains to control the contribution of the remaining integral in~\eqref{e.jointC}. These terms are also joint cumulants but against a different probability measure (i.e. the joint-normalized measure $e^{-t(U_1+\ldots+U_p+W)}\mu$). 
Yet by using the fact that joint cumulants are controlled (up to combinatorial constants) by the product of $\|\cdot\|_\infty$ norms, we find that this integral term is a function whose norm is less than $\tilde C(A,L,p) \mu_n^{\frac 2 3 (p+1)}$.
 \enum
 
 All together, one thus obtains
\begin{equation*}
\|w_{n-1}^{(N)}\|_\infty\leq  (L^2+C(A,p,L)\mu_n)\epsilon\mu_n^{2p/3}+C(A,p,L)\epsilon^2\mu_n^{2(p+1)/3}\,.
 \end{equation*}
 
 Since $L^2\mu_n^{2p/3}=L^{2-(2-\beta)2p/3}\mu_{n-1}^{2p/3}=L^{-a} \mu_{n-1}^{2p/3}$ where
 \begin{align*}\label{}
a:= (2-\beta)2p/3  - 2>0
\end{align*}
so that the $w$ bound iterates as well.

 \subsection{Convergence as $N\to \infty$ limit.}
 
In this subsection, we wish to show that each effective potential has a limit as $N\to \infty$. They are further compatible with each other in the sense of the RG flow. 
More precisely. 
$ $ \\

\begin{theorem}\label{th.conv}
For any $\beta<\beta_{BKT}=2$ and any $a>0$, the sequence of effective potentials $\{v_{n}^{(N)}\}_{1\leq n \leq N}$ which are initialised with 
\begin{align*}\label{}
v_N^{(N)} \,:  \phi \mapsto  \,\, a L^{-N(2-\beta)} \cos(\sqrt{\beta} \phi)
\end{align*}
converge (for the $\| \cdot\|_\infty$ norm on $\calC_b(\R,\R)$) as $N\to \infty$ to a sequence of effective potentials 
\begin{align*}\label{}
\{ \phi \mapsto v_{n}^{\infty,a}(\phi) \}_{n=1}^\infty \,.
\end{align*}
Furthermore, these limiting potentials satisfy the following properties:
\bi
\item[i)] They are compatible under the RG flow, i.e. for any $n\geq 1$, one has
\begin{align*}\label{}
\calR \big[ v_n^{\infty,a} \big]  = v_{n-1}^{\infty,a}
\end{align*}
where the RG flow $\calR$ was defined in~\eqref{e.RG}
\item[ii)] They can be decomposed as follows
\begin{align*}\label{}
v_n^{\infty,a}(\phi) & = \sum_{k=1}^p \mu_{k,n}^{\infty,a}  \cos(k \sqrt{\beta} \phi) + w_n^{\infty,a}(\phi)
\end{align*}
so that there exists a constant $C=C(a)<\infty$ s.t. 
\ei
\begin{align*}\label{}
\begin{cases}
& |\mu_{1,n}^{\infty,a} - \mu_n| \leq C \mu_{n}^{3/2}  \\
& |\mu_{k,n}^{\infty,a}| \leq C \mu_{n}^{2 k/3} \,\,\, \text{ for all } 2 \leq k \leq p \\
& \|w_n^{\infty,a} \|_\infty \leq C \mu_{n}^{2p/ 3}
\end{cases}
\end{align*}
(where $\mu_{n}$ is set to be $a\, L^{-n(2-\beta)}$ for the rest of this section). 
\end{theorem}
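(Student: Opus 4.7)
The plan is to establish convergence by proving that for each fixed $n\geq 1$ the sequence $\{v_n^{(N)}\}_{N\geq n}$ is Cauchy in $\|\cdot\|_\infty$ on the space of $\tfrac{2\pi}{\sqrt{\beta}}$-periodic continuous functions (taken modulo global additive constants). Once Cauchyness is shown, the two parts of the theorem follow quickly: the bounds in ii) are inherited by passing to the limit in the uniform estimates \eqref{b}, which hold with the same $A$ and $\epsilon$ for all $N\geq n$ by the inductive argument above; and the RG compatibility i) follows from continuity of the one-step map $\calR$ with respect to $\|\cdot\|_\infty$, which is immediate from the formula defining $\calR$ applied to uniformly bounded periodic potentials.

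For the Cauchy step, fix $M>N\geq n$ and use the semigroup property $v_n^{(M)}=\calR^{N-n}(v_N^{(M)})$, where $v_N^{(M)}:=\calR^{M-N}v_M^{(M)}$, to write
\begin{equation*}
v_n^{(N)}-v_n^{(M)} \;=\; \calR^{N-n}\bigl(v_N^{(N)}\bigr)-\calR^{N-n}\bigl(v_N^{(M)}\bigr).
\end{equation*}
Both inputs live in the basin defined by \eqref{b} at scale $N$: the endpoint $v_N^{(N)}=\mu_N c_1$ has a trivial decomposition, while by the inductive argument $v_N^{(M)}$ satisfies $|\mu_{1,N}^{(M)}-\mu_N|\leq A\mu_N^2$, $|\mu_{k,N}^{(M)}|\leq \epsilon\mu_N^{2k/3}$ for $k\geq 2$, and $\|w_N^{(M)}\|_\infty\leq \epsilon\mu_N^{2p/3}$. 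Mode by mode, the differences at scale $N$ are bounded by $O(\mu_N^{2k/3})$. The hard part, which I expect to be the main obstacle, is a stability estimate for the non-linear map $\calR$ applied to \emph{differences} of two potentials lying in the basin. By redoing the cumulant expansion \eqref{e.jointC} around both $v$ and $\tilde v$ and subtracting, one obtains a recursion of the schematic form
\begin{equation*}
\delta\mu_k' \;=\; L^{2-k^2\beta}\,\delta\mu_k + R_k,\qquad \|\delta w'\|_\infty \;\leq\; L^2\|\delta w\|_\infty + R_w,
\end{equation*}
where the non-linear remainders $R_k$ and $R_w$ arise from joint cumulants involving at least one factor from the basin (hence carry an extra smallness $\mu_n^{1/3}$ or better) and are therefore negligible relative to the linear transport. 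This is essentially the inductive bookkeeping already carried out for \eqref{b}, applied to differences rather than to the potentials themselves; the key point is that no new combinatorics is needed, only a careful tracking of how joint cumulants of $v - \tilde v$ with the basin potentials propagate.

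Iterating this linearised stability $(N-n)$ times, each component $\delta\mu_k$ of the perturbation at scale $N$ is multiplied by $L^{(N-n)(2-k^2\beta)}$. The worst case is the expanding mode $k=1$, which yields
\begin{equation*}
L^{(N-n)(2-\beta)}\cdot O(\mu_N^2) \;=\; O(\mu_n\,\mu_N)\;\underset{N\to\infty}\longrightarrow\;0;
\end{equation*}
the higher modes $k\geq 2$ are either contracting or give the even smaller bound $O(\mu_n\mu_N^{1/3})$; and the error part contributes $L^{2(N-n)}\mu_N^{2p/3}$, which tends to $0$ by the choice \eqref{e.pbeta} of $p(\beta)$ (the same algebra that made the $w$-bound iterate in the induction). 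Hence $\|v_n^{(N)}-v_n^{(M)}\|_\infty\to 0$ as $N\to\infty$ uniformly in $M>N$, so the sequence converges to some $v_n^{\infty,a}$. Item ii) is obtained by passing to the limit in \eqref{b}, and item i) follows from continuity of $\calR$ on uniformly bounded periodic potentials.
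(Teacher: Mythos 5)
Your high-level strategy — prove Cauchyness for $\{v_n^{(N)}\}_{N\ge n}$ via a stability estimate for the RG flow applied to differences of two basin potentials, then pass to the limit for items i) and ii) — is essentially the paper's. But there is a subtle and genuine gap in the quantitative bookkeeping, precisely at the step you flag as "the hard part."

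You compare $v_n^{(N)}$ and $v_n^{(M)}$ directly, with initial difference at scale $N$ of size $\delta\mu_1 = |\mu_{1,N}^{(M)} - \mu_N| \le A\mu_N^2$ from \eqref{b}, and then you iterate a purely linear transport $\delta\mu_1' \approx L^{2-\beta}\delta\mu_1$ to get $L^{(N-n)(2-\beta)}\cdot O(\mu_N^2) = O(\mu_n\mu_N)\to 0$. The difficulty is that the stability estimate one actually obtains from the cumulant expansion (this is the paper's Proposition \ref{pr.ErrorFlow}) does \emph{not} give the pure linear rate $L^{2-\beta}$ per step for the expanding $k=1$ mode; it propagates the \emph{relative} error $\alpha := |\delta\mu_1|/\mu^2$ unchanged, i.e. $|\delta\mu_1'|\le \alpha(\mu')^2 = \alpha L^{2(2-\beta)}\mu^2$, which is rate $L^{2(2-\beta)}$, not $L^{2-\beta}$. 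The extra margin between the two rates is exactly what is consumed in absorbing the accumulated nonlinear remainders $R_1 = O(\mu\,\delta\mu_2 + \mu^{4/3}\delta\mu_1 + \dots)$, which are only relatively $O(\mu^{1/3})$ small per step (as you note), but whose cumulative effect over $N-n$ steps is of constant order. If you plug your crude initial bound $\delta\mu_1\lesssim\mu_N^2$ (i.e. $\alpha\sim A = O(1)$) into the error-flow proposition, you land on $|\delta\mu_1^{(n)}|\lesssim A\mu_n^2$, a quantity that does not vanish as $N\to\infty$.

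The paper circumvents this by comparing \emph{consecutive} scales $v_n^{(N+1)}$ and $v_n^{(N)}$ rather than $v_n^{(N)}$ and $v_n^{(M)}$: the one-step comparison $\tilde v_N^{(N)} := \calR(v_{N+1}^{(N+1)})$ vs. $v_N^{(N)}$ has an initial $c_1$-correction that is \emph{cubic}, $O(\mu_{N+1}^3)$, because producing a $\cos(\sqrt\beta\phi)$ out of pure $\cos(\sqrt\beta\phi)$ insertions requires three factors. This makes the initial relative error $\alpha_N\sim\mu_N$, taken as $\alpha_N = L^{-\frac12(2-\beta)N}$ for comfort. Iterating Proposition \ref{pr.ErrorFlow} then preserves this vanishing $\alpha_N$, giving $\|v_n^{(N+1)}-v_n^{(N)}\|_\infty\lesssim\alpha_N$, summable in $N$, and Cauchyness follows by the triangle inequality. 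To repair your version you would either need to (a) switch to the consecutive-scales comparison and exploit the cubic (not quadratic) initial smallness, or (b) prove a genuinely sharper stability lemma than Proposition \ref{pr.ErrorFlow}, with mode-dependent rates (the $\delta\mu_2,\dots,\delta\mu_p,\delta w$ ansätze would have to be tied to the linearly-transported $\delta\mu_1$ rather than to a uniform $\alpha$). Option (b) is plausible but is more work than "only a careful tracking"; option (a) is the paper's route.
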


The main step of the proof will be the following proposition which quantifies how errors do {\em flow} under the RG flow. 
We will then use this key property to show that each fix $n\geq 1$, the sequence of functions $v_n^{(N)}$ is a Cauchy sequence in $(\calC_b(\R,\R), \| \cdot\|_\infty)$. 

\begin{proposition}[flow of erros in the RG flow]\label{pr.ErrorFlow}
Fix $\beta<\beta_{BKT}=2$. Let $p=p(\beta)$ be as in~\eqref{e.pbeta}. 
If $\mu>0$ is small enough, then for any $\alpha<1$ we have that if 
\begin{align*}\label{}
v(\phi) & = \sum_{k=1}^p \mu_k c_k(\phi) + w(\phi) \\ 
\tilde v(\phi) & = \sum_{k=1}^p (\mu_k + \delta \mu_k)c_k(\phi) + w(\phi) + \delta w(\phi) 
\end{align*}
so that 
\begin{align*}\label{}
\begin{cases}
& |\mu_{1} - \mu| \leq A \mu^{2}  \\
& |\mu_k| \leq \eps \mu^{\tfrac 2 3 k}, \,\,\, \forall  \,  2 \leq k \leq p \\
& \|\delta w \|_\infty \leq  \eps \mu^{\tfrac 2 3 p}
\end{cases}
&
\,\,\,\text{ \,\,  and } 
&
\begin{cases}
& |\delta \mu_{1}| \leq \alpha \mu^{2}  \\
& |\delta \mu_k| \leq \alpha  \mu^{ \tfrac 2 3 k} \,\,\, \text{ for all } 2 \leq k \leq p \\
& \|\delta w \|_\infty \leq  \alpha \mu^{\tfrac 2 3  p}
\end{cases}
\end{align*}
(where the constants $A$ and $\eps$ are the same as in the recursion hypothesis~\eqref{b}). 
then, if one denotes\footnote{We recall the RG flow $\calR$ was defined in~\eqref{e.RG}} $v' = \calR v$, $\tilde v' = \calR \tilde v$, and accordingly $\delta \mu'_1$, ... $\delta \mu'_p$, $\delta w'$, the above estimates propagate
with \underline{same $\alpha$} for all these quantities and with  
\begin{align*}\label{}
\mu' := L^{2-\beta} \mu\,.
\end{align*}
\end{proposition}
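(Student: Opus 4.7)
The plan is to write $\calR \tilde v - \calR v$ via a functional/cumulant expansion around $v$, isolate a linear piece coming from the linearized RG $\caL$, and show that the nonlinear remainder carries an extra small factor of $\mu$ that can be absorbed provided $\mu$ is small enough. Concretely, setting $v_s := v + s\,\delta v$ for $s \in [0,1]$, $V(\phi') := \sum_{x\in \Lambda_1} v(\phi'+z(x))$ and defining $\delta V$, $V_s$ analogously, a direct computation gives
\begin{equation*}
\calR \tilde v(\phi') - \calR v(\phi') = \int_0^1 \langle \delta V \rangle_{v_s}\, ds,
\end{equation*}
where $\langle \cdot \rangle_{v_s}$ is the tilted expectation with density $\propto e^{-V_s}$ with respect to the underlying Gaussian measure on $\{z(x)\}_{x\in\Lambda_1}$. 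Expanding $\langle \delta V\rangle_{v_s}$ in joint cumulants exactly as in~\eqref{e.jointC} produces a decomposition $\delta v' = \caL \delta v + (\mathrm{NL})$, where $(\mathrm{NL})$ is a sum of joint cumulants of order $\geq 2$, each containing at least one $\delta V$-factor and at least one $V$-factor (or strictly higher-order factors from $\delta V$).

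Next, decompose $\delta v = \sum_{k=1}^p \delta \mu_k c_k + \delta w$. Using the trigonometric identity $c_j c_l = \tfrac12(c_{j+l} + c_{|j-l|})$, every joint cumulant of products of $C_j(\phi') := \sum_x c_j(\phi'+z(x))$'s and $W(\phi') := \sum_x w(\phi'+z(x))$'s decomposes, in Fourier, as a sum over the modes $c_0,\dots, c_p$ plus an $\|\cdot\|_\infty$-bounded remainder that is pushed into $\delta w'$, exactly mirroring the Fourier bookkeeping already carried out for $v$ itself. The key point is that, by multilinearity of joint cumulants, the perturbative analysis of $\delta v$ is formally identical to that of $v$ with one factor replaced in each slot by a $\delta$-quantity.

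For the linearized contribution, $\caL c_k = L^{2-k^2\beta} c_k$ and $\|\caL \delta w\|_\infty \leq L^2 \|\delta w\|_\infty$. Comparing with the target sizes relative to $\mu' = L^{2-\beta}\mu$, we obtain
\begin{align*}
|L^{2-\beta} \delta \mu_1| & \leq L^{-(2-\beta)}\, \alpha\, (\mu')^2,\\
|L^{2-k^2\beta}\delta \mu_k| & \leq L^{-2/3}\,\alpha\,(\mu')^{2k/3},\quad k \geq 2,\\
\|L^2 \delta w\|_\infty & \leq L^{-a}\, \alpha\, (\mu')^{2p/3},
\end{align*}
with $a := \tfrac{2p}{3}(2-\beta) - 2 > 0$ by the choice~\eqref{e.pbeta} of $p = p(\beta)$. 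Hence the linear part sits strictly inside the target ball of radius $\alpha$ (relative to $\mu'$), with a definite gap in each mode. For the nonlinear contributions, applying the bounds~\eqref{b} jointly to $v$ and to $\delta v$ (with $\epsilon$ replaced by $\alpha$ in the $\delta$-slots), every nonlinear term carries at least one additional factor $\mu_j$ or $\|w\|_\infty$ beyond the $\delta$-factor, leading to estimates
\begin{equation*}
|(\delta\mu'_k)_{\mathrm{NL}}| \leq C(A,p,L)\,\mu\cdot \alpha\,(\mu')^{2k/3}, \qquad \|(\delta w')_{\mathrm{NL}}\|_\infty \leq C(A,p,L)\, \mu\cdot \alpha\, (\mu')^{2p/3}.
\end{equation*}
Taking $\mu$ small enough makes $C(A,p,L)\mu$ strictly smaller than the gap left by the linear contraction in each mode, which closes the recursion with the \emph{same} $\alpha$ as claimed.

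The main delicate point is that the linear contraction factor for the fundamental mode $k=1$ is $L^{-(2-\beta)}$, which approaches $1$ as $\beta \nearrow \beta_{BKT} = 2$; it is precisely the auxiliary remainder $\delta w'$ that needs the stronger contraction $L^{-a}$, which in turn forces the choice~\eqref{e.pbeta} of $p = p(\beta)$ to diverge at the BKT threshold. Beyond this observation, the trigonometric bookkeeping of Fourier modes is lengthy but purely mechanical, since multilinearity of joint cumulants reduces every estimate to the same small-$\mu$ combinatorics already used in establishing~\eqref{b}.
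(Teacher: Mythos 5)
Your proof is correct and follows essentially the same approach as the paper: expand the difference $\calR\tilde v-\calR v$ in joint cumulants, observe that the linear RG piece lands strictly inside the target ball (contraction factors $L^{-(2-\beta)}$, $L^{-2/3}$, $L^{-a}$ for the three types of coefficients), and show that every nonlinear joint-cumulant term carries at least one extra factor of $\mu$ to be absorbed by the contraction gap once $\mu$ is small. The main difference is organizational: you package the comparison via the interpolation $v_s=v+s\,\delta v$ and the Duhamel formula $\calR\tilde v-\calR v=\int_0^1\langle\delta V\rangle_{v_s}\,ds$, whereas the paper directly subtracts the two cumulant expansions and bounds differences of the form $\langle \tilde U_i; \tilde U_j\rangle-\langle U_i;U_j\rangle$ term by term using multilinearity; both lead to the same combinatorial estimates, and your version makes the linearization slightly more transparent.
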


\ni
{\em Proof of Proposition \ref{pr.ErrorFlow}.}
We shall only explain the main ideas as the proof follows the same expansion into joint cumulants as in the previous Section. 
In fact the previous section shows that that the hypothesis on $\mu_1, \ldots, \mu_k , w$ propagates to $\mu'_1, \ldots, \mu'_k , w'$.
We need to check that the hypothesis for the error terms $\delta \mu_1$ etc. also propagates. 
We may then write the following expansion for the RG iteration of the potential $\tilde v$ as in~\eqref{e.jointC}:
\begin{align*}
\tilde v'&= \calR \tilde v \nn \\
& = \caL \tilde v+\sum_{l=2}^p (-1)^{l+1}  \sum_{k_1+\ldots+k_{p+1}=l} 
\frac {\kappa_{k_1,\ldots,k_{p+1}}(\tilde U_1,\tilde U_2, \ldots,\tilde U_p, \tilde W)}
{\prod k_i!} \\
& \,\,\,\,\,\,+(-1)^{p}  (p+1) \int_0^1 dt (1-t)^{p} \sum_{k_1+\ldots+k_{p+1}=p+1} 
\frac {\<{k_1\cdot \tilde U_1; \ldots, k_p\cdot \tilde U_p; k_{p+1} \cdot \tilde W}^{(p+1)}_{t}}
{\prod k_i!} \nn
\end{align*}

Let us first collect the term contributing to $\delta \mu'_1$.  There is one coming from the linear RG $\calL \tilde v$ which by assumption contributes at most $L^{2-\beta} |\delta \mu_1| \leq \alpha  L^{2-\beta} \mu^{2} \ll \alpha L^{2(2-\beta)} \mu^{2} = \alpha (\mu')^{2}$ which shows we have a lot of margin on the linear RG side. 
The other contributions to $\delta \mu'_1$ arise only from the terms in 
\begin{align*}\label{}
\sum_{l=2}^p (-1)^{l+1}  \sum_{k_1+\ldots+k_{p+1}=l} 
\frac {\kappa_{k_1,\ldots,k_{p+1}}(\tilde U_1,\tilde U_2, \ldots,\tilde U_p, \tilde W)}
{\prod k_i!} 
\end{align*}
which do not involve $\tilde W$ and produce a $c_1(\phi)=\cos(\sqrt{\beta} \phi)$ function. For example if $l=2$, the smallest order contribution to $\delta \mu'_2$ is given by $\<{\tilde U_1, \tilde U_2}$. This is  because one ends up with terms such as 
\begin{align*}\label{}
& \Eb{e^{i \sqrt{\beta} (\phi'+\phi_x)} e^{-\sqrt{2\beta} (\phi'+\phi_y)} } \\
& = e^{- i \sqrt{\beta} \phi'} \Eb{ e^{i \sqrt{\beta} \phi_x -  2 i \sqrt{\beta} \phi_y } }   = \text{constant}(x,y)\,   e^{- i \sqrt{\beta} \phi'} 
\end{align*}
Now observe that 
\begin{align}\label{e.delta1}
|\<{\tilde U_1, \tilde U_2} - \<{U_1,U_2}| \leq C(L) \big[ \mu_1 |\delta \mu_2|  + |\mu_2| |\delta \mu_1| + |\delta \mu_1 | \cdot |\delta \mu_2|  \big]  \leq  \tilde C(L) \mu \mu^{4/3}
\end{align}
where the combinatorial terms $C(L), \tilde C(L)$ absorb all counting terms coming from $\sum_{z_1,z_2 \in \Lambda_1}$. The point is that one obtains a better exponent than in our assumption. As such, by making $\mu$ small enough, one can counter-balance these $L$-dependent factors.

For the other $\cos(k\sqrt{\beta} \phi)$ terms, $k\geq 2$, let us first check that the linear RG flow behaves fine. For any $2 \leq k \leq p$, it gives a contribution to $\delta \mu'_k$ bounded by 
\begin{align*}\label{}
L^{2 - k^2 \beta} |\delta \mu_k| \ll L^{\tfrac 2 3 k(2 -\beta)} \mu^{2k/3}  = (\mu')^{2k/3} 
\end{align*}
This is because for any $\beta\geq 1$ and any $k \geq 2$,
\begin{align*}\label{}
2 -k^2\beta \leq \tfrac{2k} 3 (2-\beta)\,. 
\end{align*}
For the non-linear RG terms, we will not do a systematic analysis for all terms. Let us only check the following two important ones:  $\delta \mu'_p$ and $\delta w'$. (In particular we will see that for both of them one cannot do much better than $2/3 k$ on our iterative assumption).
The lowest order terms contributing to $\delta \mu'_p$ arises from 
\begin{align*}\label{}
& |\<{ p \cdot \tilde U_1} - \<{p \cdot U_1}| =|  \kappa_p(\tilde U_1) - \kappa_p(U_1)|  \\
& \text{   and } \\
&|\<{ \tilde U_1, \tilde U_{p-1}} - \<{U_1, U_{p-1}}| =|  \kappa_p(\tilde U_1) - \kappa_p(U_1)| 
\end{align*}
This is given at most by 
\begin{align*}\label{}^^
L^{(2-\beta)*p}* (\mu_1)^{p-1} |\delta \mu_1| + L^4 (|\delta \mu_1| * |\mu_{p-1}|) \leq C(L) \mu^{2 + 2/3(p-1)} \ll \mu^{2p/3 }
\end{align*}
(if $\mu$ is chosen sufficiently small given $L$). 

The lowest order terms contributing to the function $\delta w'$ arise from the following four contributions:
\bnum
\item $ \| \calL [\delta w] \|_\infty $
\item $\|\<{ (p+1) \cdot \tilde U_1} - \<{(p+1) \cdot U_1} \|_\infty = \|  \kappa_{p+1}(\tilde U_1) - \kappa_{p+1}(U_1)\|_\infty$
\item $\|\<{ \tilde U_1, \tilde U_{p}} - \<{U_1, U_{p}}\|_\infty$
\item $\|\<{ \tilde U_1, \tilde W} - \<{U_1, W}\|_\infty$
\enum
The Linear RG term is controlled by 
\begin{align*}\label{}
 \| \calL [\delta w] \|_\infty  \leq L^2  \| \delta w \|_\infty  \leq L^2 \alpha \mu^{\tfrac 2 3 p}
\end{align*}
Using the fact that $\tfrac 2 3 p (2-\beta) > 2$ (recall the definition of $p=p(\beta)$ from~\eqref{e.pbeta}), we get 
\begin{align*}\label{}
 \| \calL [\delta w] \|_\infty  \ll  \alpha L^{\tfrac 2 3 p (2-\beta)}(\mu)^{\tfrac 2 3 p} = \alpha (\mu')^{\tfrac 2 3 p}
\end{align*}
For the second term, we notice 
\begin{align*}\label{}
\|\<{ (p+1) \cdot \tilde U_1} - \<{(p+1) \cdot U_1} \|_\infty  \leq C(L) |\delta \mu_1| |\mu_1|^{p-1}
\end{align*}
which is more than needed. The third term is controlled by $C(L) |\delta \mu_1| * |\mu_1|^{\tfrac 2 3 p}$. 
Finally the last term is upper bounded by 
\begin{align*}\label{}
C(L) \big[ |\delta \mu_1| \| w\|_\infty  + |\mu_1| \| \delta w \|_\infty \big]  \leq C(L) \mu^{2} \mu^{\frac 2 3 p} \ll (\mu')^{\tfrac 2 3 p}\,.
\end{align*}
\qed

\begin{corollary}
There exists a constant $C=C(a,L,\beta)>0$ so that for all $1 \leq n \leq N$, 
\begin{align}\label{e.PrN}
\|\phi\in \R \mapsto v_n^{(N+1)}(\phi)-v_n^{(N)}(\phi)\|_\infty\leq CL^{-\frac{1}{2}(2-\beta)N}
\end{align}

so that  $\lim_{N\to\infty}v_n^{(N)}$ exists in the space $\calC_b(\R,\R)$. 
\end{corollary}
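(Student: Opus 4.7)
The plan is to compare $v_n^{(N+1)}$ and $v_n^{(N)}$ by first aligning them at the common scale $N$ --- i.e.\ running one RG step to pass from $v_{N+1}^{(N+1)}$ to $v_N^{(N+1)} = \calR[v_{N+1}^{(N+1)}]$ --- and then invoking Proposition \ref{pr.ErrorFlow} iteratively to propagate the resulting scale-$N$ discrepancy down to the fixed scale $n$.

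For the first stage, note that $v_N^{(N)}(\phi) = \mu_N \cos(\sqrt{\beta}\phi)$ is a pure cosine (with $\mu_{k,N}^{(N)} = 0$ for $k \geq 2$ and $w_N^{(N)} \equiv 0$), so the scale-$N$ discrepancy is entirely the nonlinear RG correction produced by one step of $\calR$ applied to $\mu_{N+1}\cos(\sqrt{\beta}\phi)$. Running the base-case cumulant computation already carried out in the paper immediately after \eqref{b} gives, at scale $N$,
\begin{equation*}
|\delta \mu_1| \leq C\mu_{N+1}^{2},\qquad |\delta \mu_k| \leq C\mu_{N+1}^{k}\ \ (2 \leq k \leq p),\qquad \|\delta w\|_\infty \leq C\mu_{N+1}^{p+1},
\end{equation*}
where $\delta(\cdot)$ denotes the scale-$N$ difference between the $(N+1)$-th and $N$-th sequences.

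For the second stage, I would set $\alpha(N) := CL^{-\frac{1}{2}(2-\beta)N}$ and verify that this single parameter simultaneously majorises the three hypotheses of Proposition \ref{pr.ErrorFlow}: $|\delta\mu_1|\leq\alpha\mu_N^2$, $|\delta\mu_k|\leq\alpha\mu_N^{2k/3}$, and $\|\delta w\|_\infty \leq \alpha\mu_N^{2p/3}$. Using $\mu_{N+1} = L^{-(2-\beta)}\mu_N$, each initial ratio $\mu_{N+1}^k/\mu_N^{2k/3}$ equals $L^{-k(2-\beta)}\mu_N^{k/3}$, which is much smaller than $\alpha(N)$ for $N$ large, so every inequality has substantial slack. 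For $N$ large, $\alpha(N) < 1$ and $\mu_N$ is small enough that Proposition \ref{pr.ErrorFlow} applies to both $v_N^{(N)}$ and $v_N^{(N+1)}$ (which satisfy the a priori estimates \eqref{b} by the preceding recursion).

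Iterating Proposition \ref{pr.ErrorFlow} exactly $N-n$ times preserves $\alpha(N)$, so at scale $n$ one obtains $|\delta\mu_{1,n}|\leq\alpha(N)\mu_n^2$, $|\delta\mu_{k,n}|\leq\alpha(N)\mu_n^{2k/3}$ for $k\geq 2$, and $\|\delta w_n\|_\infty\leq\alpha(N)\mu_n^{2p/3}$. Since $\mu_n = aL^{-n(2-\beta)}$ is independent of $N$, summing the $p+1$ terms of the decomposition \eqref{e.decomp} yields $\|v_n^{(N+1)}-v_n^{(N)}\|_\infty\leq C(n,a,L,\beta,p)\,\alpha(N)$, which is exactly \eqref{e.PrN}; the telescoping bound $\|v_n^{(M)}-v_n^{(N)}\|_\infty\leq \sum_{k\geq N}CL^{-\frac{1}{2}(2-\beta)k}\to 0$ as $N\to\infty$ then exhibits $\{v_n^{(N)}\}_N$ as Cauchy in the complete space $(\calC_b(\R,\R),\|\cdot\|_\infty)$, giving the limit. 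The main delicacy is the choice of a single $\alpha(N)$ that simultaneously controls the three distinct error scales $(2,\, 2k/3,\, 2p/3)$ of Proposition \ref{pr.ErrorFlow}; the exponent $\frac{1}{2}(2-\beta)$ is deliberately loose (a sharper analysis exploiting $|\delta\mu_1^{(N)}|=O(\mu_{N+1}^3)$ --- since the first nonzero correction to the $c_1$ coefficient in the cumulant expansion comes at third order --- would improve it), but any geometrically summable rate suffices to conclude.
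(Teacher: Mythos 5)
Your overall strategy matches the paper's: compute $\tilde v_N^{(N)} := \calR[v_{N+1}^{(N+1)}]$, extract the scale-$N$ discrepancy from the cumulant expansion, propagate it to scale $n$ via Proposition \ref{pr.ErrorFlow}, and telescope. But there is a genuine gap in how you bound the discrepancy in the $c_1$ coefficient at scale $N$. You quote the bound $|\delta\mu_1|\le C\mu_{N+1}^2$ from the base-case cumulant computation and then claim that every hypothesis of Proposition \ref{pr.ErrorFlow} has "substantial slack" with $\alpha(N)=CL^{-\frac12(2-\beta)N}$; however, the hypothesis for $\delta\mu_1$ reads $|\delta\mu_1|\le\alpha\mu_N^{2}$, and the ratio you actually need to control is $\mu_{N+1}^{2}/\mu_N^{2}=L^{-2(2-\beta)}$, a constant. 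So the $O(\mu_{N+1}^2)$ bound only allows a constant $\alpha$ independent of $N$; Proposition \ref{pr.ErrorFlow} preserves $\alpha$ under iteration, hence you get a bound $\|v_n^{(N+1)}-v_n^{(N)}\|_\infty \le C_n\alpha$ that is bounded but not summable in $N$, and the telescoping argument fails. (Your displayed computation $\mu_{N+1}^k/\mu_N^{2k/3}=L^{-k(2-\beta)}\mu_N^{k/3}$ silently treats $k=1$ as if its target exponent were $2/3$ rather than $2$, which is why the difficulty is invisible.)

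The sharper estimate $|\delta\mu_1|=O(\mu_{N+1}^{3})$, which you mention only as an optional improvement in your closing parenthetical, is in fact essential and is exactly what the paper uses: starting from the pure cosine $v_{N+1}^{(N+1)}=\mu_{N+1}\cos(\sqrt{\beta}\phi)$, the second cumulant $\langle V;V\rangle$ involves only products $\cos A\cos B$ with $A,B\in\{\sqrt{\beta}(\phi'+z_x)\}$, and $\cos A\cos B=\tfrac12[\cos(A-B)+\cos(A+B)]$ produces only a $\phi'$-independent constant and a $\cos(2\sqrt{\beta}\phi')$ term --- never a $\cos(\sqrt{\beta}\phi')$ term. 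Hence the first nonlinear contribution to the $c_1$ coefficient comes from the third cumulant, giving $|\delta\mu_1|\le \tilde C(L)\mu_{N+1}^3$, and then $\tilde C(L)a^3\mu_{N+1}^3\le\mu_{1,N}^{1/2}\mu_{1,N}^2$ for $N$ large, so that $\alpha_N:=\mu_{1,N}^{1/2}\asymp L^{-\frac12(2-\beta)N}$ genuinely satisfies all three hypotheses of Proposition \ref{pr.ErrorFlow}. Once this cubic gain is built in, the rest of your argument is sound; without it, the claimed decay rate in \eqref{e.PrN} cannot be achieved and the proof does not close.
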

\proof
To prove the above estimate, we will check that $\tilde v_N^{(N)}:=\calR[ v_{N+1}^{(N+1)}]$ is close enough to $v_N^{(N)}$ in the sense of Proposition \ref{pr.ErrorFlow} with 
\begin{align*}\label{}
\alpha=\alpha_N = L^{-\frac{1}{2}(2-\beta)N}\,.
\end{align*}
Let us then consider 
\begin{align*}\label{}
v_{N+1}^{(N+1)}(\phi) = \mu_{N+1} \cos(\sqrt{\beta} \phi) \,\,\, \text{ (with $\mu_{n}$ is set to be $a\, L^{-n(2-\beta)}$)}
\end{align*}
Using the (single)-cumulant expansion~\eqref{Taylor2} (we do not need in this case the joint cumulant expansion~\eqref{e.jointC} for the first iteration), 
\begin{align*}
\tilde v_N^{(N)}&:= \calR v_{N+1}^{(N+1)}  \\
&  = \caL v_{N+1}^{(N+1)}+\sum_{l=2}^p \frac{(-1)^{l+1}}{l!}\langle v_{N+1};v_{N+1};\dots ;v_{N+1}\rangle^{(l)} \\
& \;\;\;\;\; +\frac{(-1)^{p}}{p!}\int_0^1(1-t)^{p}\langle  v_{N+1};\dots ;v_{N+1}\rangle_t^{(p+1)}\,.
\end{align*}
The linear RG term $\calL v_{N+1}^{(N+1)}$ is exactly by construction the initial potential $v_N^{(N)}$ at scale above. All other terms are affected to $\{ \mu_{k,N}\}_{1\leq k \leq N}$ and $w_N$ which are all seen here as error terms next to $v_N^{(N)}$. We need to check that each of them satisfies the assumptions of Proposition \ref{pr.ErrorFlow} with the above choice of $\alpha=\alpha_N$. 

As in the proof of Proposition \ref{pr.ErrorFlow} (see equation~\eqref{e.delta1}), the coefficient in front of $\cos(\sqrt{\beta} \phi)$ in the potential $\tilde v_N^{(N)}$ is bounded by 
\begin{align*}\label{}
\tilde C(L) \mu_{1,N+1}^3
\end{align*}
(N.B. the power three comes from the fact one needs 3 $\cos(\sqrt{\beta} (\phi_x+z+x))$ in order to produce (after averaging out $z_x,z_y,z_t$) one term $\cos(\sqrt{\beta} \phi)$. All other contributions are higher order contributions).
If $N$ is large enough, one has 
\begin{align*}\label{}
\tilde C(L) a^3 \mu_{1,N+1}^3  \leq  \mu_{1,N}^{1/2} \mu_{1,N}^2
\end{align*}
which allows us (for the coefficient in front of $c_1$) to take $\alpha_N := \mu_{1,N}^{1/2}$.

Let us analyse the second term. 
Interestingly this term has {\em less margin} in this special case than the first term. This is because one can produce a $\cos(2 \sqrt{\beta} \phi)$ out of only two $\cos(\sqrt{\beta} \phi)$. The leading contribution to $c_2$ gives at most 
\begin{align*}\label{}
C\, L^4 a^2 \mu_{1,N+1}^2 
\end{align*}
which, if $N$ is large enough is indeed much smaller (though with less margin) than
\begin{align*}\label{}
\mu_{1,N}^{1/2} \mu_{1,N}^{\tfrac 4 3}
\end{align*}
Finally we claim that, when $N$ is large enough, the higher order terms $\mu_{2,N}^{(N+1)}, \ldots, \mu_{p,N}^{(N+1)}$ and $w_N^{(N+1)}$ also satisfy the assumptions of Proposition \ref{pr.ErrorFlow} with same $\alpha_N$ and with more comfortable margin than for the second term.

By using Proposition \ref{pr.ErrorFlow}, we can propagate this control up to some finite depth $n_0$. Indeed we need $\mu$ to be sufficiently small in Proposition \ref{pr.ErrorFlow}, since we apply this proposition to $\mu=\mu_{1,n}=L^{-n(2-\beta)}$, it will work for all $n\geq n_0$ for some large enough $n_0$. 

By using the triangle inequality for $\| \cdot \|_\infty$ we easily deduce~\eqref{e.PrN} for $C= 2 \sum_{m\geq 0} L^{- m \frac 2 3 (2-\beta)}$.  
(We may then extend this estimate to all remaining $1\leq n < n_0$ possibly by increasing this constant $C$). 
 \qed
\medskip

{\em Proof of Theorem \ref{th.conv}.}
Since $(\calC_b(\R,\R), \| \cdot \|_\infty)$ is complete, the above Corollary implies that the effective potentials $v_n^{(N)}$ have a limit $v_{n}^{\infty,a} \in \calC(\R,\R)$ as $N\to \infty$. In fact the above proof provides more: it readily implies that these limiting potentials have the desired expansion stated in item $ii)$ of Theorem \ref{th.conv}. 

Finally, item $i)$ is easily checked by passing to the limit $N\to \infty$ inside  
\begin{align*}\label{}
\calR[v_n^{(N)}] = v_{n-1}^{(N)}
\end{align*}
and by noticing that the operator $\calR$ defined in~\eqref{e.RG} is continuous on the space $(\calC_b(\R,\R), \|\cdot \|_\infty)$. 
\qed

%
%

\subsection{Definition of the hierarchical Sine-Gordon measure and main statement.}

The previous section allows us to define the following {\em hierarchical Sine-Gordon measure}. 
We shall now work in the UV setting and will rely on the scaling relation~\eqref{e.SR} with the IR setting when needed.
Instead of building this measure on $H^{-\eps}(\R^2)$, we will build it on the product space
$\Omega:= \{ (z^{(n)}_{x})_{n \geq 0, \,  x\in \Lambda_n}  \} $.
For each $n\geq 1$, let $\calF_n$ be the filtration induced by the events measurable w.r.t. the first $n$ layers, i.e.
$\{
(z^{(k)}_{x})_{0 \leq k \leq n, \,  x\in \Lambda_k}  \}$. Finally let $\P^{GFF}_\hr$ be the measure used to define the corresponding hierarchical GFF as outlined in the introduction under  the product measure $\calN(0,2 \log L)^{\otimes_{k\geq 0}  \Lambda_k}$. More precisely, for any $(z^{(k)}_{x})_{k\geq 0, x \in \Lambda_k} \in \Omega$ and any $n\geq 1$, we associate as we did in~\eqref{e.varphiN} the following fields on $[0,1)^2$
\begin{align*}\label{}
\begin{cases}
& \varphi_\hr^n(x) := \sum_{k=0}^{n}  z^{(k)}_{[ L^{k}  x ]}   \\
&\varphi_\hr(x) := \sum_{k=0}^{\infty}  z^{(k)}_{[ L^{k}  x ]}    
\end{cases}
\end{align*}

\begin{definition}\label{d.SG}
For any $\beta  < \beta_{BKT}=2$ and any $a>0$, there exists a probability measure $\P^{SG}_{\hr,\beta,a}$ on fields on $\Omega$  which is such that for any finite depth $n\geq 1$ one has 
\begin{align}\label{e.RN}
\frac
{d \P^{SG}_{\hr,\beta,a}}
{d \P^{GFF}_\hr}\Big|_{\calF_n}
\propto \exp( - H_n(\varphi^n_\hr)) = \exp(- \sum_{x\in \bar \Lambda_{n}} v_n^{\infty,a}(\varphi^n_\hr(x)))\,.
\end{align}
(Where $\bar \Lambda_n$ denotes the set of points at scale $L^{-n}$ as in~\eqref{e.Lambda} and~\eqref{e.SGF2}). 

This measure $\P^{SG}_{\hr,\beta,a}$ is the limit in law as $N\to \infty$ of the GFF at depth $N$ weighted by 
\begin{align*}\label{}
 \exp\Big( -\sum_{x\in \bar \Lambda_{N}} a \,  \mu_N \cos(\sqrt{\beta} \varphi^N_\hr(x)) \Big) 
\end{align*}
 (for the topology induced by the finite depth layers $\{\varphi^k_\hr\}_{k\geq 1}$). 
\end{definition}

We may now state the main result of this Section.
\begin{theorem}\label{th.mainSG}
For any $1 \leq \beta < \beta_{BKT}=2$, and any $a\neq 0$, the hierarchical Sine-Gordon and hierarchical GFF probability measures are singular, i.e. 
\begin{align*}\label{}
\P^{SG}_{\hr,\beta, a} \perp \P^{GFF}_\hr\,. 
\end{align*}
\end{theorem}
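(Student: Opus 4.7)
The strategy is to detect singularity at each mesoscopic depth $n$, using the Radon--Nikodym characterization from Definition~\ref{d.SG}:
\[
\frac{d\P^{SG}_{\hr,\beta,a}}{d\P^{GFF}_\hr}\bigg|_{\calF_n} \;=\; \frac{e^{-H_n(\varphi^n_\hr)}}{Z_n}, \qquad H_n(\varphi) \;:=\; \sum_{x\in\bar\Lambda_n} v_n^{\infty,a}(\varphi(x)).
\]
By Theorem~\ref{th.conv}, $H_n$ is well approximated by its leading harmonic $\mu_{1,n}^{\infty,a}\sum_{x\in\bar\Lambda_n}\cos(\sqrt{\beta}\varphi(x))$ with $\mu_{1,n}^{\infty,a}=\mu_n(1+O(\mu_n^{1/2}))$ and $\mu_n=aL^{-n(2-\beta)}$. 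Thus proving singularity reduces to exhibiting events $A_n\in\calF_n$ with $\P^{GFF}_\hr(A_n)\to 1$ and $\P^{SG}_{\hr,\beta,a}(A_n)\to 0$, and concluding via Borel--Cantelli along a subsequence.

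\textbf{Step 1 (GFF mean and variance of $H_n$).} With the hierarchical covariance $\mathrm{Cov}(\varphi^n_\hr(x),\varphi^n_\hr(y))=2(h(x,y)+1)\log L$ and standard Gaussian Wick identities, a direct computation yields
\[
\E^{GFF}[H_n] \;=\; a L^{-\beta} + o(1), \qquad \mathrm{Var}^{GFF}(H_n) \;\asymp\; \sigma_n^2,
\]
where decomposing the double sum according to $j=h(x,y)\in\{0,\dots,n\}$ gives a level-$j$ contribution $\asymp a^2 L^{2(\beta-1)j}$ to the variance: the number of ordered pairs at level $j$ is $\asymp L^{2n+2(n-j)}$, the per-pair covariance of cosines is $\asymp L^{-2\beta(n-j)}$, and one tracks $\mu_n^2=a^2L^{-2n(2-\beta)}$. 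Summing gives $\sigma_n^2\asymp n$ when $\beta=1$ and $\sigma_n^2\asymp L^{2(\beta-1)n}$ when $\beta>1$, so $\sigma_n^2\to\infty$ throughout $\beta\in[1,2)=[\beta_{L^2},\beta_{BKT})$. This is the content of (what I expect to be) Lemma~\ref{l.LLN} and encodes the $L^2$ blow-up beyond $\beta_{L^2}$.

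\textbf{Step 2 (Sine--Gordon mean shift and singularity event).} The Boltzmann weight $e^{-H_n}$ biases $H_n$ towards its lower tail. The Gaussian heuristic is exact: tilting $\calN(m,\sigma^2)$ by $e^{-X}$ yields $\calN(m-\sigma^2,\sigma^2)$, shifting the mean by $-\sigma^2$ while leaving the variance unchanged. I would establish the non-Gaussian analogues
\[
\E^{GFF}[H_n] - \E^{SG}[H_n] \;\geq\; c\,\sigma_n^2, \qquad \mathrm{Var}^{SG}(H_n) \;\leq\; C\,\sigma_n^2,
\]
by expanding $\lambda\mapsto\log\E^{GFF}[e^{-\lambda H_n}]$ around $\lambda=1$ and controlling higher-order joint cumulants via the machinery of Section~\ref{ss.setup} combined with the RG bounds of Theorem~\ref{th.conv}; the fact that $v_n^{\infty,a}$ remains a bounded trigonometric sum of overall size $\mu_n$ is what keeps the higher cumulants subdominant. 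This should be the content of Lemma~\ref{l.liminf}. With these two bounds in hand, set $m_n:=\E^{GFF}[H_n]$ and
\[
A_n \;:=\; \bigl\{\, H_n(\varphi^n_\hr) \;\geq\; m_n - \tfrac{c}{2}\sigma_n^2 \,\bigr\} \;\in\;\calF_n.
\]
Chebyshev under $\P^{GFF}_\hr$ yields $\P^{GFF}_\hr(A_n^c)\leq 4/(c\sigma_n)^2$, while Chebyshev under $\P^{SG}_{\hr,\beta,a}$ combined with Step~2 yields $\P^{SG}_{\hr,\beta,a}(A_n)\leq 4C/(c\sigma_n)^2$. Extracting a subsequence $n_k$ along which both bounds are summable and setting $A:=\liminf_k A_{n_k}$, Borel--Cantelli yields $\P^{GFF}_\hr(A)=1$ and $\P^{SG}_{\hr,\beta,a}(A)=0$, which is the claimed mutual singularity.

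\textbf{Main obstacle.} The principal difficulty lies in establishing Step~2 at the level of the \emph{infinite-depth} effective potentials rather than at finite ultraviolet cutoff $N$. Example~\ref{ex.2} shows that wide GFF fluctuations of the cutoff Hamiltonian $\sum_x v_N^{(N)}(\varphi^N_\hr(x))$ can coexist with trivial limiting potentials $v_n^\infty\equiv 0$ and hence with absolute continuity of the limit measure, so a naive finite-$N$ singularity argument cannot be transported to the limit. The condition $\beta<\beta_{BKT}$ enters precisely here, via Theorem~\ref{th.conv}: it guarantees that the limiting coefficient $\mu_{1,n}^{\infty,a}=\mu_n(1+o(1))$ remains nondegenerate at every fixed scale $n$, which is what forces $\sigma_n^2$ to diverge after the ultraviolet limit is taken. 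Propagating the mean shift and the variance control from $\P^{GFF}_\hr$ to $\P^{SG}_{\hr,\beta,a}$ in the non-Gaussian setting is the technical crux, and relies on re-running the RG joint-cumulant expansion of Section~\ref{ss.setup}.
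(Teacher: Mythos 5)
Your Step 1 (the GFF variance computation, $\sigma_n^2 \asymp n$ for $\beta=1$ and $\asymp L^{2(\beta-1)n}$ for $\beta\in(1,2)$) matches Proposition~\ref{pr.moments}, not Lemma~\ref{l.LLN} as you guess; and you omit the \emph{fourth moment} bound $\E^{GFF}[H_n^4]\leq C(\E^{GFF}[H_n^2])^2$ which is an essential part of that proposition. The paper needs it to conclude, via a Paley--Zygmund-type argument, that the normalised variables $H_n/\sigma_n$ have \emph{non-degenerate} subsequential limits in law under GFF; this is what produces $u<0<v$ and $\delta>0$ with $\P^{GFF}(H_n/\sigma_n<u)\geq\delta$ and $\P^{GFF}(H_n/\sigma_n>v)\geq\delta$ along a subsequence, and the lower-tail estimate is what makes the partition function $Z_n$ large.

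Your Step 2 is where the genuine gap lies. You claim $\E^{GFF}[H_n]-\E^{SG}[H_n]\geq c\sigma_n^2$ and $\mathrm{Var}^{SG}(H_n)\leq C\sigma_n^2$, justified by a ``Gaussian heuristic'' and a cumulant expansion of $\log\E^{GFF}[e^{-\lambda H_n}]$. But expanding this CGF in cumulants of $H_n$ gives, formally, $\E^{SG}[H_n]=\kappa_1-\kappa_2+\tfrac{1}{2}\kappa_3-\tfrac{1}{6}\kappa_4+\cdots$ and $\mathrm{Var}^{SG}(H_n)=\kappa_2-\kappa_3+\tfrac{1}{2}\kappa_4-\cdots$, and the fourth-moment estimate of Proposition~\ref{pr.moments} only yields $\kappa_4(H_n)=\caO(\sigma_n^4)$, which is $\gg\sigma_n^2=\kappa_2$ once $\sigma_n\to\infty$. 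So the higher cumulants are \emph{not} subdominant, the series is not controlled by its quadratic term, and your assertion that the boundedness of $v_n^{\infty,a}$ ``keeps the higher cumulants subdominant'' is not justified. Moreover, the magnitude of the SG mean shift is genuinely sensitive to the \emph{tails} of $H_n$ (a Gaussian lower tail gives shift $\asymp\sigma_n^2$, heavier tails can push the shift all the way to $-\|H_n\|_\infty\asymp L^{\beta n}\gg\sigma_n^2$), none of which is controlled by finitely many moments. The paper sidesteps this entirely: Lemma~\ref{l.liminf} compares, purely via the Radon--Nikodym density $e^{-H_n}/Z_n$, the SG mass of $\{H_n\geq u\sigma_n/2\}$ (at most $e^{-u\sigma_n/2}/Z_n$) against the SG mass of $\{H_n\leq u\sigma_n\}$ (at least $\delta e^{-u\sigma_n}/Z_n$), getting $\P^{SG}(H_n/\sigma_n\geq u/2)\leq \delta^{-1}e^{u\sigma_n/2}\to0$ without computing any SG moment. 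Since only a $\delta$-mass lower tail (not full concentration) is available under GFF, the conclusion is then assembled from a law-of-large-numbers statement under GFF (Lemma~\ref{l.LLN}, controlling cross-scale covariances) and a liminf-of-events statement under SG (Lemma~\ref{l.liminf}), along a sparse subsequence; your proposed direct two-sided Chebyshev and Borel--Cantelli cannot substitute for this without your unproven Step 2 bounds.
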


 \subsection{Construction of the singular event.}
 
 We start by computing the second and fourth moment of the effective Hamiltonian $H_n$ defined below using the effective potentials $v_n^{\infty,a}(\varphi)$ from Theorem \ref{th.conv} (and when $\varphi$ is a Hierarchical GFF). 
Let $H_n$ be the random variable which corresponds to the ``energy'' at depth $n$., i.e.
\begin{align*}\label{}
H_n:= \sum_{x\in \bar \Lambda_{n}} v_n^{\infty,a}(\varphi^n_x) 
\end{align*}

\begin{proposition}\label{pr.moments}
For any $a\in \R\setminus \{0\}$ (this is the constant used to initialise the RG flow in Theorem \ref{th.conv}), as $n\to \infty$, 
\begin{align}\label{e.varH}
\mathrm{Var}_{GFF}\Big[ H_n(\varphi)  \Big] &  \asymp  
\begin{cases}
n & \text{ if } \beta=1 \\
L^{(2\beta - 2)n} & \text{ if } \beta\in (1,2) 
\end{cases}
\end{align}

\begin{align}\label{e.fourth}
\Eb{H_n(\varphi)^4} &  \leq C \Eb{H_n(\varphi)^2}^2 \,.
\end{align}
\end{proposition}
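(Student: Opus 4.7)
The plan is to plug the decomposition of $v_n^{\infty,a}$ from Theorem \ref{th.conv} into $H_n$, and argue that the leading harmonic $k=1$ produces the claimed variance while all other terms (higher harmonics and the remainder $w_n^{\infty,a}$) are strictly subleading at every scale. For the computations I rely on two elementary facts about the UV hierarchical GFF: its covariance is $\mathrm{Cov}(\varphi^n_\hr(x),\varphi^n_\hr(y))=2(h(x,y)+1)\log L$, where $h(x,y)\le n$ denotes the depth of the deepest common $L$-adic ancestor of $x$ and $y$ in $[0,1)^2$, so in particular $\sigma_n^2:=\mathrm{Var}(\varphi^n_\hr(x))=2(n+1)\log L$; and the number of ordered pairs $(x,y)\in\bar\Lambda_n^2$ with $h(x,y)=h<n$ is $(1-L^{-2})L^{4n-2h}$.

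For the variance, decompose $\widetilde H_n:=H_n-\E H_n=\mu_{1,n}^{\infty,a}S_{1,n}+\sum_{k=2}^{p}\mu_{k,n}^{\infty,a}S_{k,n}+W_n$ where $S_{k,n}:=\sum_{x\in\bar\Lambda_n}(\cos(k\sqrt{\beta}\varphi^n_\hr(x))-L^{-k^2\beta(n+1)})$ and $W_n$ is the centered contribution of $w_n^{\infty,a}$. Product-to-sum combined with the Gaussian moment-generating function yields, for $h=h(x,y)<n$,
\[
\mathrm{Cov}\bigl(\cos(k\sqrt{\beta}\varphi^n_\hr(x)),\cos(k\sqrt{\beta}\varphi^n_\hr(y))\bigr)=\tfrac12 L^{-2k^2\beta(n-h)}+\tfrac12 L^{-2k^2\beta(n+h+2)}-L^{-2k^2\beta(n+1)}\asymp L^{-2k^2\beta(n-h)},
\]
while the diagonal variance is of order $1$. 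Using $\mu_{1,n}^{\infty,a}\sim\mu_n=aL^{-(2-\beta)n}$ (which follows from $|\mu_{1,n}^{\infty,a}-\mu_n|\le C\mu_n^{3/2}$) and summing this covariance over all pairs gives
\[
\mathrm{Var}\bigl(\mu_{1,n}^{\infty,a}S_{1,n}\bigr)\asymp a^2\Bigl[L^{2(\beta-1)n}+\sum_{h=0}^{n-1}L^{2(\beta-1)h}\Bigr],
\]
which is $\asymp n$ when $\beta=1$ and $\asymp L^{2(\beta-1)n}$ when $1<\beta<2$, proving~\eqref{e.varH}. For $k\ge 2$ the same computation produces $|\mu_{k,n}^{\infty,a}|^2L^{2n}\le CL^{[2-\frac{4k}{3}(2-\beta)]n}$, which is strictly below $L^{2(\beta-1)n}$ since $\frac{4k}{3}(2-\beta)>2(2-\beta)$ for $k\ge 2$. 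The term $W_n$ is controlled crudely by $L^{4n}\|w_n^{\infty,a}\|_\infty^2\le CL^{[4-\frac{4p}{3}(2-\beta)]n}$, which is negligible because $\frac{4p}{3}(2-\beta)>4$ by definition of $p$ in~\eqref{e.pbeta}. Cross-harmonic covariances carry an extra $L^{-\beta(k-k')^2(n+1)}$ factor and are therefore irrelevant.

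For~\eqref{e.fourth}, I would write $\E[\widetilde H_n^4]=3(\E[\widetilde H_n^2])^2+\kappa_4(\widetilde H_n)$ and prove $|\kappa_4(\widetilde H_n)|\le C(\E[\widetilde H_n^2])^2$. Expanding $\kappa_4(\widetilde H_n)=\sum_{x_1,\ldots,x_4\in\bar\Lambda_n}\kappa_4\bigl(\tilde v(\varphi^n_{x_1}),\ldots,\tilde v(\varphi^n_{x_4})\bigr)$ and writing $\cos(\sqrt{\beta}\varphi^n_{x_i})=\tfrac12(e^{i\sqrt{\beta}\varphi^n_{x_i}}+e^{-i\sqrt{\beta}\varphi^n_{x_i}})$ reduces each 4-point joint cumulant to a signed sum of Gaussian characteristic functions $e^{-\frac{\beta}{2}\sum_{i,j}\epsilon_i\epsilon_j c_{ij}}$ with $\epsilon\in\{\pm 1\}^4$ and $c_{ij}=\mathrm{Cov}(\varphi^n_{x_i},\varphi^n_{x_j})$. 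Subtracting the three Wick pair contractions precisely cancels the sign patterns that factorize as a product of two pair averages, and what survives in $\kappa_4$ carries an extra $L^{-2\beta(n-h)}$ factor at each non-contracted link. Classifying 4-tuples by the subtree they span in the $L$-adic tree (which gives only finitely many isomorphism classes), one checks type by type that the resulting 4-cumulant sum is bounded by $C(\mathrm{Var}(\widetilde H_n))^2$. The main obstacle is precisely this bookkeeping of 4-tuple types together with the Gaussian 4-point identities, but the hierarchical structure keeps the combinatorics finite and transparent. The higher-harmonic and $w_n^{\infty,a}$ contributions to $\kappa_4$ are controlled by the same subleading estimates used in the variance.
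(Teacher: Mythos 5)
Your variance computation is correct and matches the paper's in all essentials: both sum the pair covariance $\asymp L^{-2\beta(n-h)}$ over hierarchical levels $h$, multiply by the pair count $\asymp L^{4n-2h}$ and by $\mu_{1,n}^2\asymp L^{-2(2-\beta)n}$, and obtain $\asymp n$ (resp. $\asymp L^{(2\beta-2)n}$) for $\beta=1$ (resp. $1<\beta<2$). Your handling of higher harmonics and of $w_n^{\infty,a}$ uses the same bounds from Theorem \ref{th.conv} that the paper invokes, and your treatment of the diagonal term and of the cross-harmonic terms (with the $L^{-\beta(k-k')^2(n+1)}$ suppression) is slightly more explicit than what the paper writes but amounts to the same estimate. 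No issue there.

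For the fourth moment, however, your proof as written has a gap. The paper bounds $\E[H_n^4]$ directly by classifying quadruples $(x_1,\dots,x_4)$ in the $L$-adic tree into ``two-galaxy'' and ``one-galaxy'' configurations, and shows the former contribute $\asymp(\Var H_n)^2$ while the latter are strictly subleading; the dominant charge pattern $\{+,-,+,-\}$ and the ensuing Coulomb cancellation are the crux. Your plan instead splits $\E[\widetilde H_n^4]=3(\Var\widetilde H_n)^2+\kappa_4(\widetilde H_n)$ and proposes to bound $\kappa_4$ separately, which is a legitimate reorganization, but the one concrete mechanism you offer for controlling $\kappa_4$ — ``subtracting the three Wick pair contractions cancels the sign patterns that factorize\dots and what survives carries an extra $L^{-2\beta(n-h)}$ factor at each non-contracted link'' — is asserted, not proved, and in this form it is not obviously correct: the Wick subtraction in $\kappa_4$ is a cancellation at the level of moments of the $\cos$'s, not at the level of sign patterns in the exponential expansion, and the exponential correlators $\exp(-\tfrac\beta2\sum\epsilon_i\epsilon_j c_{ij})$ do not factor into ``links'' term by term. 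When one actually tries to make this precise, one is forced into exactly the classification you defer to at the end (``classify 4-tuples by the subtree they span and check type by type''), which is precisely the paper's galaxy argument and is where all the work is. So the fourth-moment part of your proposal is not a completed proof: the step you flag as ``bookkeeping'' is the proof, and the specific shortcut you suggest in its place is not justified. If you want to pursue the cumulant route, you would need to carry out the tree classification just as in the direct approach; there is no saving.
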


\ni
{\em Proof.}

We will drop the dependency in $a \neq 0$ which does not play any significant role.

\ni
\underline{First moment.}

To start with, we notice the first moment scales as follows
\begin{align*}\label{}
\Eb{H_n(\varphi)} & = \sum_{x\in \bar \Lambda_{n}} \Eb{v_n^\infty(\varphi^n_x)} \\
& = L^{2n} \Eb{ \sum_{k=1}^p \mu_{k,n}^{\infty}  \cos(k \sqrt{\beta} \varphi^n_x) + w_n^{\infty}(\varphi^n_x)} \\
& = 1 + L^{2n} \Eb{ \sum_{k=2}^p \mu_{k,n}^{\infty}  \cos(k \sqrt{\beta} \varphi^n_x) + w_n^{\infty}(\varphi^n_x)} \\
& = 1 + o(1). 
\end{align*}
Indeed, it follows readily from  Theorem \ref{th.conv} that the higher order terms only contribute $o(1)$ to this first moment for any $\beta\in [1,2)$. 

\smallskip

\ni
\underline{Second moment.}
Noticing that there are $L^{2n}$ ways to choose the ``first'' point $x_0$ and $\asymp L^{2k}$ ways to choose a point $x_k$ at hierarchical distance $k$ from $x_0$, we get 
\begin{align*}\label{}
\Eb{H_n(\varphi)^2}
& \asymp L^{2n}  \sum_{k=0}^{n} L^{2k}   \Eb{v_n^\infty(\varphi^n_{x_0})  v_n^\infty(\varphi^n_{x_k})}\,.
\end{align*}
Now it easy to check that the leading contribution to $\Eb{v_n^\infty(\varphi^n_{x_0})  v_n^\infty(\varphi^n_{x_k})}$ comes from 
\begin{align*}\label{}
(\mu_{1,n}^\infty)^2  \Eb{\cos(\sqrt{\beta} \varphi^n_{x_0}  )   \cos(\sqrt{\beta} \varphi^n_{x_k} )} \asymp L^{-(2-\beta) 2 n} L^{-2 \beta k}\,.
\end{align*}
This leads us to 
\begin{align*}\label{}
\Eb{H_n(\varphi)^2}
& \asymp L^{2n}  \sum_{k=0}^{n} L^{2k}   L^{-(2-\beta) 2 n} L^{-2 \beta k} \\
& \asymp  L^{(2\beta -2) n}  \sum_{k=0}^n L^{(2-2\beta) k}
\end{align*}
which ends the proof of~\eqref{e.varH}.

\begin{remark}\label{}
It can be checked that the lower order terms in $v_n^\infty(\varphi)$ such as $\mu_{2,n}^\infty \cos(2 \sqrt{\beta} \varphi)$ may also contribute with a diverging term to $\Var{H_n(\varphi)}$ (when $\beta\in(1,2)$ is large enough) but the corresponding blow up is negligible w.r.t $\mu_{1,n}$ terms. This can be seen directly by using  $|\Eb{\cos(k \sqrt{\beta} \varphi^n_x)   \cos(k' \sqrt{\beta} \varphi^n_y) }| \lesssim  |\Eb{\cos(\sqrt{\beta} \varphi^n_x)   \cos(\sqrt{\beta} \varphi^n_y) }|$ when $k,k' \geq 1$.  
One may check for example that the contribution to the variance coming from the square of $\sum_x \mu_{2,n} \cos(2 \sqrt{\beta} \varphi^n_x)$ is controlled by $L^{[2 -(2-\beta)\tfrac 8 3] n}$ which is $o(L^{(2\beta-2)n})$ as long as $\beta<2$. 
\end{remark}

\smallskip

\ni
\underline{Fourth moment.}
We need to estimate 
\begin{align*}\label{}
\sum_{x_1,x_2,x_3,x_4} \mu_{1,n}^4  \Eb{\prod_{i=1}^4 \cos(\sqrt{\beta} \varphi^n_{x_i}) }
\end{align*}
(As for the second moment, the higher order terms in the expansions of $v_n^\infty(\varphi)$ give a negligible contribution to the fourth moment). 
The hierarchical geometry makes it easier than on $\T^2$ to group these 4 points depending on their respective distances. We will consider the following two cases (which do overlap slightly, but this is fine as we are only looking for up to constants upper bounds).

\ni
\textbf{Case 1.} {\em Two galaxies.} Two stars $x$ and $y$ are far appart, at hierarchical distance $k$ from each other.  And they each have one satellite $x'$ and $y'$ at respective hierarchical distances $k_1$ and $k_2$ from $x$ and $y$. (We thus have $k_1\wedge k_2 \leq k$).   There are $L^{2n}$ choices for the first star $x$,  of order $L^{2k}$ choices for the $y$ and finally $L^{2 k_1}$ and $L^{2 k_2}$ choices for the satellites $x'$ and $y'$. 

Now for each such configuration with two galaxies, we want to upper bound
\begin{align*}\label{}
\mu_{1,n}^4 \Eb{\cos(\sqrt{\beta} \varphi^n_{x})\cos(\sqrt{\beta} \varphi^n_{x'}) \cos(\sqrt{\beta} \varphi^n_{y}) \cos(\sqrt{\beta} \varphi^n_{y'})}
\end{align*}
which boils down to 
\begin{align*}\label{}
\mu_{1,n}^4 \Eb{e^{ i \sqrt{\beta}   \big( \sigma_x \varphi^n_{x} + \sigma_{x'} \varphi^n_{x'} +  \sigma_y  \varphi^n_{y} + \sigma_{y'} \varphi^n_{y'} \big)}}
\end{align*}
for any choice of signs $(\sigma_x,\sigma_{x'},\sigma_y,  \sigma_{y'}) \in \{ \pm 1\}^4$.  It is easy to check that the galaxies with alternating signs $\{+,-,+,-\}$ or $\{-,+,-,+\}$ give the main contribution which is of order 
\begin{align*}\label{}
\mu_{1,n}^4 * L^{- 4 \tfrac 1 2  * 2 \log L * n \beta} * \Eb{e^{\frac 1 2   * 2 * (2 \log L) * \beta * (2n - k_1-k_2)}} 
\end{align*}
The contribution coming from these {\em 2 galaxies} configurations is thus bounded by 
\begin{align*}\label{}
L^{2n} \sum_{k_1, k_2 \leq k }  L^{2k} L^{2(k_1 + k_2)}   L^{-4n(2-\beta)}   L^{-4n \beta} L^{4n\beta} L^{-2 \beta (k_1+k_2)}\,.
\end{align*}
Let us first analyse this bound when $\beta \in (1,2)$. It is upper bounded by 
\begin{align*}\label{}
L^{2n}  L^{-4n (2-\beta)} \sum_{k=1}^n  L^{2k}  = L^{(4\beta - 4) n} \asymp \Eb{H_n(\varphi)^2}\,,
\end{align*}
as desired. 
If instead $\beta=1$,  the above 2 galaxies contribution is upper bounded by 
\begin{align*}\label{}
L^{2n} \sum_{k_1, k_2 \leq k }  L^{2k} L^{2(k_1 + k_2)}   L^{-4n}  L^{-2 (k_1+k_2)} \asymp L^{-2n} \sum_{k=1}^n k^2  L^{2k} \asymp n^2\,, 
\end{align*}
as desired as well.

It may also happen that $\{x_1,x_2,x_3,x_4\}$ does not form two galaxies. This is our second case below which will turn out to be negligible. 

\smallskip
\ni
\textbf{Case 2.} {\em One galaxy.}  One star $x_1$ with 3 satellites $x_2, x_3, x_4$  at  respective hierarchical distance $k_1 \leq k_2 \leq k_3$ from $x_1$ (the roles of $x_1$ and $x_2$ may be interchanged here).  
There are $L^{2n}$ choices for the ``star'' $x_1$,  and then of order $L^{2k_1}$ choices for the $x_2$,  $L^{2 k_2}$ choices for $x_3$ and $L^{2 k_3}$ choices for $x_4$. With the Coulomb assignment $\{+,-,+,-\}$ to the points $x_1,x_2,x_3,x_4$ we find a contribution of order 
\begin{align*}\label{}
\mu_{1,n}^4 * L^{- 4 \tfrac 1 2  * 2 \log L * n \beta} * \Eb{e^{\frac 1 2   * 2 * (2 \log L) * \beta * [(n - k_1)  + (n-k_3)]}} 
\end{align*}
which yields to a control of the one-galaxy contribution of order 
\begin{align*}\label{}
L^{2n} \sum_{1 \leq k_1 \leq k_2 \leq k_3  }  L^{2(k_1+k_2+k_3)}  L^{-4n(2-\beta)}   L^{-4n \beta} L^{4n\beta} L^{-2 \beta (k_1+k_3)}\,.
\end{align*}
If $\beta\in (1,2)$, we find 
\begin{align*}\label{}
& L^{2n} \sum_{1 \leq k_1 \leq k_2 \leq k_3  }  L^{2(k_1+k_2+k_3)}  L^{-4n(2-\beta)}   L^{-2 \beta (k_1+k_3)} \\
& \leq  O(1) L^{2n} \sum_{1 \leq k_1 \leq k_2  }  L^{2(k_1+2 * k_2)}  L^{-4n(2-\beta)}   L^{-2 \beta (k_1+k_2)} \\
& \leq O(1) L^{2n} \sum_{k_1=1}^n L^{2 k_1 - 2\beta k_1}  L^{-4n(2-\beta)} L^{ 4n - 2\beta n} \\
& \leq O(1) L^{2n -8n +4\beta n +4n  -2\beta n  } = O(1) L^{(2\beta  - 2)n}  \ll L^{(4\beta -4) n}\,.
\end{align*}
If $\beta=1$, we find in the same way an upper bound of order
\begin{align*}\label{}
L^{-2n} \sum_{k_2=1}^n k_2 (n-k_2) L^{2 k_2}  \leq  O(1) n \ll n^2 \,,
\end{align*}
which ends the proof of the fourth moment estimate~\eqref{e.fourth}. 
\qed

%

Proposition \ref{pr.moments} implies (using also that $\Eb{H_n(\varphi)}\sim 1$) that the sequence of random variables 
\begin{align*}\label{}
\left( \frac{H_n(\varphi) }
{\sqrt{\mathrm{Var}_{GFF}\big[ H_n(\varphi) \big]}} \right)_{n\geq 1}
\end{align*}
is tight as $n\to \infty$ and more importantly (using the fourth moment estimate) that it has non-degenerate subsequential limits in law. I.e. there exists a subsequence $\{n_k\}_k$ and a non-degenerate law $\nu$ on $\R$ so that 
\begin{align*}\label{}
\frac{H_{n_k}(\varphi)}
{\sqrt{\mathrm{Var}_{GFF}\big[ H_{n_k}(\varphi) \big]}} \overset{(d)}\longrightarrow \nu
\end{align*}
as $k\to \infty$. (N.B. Indeed, we obtained a control on the fourth moment of $H_n$ in order to prevent the random variable $\frac{H_n(\varphi)} {\sqrt{\mathrm{Var}_{GFF}\big[ H_n(\varphi) \big]}}$ to converge in law to a Dirac point mass at $0$). 
 
 
\smallskip
This implies the existence of $u< 0 < v \in \R$  and $\delta>0$ which are such that 
\begin{align}\label{e.KeyE}
\liminf_{k\to \infty} \Pb{ \frac{H_{n_k}(\varphi) }
{\sqrt{\mathrm{Var}_{GFF}\big[ H_{n_k}(\varphi) \big]}} < u} \wedge \liminf_{k\to \infty} \Pb{\frac{H_{n_k}(\varphi) }
{\sqrt{\mathrm{Var}_{GFF}\big[ H_{n_k}(\varphi) \big]}} > v}  \geq \delta 
\end{align}

The singularity stated in Theorem \ref{th.mainSG} follows readily from the combination of the following two lemmas whose respective outcomes are incompatible under a single probability measure.
 
\begin{lemma}\label{l.LLN}
For sparse enough subsequence $\{m_\ell\}_{\ell \geq 1}$ of $\{n_k\}_k$, one has under the hierarchical GFF measure 
\begin{align*}\label{}
\frac 1 M \sum_{\ell=1}^M  \frac{H_{m_\ell}(\varphi) }
{\sqrt{\mathrm{Var}_{GFF}\big[ H_{m_\ell}(\varphi) \big]}}   \overset{p.s.}\longrightarrow 0
\end{align*}
\end{lemma}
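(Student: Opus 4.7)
The plan is a second-moment plus Borel--Cantelli argument along a sufficiently sparse subsubsequence of $\{n_k\}$. Set $X_n := H_n(\varphi)/\sqrt{\mathrm{Var}_{GFF}[H_n(\varphi)]}$. From Proposition~\ref{pr.moments} we have $\E[X_n^2]\to 1$, $\sup_n\E[X_n^4]\leq C$, and $\E[X_n]=O(\mathrm{Var}(H_n)^{-1/2})\to 0$; the latter is moreover summable along any geometrically growing subsequence (when $\beta=1$) or along any subsequence at all (when $\beta\in(1,2)$, by~\eqref{e.varH}).

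The key input is an estimate of the cross-covariance $\E[H_n H_{n'}]$ for $n>n'$. Writing $\varphi^n(x)=\varphi^{n'}(\tilde y)+W$, where $\tilde y$ is the level-$n'$ ancestor of $x$ and $W\sim\calN(0,2(n-n')\log L)$ is independent of $\calF_{n'}$, a direct Gaussian computation gives
\[
\E\bigl[\cos(k\sqrt\beta\,\varphi^n(x))\mid\calF_{n'}\bigr]=L^{-k^2\beta(n-n')}\cos(k\sqrt\beta\,\varphi^{n'}(\tilde y)).
\]
Summing the $k=1$ term over the $L^{2(n-n')}$ descendants of each $y\in\bar\Lambda_{n'}$ and using $\mu_{1,n}^\infty\asymp L^{-n(2-\beta)}$ produces the factor $\mu_{1,n}^\infty L^{(2-\beta)(n-n')}\asymp\mu_{1,n'}^\infty$, so the $c_1$-contribution to $\E[H_n\mid\calF_{n'}]$ reproduces the leading part of $H_{n'}$ (this is the probabilistic shadow of the RG identity $\calR v_n^\infty=v_{n-1}^\infty$ from Theorem~\ref{th.conv}(i)). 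Each $c_k$-contribution with $k\geq 2$ carries an extra decay $L^{-(k^2-1)\beta(n-n')}$, which together with $|\mu_{k,n}^\infty|\leq C\mu_n^{2k/3}$, $\|w_n^\infty\|_\infty\leq C\mu_n^{2p/3}$ from Theorem~\ref{th.conv}(ii), and the same diagonal-versus-off-diagonal counting as in the proof of~\eqref{e.varH}, makes them negligible compared with $\mathrm{Var}(H_{n'})$. Consequently $|\E[H_n H_{n'}]|=|\E[H_{n'}\,\E[H_n\mid\calF_{n'}]]|\leq C\,\mathrm{Var}(H_{n'})$, hence
\[
|\E[X_n X_{n'}]|\leq C\sqrt{\mathrm{Var}(H_{n'})/\mathrm{Var}(H_n)}=\begin{cases} C\,L^{-(\beta-1)(n-n')} & \beta\in(1,2),\\ C\,\sqrt{n'/n} & \beta=1.\end{cases}
\]

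Now extract $\{m_\ell\}$ from $\{n_k\}$ so sparsely that $|\E[X_{m_\ell}X_{m_{\ell'}}]|\leq 2^{-|\ell-\ell'|}$ for all $\ell\neq\ell'$ (for $\beta\in(1,2)$ it suffices to impose $m_{\ell+1}-m_\ell\geq C_0$ with $C_0$ a large enough constant; for $\beta=1$, the choice $m_{\ell+1}/m_\ell\geq 4^\ell$ works). Setting $\bar S_M := M^{-1}\sum_{\ell=1}^M X_{m_\ell}$ we obtain
\[
\E[\bar S_M^2] \leq \frac{1}{M^2}\sum_{\ell,\ell'=1}^M|\E[X_{m_\ell}X_{m_{\ell'}}]| + \Bigl(\frac{1}{M}\sum_{\ell=1}^M |\E[X_{m_\ell}]|\Bigr)^2 \leq \frac{C}{M}.
\]
Chebyshev along $M_j=j^2$ together with Borel--Cantelli yields $\bar S_{M_j}\to 0$ almost surely. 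To fill in the gaps, decompose $\bar S_M = \frac{M_j}{M}\bar S_{M_j} + M^{-1}\sum_{\ell=M_j+1}^M X_{m_\ell}$ for $M\in[M_j,M_{j+1}]$; by Cauchy--Schwarz
\[
\E\Bigl[\max_{M_j<M\leq M_{j+1}}\Bigl(\sum_{\ell=M_j+1}^M X_{m_\ell}\Bigr)^2\Bigr]\leq (M_{j+1}-M_j)^2\sup_\ell\E[X_{m_\ell}^2]=O(j^2),
\]
so the second piece is $O(1/j)$ in $L^2$ and a further Borel--Cantelli argument along $j\geq 1$ shows it tends to zero almost surely as well. The main technical obstacle is the computation of $\E[H_n\mid\calF_{n'}]$ together with the verification that the higher harmonics $c_k$, $k\geq 2$, and the remainder $w_n^\infty$ in the decomposition of $v_n^\infty$ provided by Theorem~\ref{th.conv} remain subdominant to the leading $\mu_{1,\cdot}^\infty\cos(\sqrt\beta\cdot)$ contribution uniformly throughout the range $\beta\in[1,\beta_{BKT})$; this is precisely where the bookkeeping on $\mu_{k,n}^\infty$ and $\|w_n^\infty\|_\infty$ from Theorem~\ref{th.conv} is needed.
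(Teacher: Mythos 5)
Your proof is correct and follows the same strategy as the paper's: show that the normalized energies $X_n$ have a cross-covariance that decays as $m\to\infty$ for fixed $n$, then apply a weak-$L^2$ law of large numbers together with a sparse sub-subsequence and Borel--Cantelli to upgrade to almost-sure convergence. The one genuine difference is in how the cross-covariance is computed: the paper bounds the double sum $\sum_{x,y}\mu_{1,n}\mu_{1,m}\E[\cos(\sqrt\beta\varphi^n_x)\cos(\sqrt\beta\varphi^m_y)]$ directly with a crude bound that is already sufficient for a fixed $n$ as $m\to\infty$, whereas you organize the same computation via $\E[H_n\mid\calF_{n'}]$, showing that the conditional expectation reproduces (to leading order) $H_{n'}$ with exactly the right coupling constant $\mu_{1,n'}^\infty$, which is an attractive reformulation that makes the RG identity $\calR v_n^\infty = v_{n-1}^\infty$ probabilistically visible; you are also considerably more explicit than the paper about the sparsity condition needed and the Chebyshev/gap-filling step along $M_j=j^2$, which the paper compresses into ``up to taking yet another subsequence.''
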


\begin{lemma}\label{l.liminf}
For any $\beta\in [1,2)$ and for sparse enough subsequence $\{m_\ell\}_{\ell \geq 1}$ of $\{n_k\}_k$, one has under the Hierarchical Sine-Gordon measure 
\begin{align*}\label{}
\FK{\beta,a}{SG} 
{\liminf_{\ell \to \infty } \{   \frac{H_{m_\ell}(\varphi) }
{\sqrt{\mathrm{Var}_{GFF}\big[ H_{m_\ell}(\varphi) \big]}}   \leq  \frac u  2 \}} =1 
\end{align*}
(where $a>0$ is the parameter used to initialise the SG-RG flow and where $u<0$ is the real number obtained in~\eqref{e.KeyE}). 
\end{lemma}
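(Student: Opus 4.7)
The plan is to exploit the fact that the SG measure restricted to $\calF_n$ has explicit Radon--Nikodym density $e^{-H_n(\varphi^n_\hr)}/Z_n$ with $Z_n:=\E^{GFF}[e^{-H_n}]$ (cf.\ Definition~\ref{d.SG}), and to compare the two relevant tails through this density. Write $\sigma_n:=\sqrt{\mathrm{Var}_{GFF}[H_n]}$ and $X_n:=H_n/\sigma_n$, so that the event in the lemma is $\{X_{m_\ell}\leq u/2\}$. The two elementary pointwise bounds I will use are: on $\{X_n>u/2\}$ one has $e^{-H_n}<e^{-u\sigma_n/2}$, while on $\{X_n\leq u\}$ one has $e^{-H_n}\geq e^{-u\sigma_n}$. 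These produce, respectively,
\begin{equation*}
\P^{SG}\bigl[X_n>\tfrac{u}{2}\bigr]\leq \frac{e^{-u\sigma_n/2}}{Z_n} \qquad \text{and} \qquad Z_n\geq e^{-u\sigma_n}\,\P^{GFF}\bigl[X_n\leq u\bigr].
\end{equation*}

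Applying these along the subsequence $\{n_k\}$ furnished by~\eqref{e.KeyE}, one has $\P^{GFF}[X_{n_k}\leq u]\geq \delta/2$ for $k$ large enough, hence $Z_{n_k}\geq (\delta/2)\,e^{-u\sigma_{n_k}}$, and combining the two displays yields
\begin{equation*}
\P^{SG}\bigl[X_{n_k}>\tfrac{u}{2}\bigr]\leq \frac{2}{\delta}\,e^{u\sigma_{n_k}/2}.
\end{equation*}
Since $u<0$ and $\sigma_{n_k}\to \infty$ by~\eqref{e.varH} (with rate $\sqrt{n}$ when $\beta=1$, and $L^{(\beta-1)n}$ when $\beta\in(1,2)$), this upper bound decays to $0$ exponentially in $\sigma_{n_k}$.

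To convert this into the $\liminf$ statement, pick a sub-subsequence $\{m_\ell\}\subseteq\{n_k\}$ for which $\sum_\ell e^{u\sigma_{m_\ell}/2}<\infty$; no sparseness is required when $\beta\in(1,2)$ since the bound is already doubly exponentially small, and any mild lacunarity (e.g.\ $m_\ell\geq \ell^3$) suffices when $\beta=1$. The first Borel--Cantelli lemma under $\P^{SG}$ then ensures that $\{X_{m_\ell}>u/2\}$ occurs only finitely often $\P^{SG}$-a.s., so $\liminf_\ell X_{m_\ell}\leq u/2$ almost surely. The only step with any nontrivial content is identifying $e^{-H_n}/Z_n$ as the genuine Radon--Nikodym derivative across depths, which is built into Definition~\ref{d.SG} via the RG consistency $\calR[v_n^{\infty,a}]=v_{n-1}^{\infty,a}$ of Theorem~\ref{th.conv}(i); everything else is a soft combination of the two pointwise bounds on $e^{-H_n}$ with the lower bound on $\P^{GFF}[X_{n_k}\leq u]$ from~\eqref{e.KeyE}.
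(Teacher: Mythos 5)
Your proof is correct and follows essentially the same route as the paper: both use the exact Radon--Nikodym density $e^{-H_n}/Z_n$ on $\calF_n$, derive the upper bound $\P^{SG}[X_n > u/2]\leq Z_n^{-1}e^{-u\sigma_n/2}$ and the lower bound $Z_n\geq e^{-u\sigma_n}\P^{GFF}[X_n\leq u]$ from the same two pointwise inequalities, feed in~\eqref{e.KeyE}, and close with Borel--Cantelli along a sparse subsequence. The only cosmetic difference is that you make the quantitative decay $\P^{SG}[X_{n_k}>u/2]\leq(2/\delta)\,e^{u\sigma_{n_k}/2}$ explicit before invoking Borel--Cantelli, whereas the paper leaves the comparison of its two displayed bounds implicit; this is a presentation choice, not a different argument.
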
 

\smallskip
{\em Proof of Lemma \ref{l.LLN}.}


By the standard weak-$L^2$ law of large number and up to taking yet another subsequence (to upgrade a convergence in probability to an almost sure convergence), it is enough to prove that for any fixed $n$, 

\begin{align*}\label{}
\limsup_{m \to \infty} \mathrm{Cov}_{GFF}
\left[
\frac{H_{n}(\varphi) }
{\sqrt{\mathrm{Var}_{GFF}\big[ H_{n}(\varphi) \big]}} 
 \,,\,
 \frac{H_{m}(\varphi) }
{\sqrt{\mathrm{Var}_{GFF}\big[ H_{m}(\varphi) \big]}} 
\right] =0\,.
\end{align*}
As we argued in the above proof, it is sufficient to consider the behaviour of the leading order effective potentials $v_n^\infty(\varphi)$ and $v_m^\infty(\varphi)$ at depth $n$ and $m$. 
 Recall \begin{align*}\label{}
H_n(\varphi):= \sum_{x\in  \bar \Lambda_{n}} v_n^{\infty}(\varphi^n_x)\,.
\end{align*}

We need to estimate 
\begin{align*}\label{}
\sum_{x\in \bar \Lambda_{n},y\in \bar \Lambda_{m}} \mu_{1,n} \mu_{1,m} 
 \Eb{\cos(\sqrt{\beta} \varphi^{n}_{x}  )   \cos(\sqrt{\beta} \varphi^{m}_{y} )}\,.
\end{align*}
The largest possible terms are given when $y$ is a descendent in $\bar \Lambda_{m}$ of $x\in \bar \Lambda_{n}$. In those cases, we have 
\begin{align*}\label{}
 \Eb{\cos(\sqrt{\beta} \varphi^{n}_{x}  )   \cos(\sqrt{\beta} \varphi^{m}_{y} )} \leq L^{-\beta(m-n)}
\end{align*}
so that with a crude bound, 
\begin{align*}\label{}
& \sum_{x\in \bar \Lambda_{n},y\in \bar \Lambda_{m}} \mu_{1,n} \mu_{1,m} 
 \Eb{\cos(\sqrt{\beta} \varphi^{n}_{x}  )   \cos(\sqrt{\beta} \varphi^{m}_{y} )} \\
 & \leq L^{2(n+m)} L^{-(2-\beta)n} L^{-(2-\beta)m} L^{-\beta(m-n)} \\
 & = L^{2\beta n }  \\
 & \ll \sqrt{\mathrm{Var}_{GFF}\big[ H_{n}(\varphi) \big]} \sqrt{\mathrm{Var}_{GFF}\big[ H_{m}(\varphi) \big]}  \\
 & \asymp  \sqrt{nm} \,\, \text{ if $\beta=1$  and }\asymp \sqrt{L^{(\beta-1)(n+m)}}  \text{ if $\beta\in(1,2)$  }
\end{align*}

\qed

\smallskip
\ni
{\em Proof of Lemma \ref{l.liminf}.}
This follows from Borel-Cantelli. 
Indeed, recall from~\eqref{e.RN} that for each finite depth $n$ 
the Radon-Nikodym derivative of the hierarchical field $\varphi$ for the $a$-SG measure w.r.t to the GFF measure is
\begin{align*}\label{}
\propto \exp( - H_n(\varphi)) = \exp(- \sum_{x\in \bar \Lambda_{n}} v_n^{\infty,a}(\varphi^n_x))\,.
\end{align*}
In particular, for each $n$ which belongs to the subsequence $\{n_k\}$ used in the estimate~\eqref{e.KeyE}, we find 
\begin{align}\label{e.11}
\FK{\beta,a}{SG}{\frac 
{H_n(\varphi)}
{\sqrt{\mathrm{Var}_{GFF}\big[ H_{n}(\varphi) \big]}}
\geq \frac  u 2} 
& = \frac 1 {Z_n}  \int 1_{
\frac 
{H_n(\varphi)}
{\sqrt{\mathrm{Var}_{GFF}\big[ H_{n}(\varphi) \big]}}
\geq \frac  u 2
}
 e^{-H_n(\varphi)} \mu_{GFF}(d\varphi)   \nn \\
 & \leq \frac 1 {Z_n} 
 e^{- \frac u 2 \sqrt{\mathrm{Var}_{GFF}\big[ H_{n}(\varphi) \big]}}
\end{align}
while 
\begin{align}\label{e.22}
\FK{\beta,a}{SG}{\frac 
{H_n(\varphi)}
{\sqrt{\mathrm{Var}_{GFF}\big[ H_{n}(\varphi) \big]}}
\leq   u } 
& =
 \frac 1 {Z_n}  \int 1_{
\frac 
{H_n(\varphi)}
{\sqrt{\mathrm{Var}_{GFF}\big[ H_{n}(\varphi) \big]}}
\leq   u 
}
 e^{-H_n(\varphi)} \mu_{GFF}(d\varphi)  \nn  \\
 & \geq \frac 1 {Z_n} 
 e^{- u  \sqrt{\mathrm{Var}_{GFF}\big[ H_{n}(\varphi) \big]}}  
 \FK{}{GFF}
 {
 \frac 
{H_n(\varphi)}
{\sqrt{\mathrm{Var}_{GFF}\big[ H_{n}(\varphi) \big]}}
\leq  u 
 } \nn 
 \\ 
 &  \geq \frac{\delta} {Z_n} e^{
 - u  \sqrt{\mathrm{Var}_{GFF}\big[ H_{n}(\varphi) \big]}} \,,
\end{align}
where we used the fact $n \in \{n_k\}_{k\geq 1}$ so that estimate~\eqref{e.KeyE} is satisfied (N.B. this is the only place where we use our control on the fourth moment). 

Comparing~\eqref{e.11} with~\eqref{e.22}, we see that under the Sine-Gordon measure it is more and more likely as $n \in \{n_k\} \to \infty$ that $H_n(\varphi)$ is very negative. 
By taking a suitable subsequence so that Borel-Cantelli applies, it concludes the proof of Lemma \ref{l.liminf}. 
\qed

\section{RG iteration for  hierarchical $\Phi^4_3$}\label{s.Phi}

\subsection{RG Map.}\label{ss.RGmap}

We work in the dimensionless variables and with a slightly different setup as in the $2d$ hierachical Sine-Gordon model. 
Recall Definition~\eqref{e.phiN} of the $d=3$ hierarchical IR field $\phi^N$. 
The corresponding UV hierarchical field $\varphi^N$ is given in~\eqref{e.varphiN}. 
In short, for any $L\geq 2$, 
\begin{align*}
 \Lambda_N:=\{x\in\Z^d: 0 \leq x_i  < L^N, i=1,\dots, d\}  \text{ and } \bar \Lambda_N : = \frac 1 {L^N} \Lambda_N \subset [0,1]^d
\end{align*}
\begin{equation*}
\phi^N(x)=\sum_{n=0}^{N}L^{-\frac{d-2}{2}n}z^{(N-n)}_{[\frac{x}{L^n}]} \text{ and }
\varphi^N(\bar x)  = L^{\frac {d-2} 2 N} \phi^N([ L^N \bar x ])
 \end {equation*}

As in Section \ref{s.SG}, the RG flow analysis will be done in the IR setting.
Given a bounded positive function $g$ on $\R$ define the RG map $\caR$ by
\begin{equation}
(\caR g)(\phi)=\E\prod_{x\in \Lambda_1}g(L^{-\hf}\phi+z_x)
\nonumber
 \end {equation}
where $\E$ stands for  expectation in the Gaussian variables $\{z_x\}_{x\in\Lambda_1}$ with covariance matrix
 \begin{align*}
\E z_xz_y=\Gamma_{xy}=\delta_{xy}-L^{-3}.
\end{align*}

 We want to study iteration of $\caR$ starting with the function
 $$
 g(\phi)=e^{p-r\phi^2-\lambda\phi^4}
 $$
 where we suppose the parameters $p, r, \lambda$ are small. Let us first briefly discuss what to expect from $\caR( g)$. We have
 \begin{equation}
(\caR g)(\phi)=e^{L^3p-L^2r{\phi}^2-L\lambda{\phi}^4}\E\, e^{-\sigma(\phi,z)}
\label{rgmap1}
 \end {equation}
 where
 \begin{equation}
\sigma(\phi,z)=\sum_{x\in \Lambda_1}(
6\lambda L^{-1}\phi^2z_x^2+4\lambda  L^{-\hf}\phi
z_x^3+\lambda z_x^4+rz_x^2)
\nonumber
 \end {equation}
and we used $\sum z_x=0$. Now suppose first $|\phi|<\rho$ {for some choice of $\rho$ so} that $\lambda\rho^2$ is small. {This corresponds to the so-called ``small-field'' region}. Then we expect to be able to compute the integral in \eqref{rgmap1} perturbatively: 
 \begin{equation}
 \E e^{- \sigma(\phi,z)}=\sum_{l=0}^k {\frac{ (-1)^l }{l!}}\E \,\sigma(\phi,z)^l+R_k(\phi)=1+P_k(\phi)+R_k(\phi).
\label{rgmap2}
 \end {equation}
 The first term $P_k(\phi)$ is an even
 polynomial in $\phi$ of degree $2k$. All together this expression is close to $1$ and we arrive at
  \begin{equation}
  (\caR g)(\phi)
=e^{\sum_{l={0}}^ka_l\phi^{2l}+r_k(\phi)}\label{rgmap3}
 \end {equation}
where 
  \begin{equation}
a_0=L^3p
,\ \ \
 a_1=L^2r+\caO(\lambda),\ \ \ a_2=L^{4-d}\lambda+\caO(\lambda^2),\ \ \ a_l=\caO(\lambda^l),\ \ l>2
\label{rgmap3}
 \end {equation}
 Furthermore, the remainder $r_k(\phi)= \caO((\lambda\rho^2)^{k+1})$.
 
 Next, to  bound $\R(g)$ in the "large field region"  $|\phi|\geq \rho$ first complete the 2nd and 3rd terms in \eqref{rgmap1} to a square:
 \begin{align}\label{}
 \sigma(\phi,z) & = \sum_{x\in \Lambda_1}
(2 \lambda L^{-1}\phi^2z_x^2 + \lambda 
 (2 L^{-\hf}  \phi z_x + z_x^2)^2 +rz_x^2 ) \nn \\
& \geq\sum_{x\in \Lambda_1}
(2\lambda L^{-1}\phi^2z_x^2+rz_x^2).
\label{magic}
 \end {align}
Hence
 \begin{equation}
 (\caR g)(\phi)\leq e^{L^3p
-L^2r{\phi}^2-L\lambda{\phi}^4}\E \, e^{-\sum_{x\in \Lambda_1}rz_x^2}\leq e^{-\frac{1}{2}L\lambda{\phi}^4}
\label{1stlarge}.
 \end {equation}
provided $p,r=\caO(\lambda)$ and $\rho$ is large enough {(w.r.t $L$, more precisely we need $\rho \geq CL^\hf$ since as we shall see below that  $r$ will be negative).}
This motivates the following result:

 \begin{proposition}\label{rgmap}
There exist $\bar\lambda,\delta >0$ s.t the following holds for $\lambda\leq\bar\lambda$. Let for $|\phi|>\rho:=
 \lambda^{-\frac{1}{4}(1+\delta)}$
\begin{equation}
|g(\phi)|\leq e^{-\hf\lambda\phi^4}
\label{ass1}
 \end {equation}
 and for $|\phi|\leq\rho$ assume
 \begin{equation}
g(\phi)=e^{-v(\phi)}
\label{ass1a}
 \end {equation}
 with
 \begin{equation}
\label{ass22}
v(\phi)=\sum_{i=0}^{6}u_i\phi^{2i}+w(\phi)
 \end {equation}
 where
  \begin{equation}
\label{ass322}
u_2=\lambda, \ \ u_1=\caO(\lambda),\ \, u_i=\caO(\lambda^3)
 \ \ i>2.
 \end {equation}
 Furthermore suppose
  \begin{equation}
\label{ass32}
 \sup_{|\phi|\leq\rho}|w(\phi)|\leq \lambda^{3+\delta}
 \end {equation}
 Then 
 $g':=\caR(g)$ satisfies \eqref{ass1}-\eqref{ass32} with $\lambda'=u_2'$ and 
\begin{align}\label{ass3}
u_i'&=L^{3-i}u_i+p_i({\mathbf{u}})
\end {align}
 where
 \begin{align}\label{p0ite}
p_0({\mathbf{u}})=P_0(u_1,u_2)+a_0u_3+\caO(\lambda^4)
\end{align}
and $P_0$ is a polynomial of degree $3$ and   
\begin{align}
p_1&=a_1u_2+
a_2u_1u_2+a_3u_2^2+\caO(\lambda^3)\label{rite}\\
p_2&=a_4u_2^2+\caO(\lambda^3)\label{lite}\\
p_3&=a_5u_2^3 +\caO(\lambda^4),\label{p3ite}\\
p_i&=\caO(\lambda^i),\ \ i>3.\label{p4ite}
 \end {align}
 with $a_4<0$  and
 \begin{equation}
\label{ass4}
\sup_{|\phi|\leq\rho'}|w'(\phi)|\leq \lambda'^{3+\delta}
 \end {equation}
 where $\rho':=
 \lambda'^{-\frac{1}{4}(1+\delta)}$. 
 Here and in what follows $\caO(\lambda^k)$ means bounded by $C(L)\lambda^k$.
 \end{proposition}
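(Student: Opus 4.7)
The plan is to analyse the two regions $|\phi|>\rho'$ and $|\phi|\leq\rho'$ separately, exactly as in the informal derivation \eqref{rgmap1}-\eqref{1stlarge}, now carried out with the full polynomial potential \eqref{ass22} rather than the pure $e^{p-r\phi^2-\lambda\phi^4}$ ansatz.

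\textbf{Large field region.} I first upgrade the hypotheses \eqref{ass1}-\eqref{ass32} to the pointwise bound $g(\phi)\leq e^{-\hf\lambda\phi^4+O(1)}$ valid on all of $\R$: in the small-field sub-region this is because $|u_1|\phi^2+\sum_{i\geq 2}|u_i|\phi^{2i}+|w(\phi)|$ is dominated by $\tfrac14\lambda\phi^4+O(1)$ thanks to \eqref{ass322}-\eqref{ass32} and the choice $\rho=\lambda^{-(1+\delta)/4}$. Inserting this bound inside the definition of $\caR g$ and using the algebraic identity
\[
\sum_{x\in\Lambda_1}(L^{-\hf}\phi+z_x)^4 \;=\; L\phi^4 + \sum_{x\in\Lambda_1}\bigl((2L^{-\hf}\phi\, z_x+z_x^2)^2+2L^{-1}\phi^2 z_x^2\bigr),
\]
which is valid thanks to $\sum_x z_x=0$ and has each summand $\geq 0$, I obtain $(\caR g)(\phi)\leq e^{-\hf L\lambda\phi^4}$ as required, with $\lambda'=L\lambda(1+O(\lambda))$.

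\textbf{Small field region.} For $|\phi|\leq\rho'$ I split the Gaussian expectation according to whether every $|L^{-\hf}\phi+z_x|$ stays below $\rho$. The bad event $E_\phi^c$ has probability at most $L^3 e^{-c\lambda^{-(1+\delta)/2}}$ by Gaussian concentration (doubly exponentially small in $\lambda^{-1}$), so its contribution is absorbed into $w'$. On the main event $E_\phi$ I factor out the tree-level piece and write
\[
(\caR g)(\phi) \;=\; e^{-L^3 v(L^{-\hf}\phi)}\,\E\bigl[e^{-\sigma(\phi,z)}\mathbf{1}_{E_\phi}\bigr],\qquad \sigma(\phi,z):=\sum_{x\in\Lambda_1}\bigl(v(L^{-\hf}\phi+z_x)-v(L^{-\hf}\phi)\bigr),
\]
which generalises the $\sigma$ of \eqref{rgmap1}. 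Applying the cumulant expansion up to order $k$ large enough that $(\lambda\rho^2)^{k+1}=\lambda^{(k+1)(1-\delta)/2}\ll\lambda^{3+\delta}$, each cumulant $\kappa_l(\sigma)$ is a polynomial in $\phi^2$: only even powers of $\phi$ survive because $v^{(j)}$ has parity $j$ (modulo $w$) while the Gaussian moment $\E\prod z_{x_i}^{j_i}$ vanishes unless $\sum j_i$ is even. Collecting the coefficients of $\phi^{2i}$ for $0\leq i\leq 6$ recovers $u_i'$ with the structure \eqref{ass3}-\eqref{p4ite}; the polynomial remainder of degree $>12$, the cumulant remainder, the tail on $E_\phi^c$, and the contribution of $w$ (which enters $\sigma$ linearly and produces a term of size $O(L^3\lambda^{3+\delta})=L^{-\delta}(\lambda')^{3+\delta}$) combine into $w'$ satisfying \eqref{ass4} provided $\delta$ is taken small enough.

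\textbf{Sign of $a_4$ and main obstacle.} The crucial and most delicate point is the negativity of $a_4$ in \eqref{lite}, which governs whether the $\phi^4$ coupling grows or decreases under the flow. At order $\lambda^2$ the only source of $\phi^4$ in the cumulant expansion is the $l=2$ term $-\tfrac12\kappa_2(\sigma)$ (the additional minus sign coming from $v'=-\log(\caR g)$); computing the variance of the $\phi^2$-part of $\sigma$ gives
\[
a_4\,\phi^4\lambda^2 \;=\; -\tfrac12\,\mathrm{Var}\!\Bigl(6\lambda L^{-1}\phi^2\!\!\sum_{x\in\Lambda_1}\! z_x^2\Bigr) \;=\; -18\,\lambda^2 L^{-2}\phi^4\,\mathrm{Var}\!\Bigl(\sum_{x\in\Lambda_1}\! z_x^2\Bigr)\;<\;0.
\]
This negativity is the hierarchical counterpart of the asymptotic-freedom structure of the $\Phi^4_3$ flow near the Gaussian fixed point and is what is needed for the RG iteration to close in the subsequent sections. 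The other $a_i$ are computed by the same mechanism as explicit Gaussian joint moments in $\{z_x\}_{x\in\Lambda_1}$, and their signs play no role in the analysis that follows.
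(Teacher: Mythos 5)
Your proof follows the same overall architecture as the paper's: a split into small- and large-field regions, a cumulant expansion for $\caR g$ in the small-field region with the cut-off indicator, and the same identification of $a_4 = -\tfrac{18}{L^2}\operatorname{Var}\!\bigl(\sum_{x\in\Lambda_1}z_x^2\bigr)<0$ via the second cumulant of the $6\lambda L^{-1}\phi^2 z_x^2$ piece of $\sigma$. The small-field part is a compressed but essentially faithful account of the paper's steps (the paper additionally emphasises the ``magic'' lower bound $\tau\geq -\caO(\lambda^{\hf(1-\delta)})$ from \eqref{magic}, which you should invoke explicitly to justify bounding the Taylor remainder $\E\bigl[|\tau|^{k+1}e^{-t\tau}\chi\bigr]$ by $\caO((\lambda\rho^2)^{k+1})$; you also need to note, as the paper does, that this expansion is carried out up to $|\phi'|\leq\hf L^{1/2}\rho$, not merely $|\phi'|\leq\rho'$).

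The large-field argument, as written, has a step that fails. You insert the pointwise bound $g(\phi)\leq e^{-\hf\lambda\phi^4+O(1)}$ into the product over the $L^3$ sites of $\Lambda_1$ and claim to obtain $(\caR g)(\phi')\leq e^{-\hf L\lambda(\phi')^4}$, silently dropping a factor $e^{O(L^3)}$. Near $|\phi'|=\rho'$ the available slack is
\begin{align*}
\hf\bigl(L\lambda-\lambda'\bigr)(\rho')^4 \;=\; \hf\,|a_4|\,\lambda^2(\lambda')^{-(1+\delta)}\bigl(1+\caO(\lambda)\bigr) \;=\; \caO_L\!\bigl(\lambda^{1-\delta}\bigr),
\end{align*}
which tends to zero, so a fixed constant $C L^3$ in the exponent cannot be absorbed and the claimed bound $g'\leq e^{-\hf\lambda'(\phi')^4}$ fails there. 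There are two ways to repair this. One is to sharpen your per-site bound: provided $u_0=\caO(\lambda)$ (which is guaranteed in the iteration by the normalisation $v_{n,0}=0$ imposed in Proposition~\ref{rgiteartion}), one in fact has $v(\phi)\geq\hf\lambda\phi^4-\caO(\lambda)$ on $|\phi|\leq\rho$, so the offending factor is $e^{\caO(L^3\lambda)}$, and the absorption reduces to $\lambda^\delta\leq c(L)$, true for $\lambda\leq\bar\lambda$. The other, which is the paper's route, is to split the large-field region: for $\rho'\leq|\phi'|\leq\hf L^{1/2}\rho$ the small-field expansion already gives $g'=e^{-v'}$ with $v'\geq\hf\lambda'(\phi')^4$; and for $|\phi'|>\hf L^{1/2}\rho$ one introduces a cut-off $\tilde\chi$, uses the hypotheses \emph{verbatim} (with no extra constant) on $\tilde\chi$, and uses the Gaussian tail $\E(1-\tilde\chi)\leq L^3 e^{-c\rho^2}$ to absorb the $C(L)$ coming from the crude bound on the complement. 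You should adopt one of these fixes; as it stands the claimed inequality is not justified for $|\phi'|$ near $\rho'$.
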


 \noindent{\it Remark on constants}. $L$ is fixed and $\lambda<\lambda(L)$ is taken small enough. 
 Generic constants $C,c$ do not depend on $L$. We use $e^{-c(L)\rho^2}=\caO_{{n}}(\lambda^n)$ for all $n$. 

\begin{proof}
\ni
\textbf{1. Small field region. } 
 Our goal is to control $g'(\phi') = (\caR g)(\phi')$  when $|\phi'|$ is assumed to satisfy $|\phi'|\leq \rho':= (\lambda')^{-\frac{1}{4}(1+\delta)}$.  Since $\lambda'$ is not yet fixed at this stage, we instead suppose that  $|\phi'|\leq \hf L^\hf\rho$ 
 and we will make sure below that this condition is less restrictive than $|\phi'|\leq \rho'$ with a suitable choice of $\lambda'$. 

Let us now determine the function  $v'$ on this set $|\phi'|\leq \hf L^\hf\rho$. Let  $\chi(z)$ be the indicator of the event   $|z_x|\leq \hf\rho$ for all $x$ and define
\begin{equation*}
{ g'_\chi(\phi')}:=\E\big(\chi(z)\prod_{x\in \Lambda_1}g(\phi_x)\big)
 \end {equation*}
 where $\phi_x:=L^{-\hf}
{\phi'}+z_x$. We define $g_{1-\chi}$ in the same way so that $g'={g'_{\chi}+g'_{1-\chi}}$. 
 Note that on the support of $\chi$ we have $|\phi_x|\leq \rho$ for all $x$. Hence we may use \eqref{ass1a} and \eqref{ass22} to write
 \begin{equation}
{g'_\chi(\phi')}=\E( e^{-\sum_x
v(\phi_x) 
}\chi(z))=e^{-\nu({\phi'})}\E( e^{-\tau({\phi'},z)-\sum_xw(\phi_x)}\chi(z)).
\label{rgmapa}
 \end {equation}
where $\nu({\phi'}):=\sum_{i=0}^6L^{3-i}u_i {\phi'}^{{2i}}$ and
 \begin{equation*}
 \tau({\phi'},z):=\sum_{i=1}^6u_i \sum_x(\phi_x^{2i}-L^{-i}{\phi'}^{2i})=\sigma({\phi'},z)+\kappa({\phi'},z)
 \end {equation*}
 where $\sigma$ is given by \eqref{rgmap1} (with the choice $\lambda \equiv u_2$ and $r\equiv u_1$) and 
  \begin{align} 
 |\kappa({\phi'},z)|&= {| \sum_{i=3}^6 u_i (\sum_x \phi_x^{2i} - L^{-i} (\phi')^{2i})| }
  {\leq C(L)\sum_{i=3}^6 \lambda^{i} \rho^{2i}}=\caO(\lambda^3\rho^6)
 =\caO(\lambda^{\frac{3}{2}(1-\delta)}).
 \label{rgmap1111}
 \end {align}
Using \eqref{magic} we get, under $\chi(z)$,
 \begin{equation}
 \tau({\phi'},z)\geq  
 r\sum_{x\in \Lambda_1}z_x^2\geq -L^3(\hf \rho)^2|r|\geq -\caO(\lambda^{\hf(1-\delta)}).\label{rgmap111}
 \end {equation}
 Hence $e^{-\tau}$ is close to $1$ and we can write
 \begin{equation}
\E\big( e^{-\tau({\phi'},z)-\sum_xw(\phi_x)}\chi(z)\big)=\E\big(  e^{-\tau({\phi'},z)}\chi(z)\big)+ R_1({\phi'})
\label{rgmapaaa}
 \end {equation}
 where 
  \begin{align*}
|R_1({\phi'})|\leq L^3\lambda^{3+\delta}\sup_{t\in[0,1]}\E\big(  e^{-\tau({\phi'},z)-t\sum_xw(\phi_x)}\chi(z)\big)\leq 2L^3\lambda^{3+\delta}\leq 2L^{-\delta}\lambda'^{3+\delta}.
\label{rgmapab}
 \end {align*}
 using \eqref{rgmap111} and  \eqref{ass32}.
Next we expand
 \begin{equation}
\E\big(  e^{-\tau({\phi'},z)}\chi(z)\big)=\sum_{m=0}^6 {\frac{ (-1)^m}{m!} }\E\big( (\tau({\phi'},z))^m\chi(z)\big)+R_2({\phi'})
\label{rgmapb}
 \end {equation}
where using Taylor's expansion with reminder for the function $t\mapsto e^{-t \tau(\phi',z)}$,
\begin{align}
|R_2({\phi'})|\leq C\sup_{t\in [0,1]}\E\big( |\tau({\phi'},z))|^7e^{-t\tau({\phi'},z)}\chi(z)\big)\nonumber
 \end {align}
$\tau({\phi'},z)$ is a polynomial in the variables $z_x$ and $\phi'$ and we claim that it is bounded  by
 \begin{equation}
|\tau({\phi'},z))|\leq C(L)\lambda\rho^2(1+\|z\|_\infty)^{12}.
\label{rgmapd00}
 \end {equation}
 Indeed,  $\tau({\phi'},z))= \sigma({\phi'},z)+\kappa({\phi'},z)$ 
and from the definition of $\sigma$ in \eqref{rgmap1}, one easily checks that 
$$|\sigma({\phi'},z)| \leq C \lambda \rho^2 (1+ \|z\|_\infty^4).
$$
 For  $\kappa({\phi'},z)$ proceed as in\eqref{rgmap1111}:
\begin{align*}\label{}
|\kappa({\phi'},z)|&\leq\sum_{i=3}^6 |u_i| \sum_x |\phi_x^{2i} - L^{-i} (\phi')^{2i}|   \leq C(L)\sum_{i=3}^6 \lambda^{i} \big[(1 +  |\phi'|^{2i-1})(1+\|z\|_\infty^{2i} )\big] \\
 & \leq C(L) \sum_{i=3}^6 \lambda^{i}  \rho^{2i-1}(1+\|z\|_\infty^{2i} )=C(L)(1+\|z\|_\infty^{12}) \sum_{i=3}^6 \lambda^{\frac{2i+1}{4}-{\frac{2i-1}{4}\delta}}
\end{align*}
Since $\lambda\rho^2=\lambda^{\hf(1-\delta)}$ and 
$i\geq 3$ the bound \eqref{rgmapd00} holds for $\kappa({\phi'},z)$.
 
 Combining \eqref{rgmapd00}  and  \eqref{rgmap111}  we get 
\begin{align}
|R_2({\phi'})|
& \leq  C(L) {( \lambda\rho^2)^{7}}\E\big( (1+\|z\|_\infty)^{84}\chi(z)\big)\leq C(L)\lambda^{\frac{7}{2}(1-\delta)} .%
\label{rgmapc}
 \end {align}
 For the sum in \eqref{rgmapb} we note first that 
 we may dispose of the $\chi$ as the probability of large $z$ is small.
 Using \eqref{rgmapd00} get
 \begin{equation}
|R_3({\phi'})|=|\sum_{m=0}^6 {\frac{ (-1)^m}{m!} }\E\big( (\tau({\phi'},z))^m(1-\chi(z))\big)|\leq Ce^{-c(L)\rho^2}
\label{rgmapd1}
 \end {equation}
 The remaining sum is a polynomial that we write as 
 \begin{equation*}
\sum_{m=0}^6 {\frac{ (-1)^m}{m!} }\E\big( (\tau({\phi'},z))^m\chi(z)\big)=1+\sum_{{i=0}}^6 q_i {(\phi')}^{2i}+R_4({\phi'}).
 \end {equation*}
 {Note that higher degree terms appear here since $\tau$ is already of degree 11 in $\phi'$ and we have $\tau(\phi',z)^{12}$ terms here. These higher degree terms are all assigned to the rest term $R_4$, where again}
  \begin{equation}
|R_4({\phi'})|=\caO(\lambda^{\frac{7}{2}(1-\delta)})\label{rgmape}.
 \end {equation}
 This bound comes from the leading term in this rest term $R_4$ which is proportional to $(\lambda {\rho^2})^7$.
 The coefficients $q_i$ are polynomials in the $u_i$, $i=1,\dots, 6$ with coefficients linear combinations of moments of the $z_x$'s. They are bounded by
 $$
 q_0=\caO(\lambda),\ \ q_i=\caO(\lambda^{i}),\ \ i>0
 $$ 
 and in particular
   \begin{align*}
q_1&=b_1u_2
+b_2u_1u_2+b_3u_2^2+\caO(\lambda^3)\\
q_2&=b_4u_2^2+\caO(\lambda^3) .
 \end {align*}
  Using \eqref{rgmap1} we calculate
 \begin{align}\label{e.bi}
b_1 & = -6L^{-1}\sum_{x\in\Lambda_1} \E z_x^2 
=-\frac{6}{L}(L^3-1)
  \nn \\
b_4& = \hf (\frac{ 6}{L})^2 \Eb{(\sum_x z_x^2)^2}  
\end{align}
and $|b_i|\leq C(L)$ for $i=2,3$.
Combining \eqref{rgmapaaa}-
\eqref{rgmape} we conclude
 \begin{equation}
{g'_\chi(\phi')}=e^{-\nu({\phi'})}
(1+\sum_{{i=0}}^6 q_i{\phi'}^{2i}+R_5({\phi'}))
\label{rgmapf3}.
 \end {equation}
 with
\begin{equation}
|R_5({\phi'}))|=|\sum_{i=1}^4R_i({\phi'}))|\leq 3L^{-\delta}(L\lambda)^{3+\delta}.
\label{rgmapg3}
 \end {equation}
provided $\delta$ is small enough s.t. $\frac{7}{2}(1-\delta)>3+\delta$.
Next we study ${g'_{1-\chi}(\phi')}$. Since $\|g\|_\infty <C$ (N.B. this is a direct consequence of our set of assumptions on $g$) we get
  \begin{equation}
{g'_{1-\chi}(\phi')}\leq C(L)
\E\, (1-\chi(z)) \leq e^{-c(L)\rho^2}.
\label{rgmapk}
 \end {equation}
Now in \eqref{rgmapa} $ 
|\nu({\phi'})|\leq CL\lambda\rho^4 \, {\ll} \, c(L)\rho^2$ so that 
 \begin{equation}
R_6({\phi'}):={g'_{1-\chi}(\phi')e^{\nu(\phi')}}\leq e^{-c(L)\rho^2}.
\label{rgmapff3}
 \end {equation}
(We recall here that the value of constants $c(L)$ may change from line to line). Hence combining \eqref{rgmapf3} and  \eqref{rgmapff3} we conclude
 \begin{equation}
(\caR g)({\phi'})={g'_\chi(\phi')+g'_{1-\chi}(\phi')}=e^{-\nu({\phi'})}
(1+\sum_{{i=0}}^6 q_i{\phi'}^{2i}+R_7({\phi'}))
\label{rgmapf}.
 \end {equation}
 where  \eqref{rgmapg3} holds for
 $R_7$ as well, with say 4 instead of 3 on the RHS. 
 Taking logarithm  we arrive at
  \begin{equation}
v'({\phi'})=-\log(\caR g)({\phi'})=\nu({\phi'})+\sum_{{i=0}}^6 p_i {(\phi')}^{2i}+R_8({\phi'})
\label{rgmapf}.
 \end {equation}
where the $p_i$ are are polynomials in the $q_i$.  The claims for $v'(\phi')$ now follow in the region  $|\phi'|\leq \hf L^\hf\rho$ and {thus in particular in the smaller region $|\phi'|\leq \rho'$}. 
Using $-\log(1+x)=-x+ \tfrac 1 2  x^2 + \caO(x^3)$, we obtain
\begin{align*}\label{}
p_2&=-q_2 + \tfrac 1 2 q_1^2+ \caO(\lambda^3)=(- b_4+\hf b_1^2)\lambda^2+ \caO(\lambda^3)
\end{align*}
and from 
\eqref{e.bi} we get
\begin{align*}\label{}
a_4=- b_4+\hf b_1^2=-\frac{18}{L^2} \Var{X}
\end{align*}
where $X=\sum_{x\in\Lambda_1} z_x^2$. Positivity of the variance implies that $a_4$
 indeed is negative. 

For $p_1$ we obtain, writing $q_0=b_0\lambda+\caO(\lambda^2)$,
\begin{align*}\label{}
p_1&= - q_1+q_0q_1+\caO(\lambda^3)=-b_1\lambda+b_0b_1\lambda^2+\caO(\lambda^3)
\end{align*}
This gives us
\begin{align}\label{e.a1}
a_1&=6L^{-1}(L^3-1) .
\end{align}
Note  also that $a_1>0$.

\medskip
\ni
\textbf{2. Large field region. } 
 Suppose now ${|\phi'|}\geq \rho'$. If $\rho'\leq |{\phi'}|\leq   {\tfrac 1 2}  L^\hf\rho$
the claim \eqref{ass1} for $g'$ follows from our bounds for $v'$ so let $ |{\phi'}|\geq  {\tfrac 1 2} L^\hf\rho$. Under a slightly modified $\tilde \chi$ (for example $\tilde \chi(z)$ being the indicator function of the event that $|z_x| \leq {\tfrac 1 4} \rho$ for all $x$),  we then have $|\phi_x|\geq {\tfrac 1 4 }\rho$. We now distinguish two cases for each $x$: 
\bi
\item[1)] First, if $\tfrac 1 4 \rho \leq |\phi_x| \leq \rho$  then $ 
v(\phi_x)\geq\hf\lambda\phi_x^4$ {(this is because $u_0+u_1 \phi^2 \ll \lambda \phi^4$ in this regime)}.
\item[2)] Second, if $|\phi_x|\geq \rho$, then we have $|g(\phi_x)| \leq e^{-\frac  1 2 \lambda  \phi_x^4}$ 
\ei

{This allows us to obtain} 
\begin{equation*}
{g'_{\tilde \chi}(\phi')}\leq \E\big(e^{-\hf\lambda\sum_{x\in \Lambda_1}\phi_x^4} {\tilde \chi}(z)\big). 
 \end {equation*}
{Now using the same expansion based on $\phi_x = L^{-1/2} \phi' + z_x$ as in} \eqref{magic} we get
\begin{equation}
g'_{\tilde \chi}(\phi')\leq  e^{-\hf L\lambda{\phi'}^4}\E\big(e^{-c(L)\lambda \rho^2\sum_xz_x^2}\big)\leq  e^{-\hf \lambda' {\phi'}^4-c(L)\lambda\rho^2}
\label{rgmap5}
 \end {equation}
 since $\lambda'<L\lambda$ ($a_4<0$ in \eqref{lite}).  
 \medskip
 Finally consider ${g'_{1-{\tilde \chi}}}$.
Under $1-{\tilde \chi}$,  $\phi_x$ may be small so  we use the bound 
$$g(\phi_x)\leq C e^{-\hf\lambda\phi_x^4
}$$ 
valid for all  $\phi_x$. {Similarly as above} we thus obtain
 \begin{equation}
{g'_{1-\tilde \chi}}({\phi'})\leq  C(L)e^{-\hf L\lambda {\phi'}^4
}\E(1- {\tilde\chi}))\leq  e^{-\hf \lambda' {\phi'}^4}e^{-c(L)\rho^2}.
\label{rgmap6}
 \end {equation}
 Combining \eqref{rgmap5} and \eqref{rgmap6} yields the claim.  
 \end{proof}

\subsection{RG Iteration.}

We now take $g_N^{(N)}:=e^{-v_N^{(N)}}$ with  
\begin{equation*}
v_N^{(N)}(\phi)=p_N^{(N)}+r_N^{(N)}\phi^2+\lambda_N^{(N)}\phi^4
 \end {equation*}
 with $\lambda_N^{(N)}=L^{-N}\lambda$. Denote 
  \begin{align*}
\caR^{N-n}e^{-v_N^{(N)}}:=g_n^{(N)}.
\end{align*}
 We want to determine the coefficient $r_N^{(N)}$ so that $\lim_{N\to\infty}g_n^{(N)}$ exists. This is done using the recursion \eqref{p0ite} - \eqref{p4ite}. 
However it is useful to reorganise it a bit by introducing the linearized RG map $\caL$ given by $\caL v=\frac{d}{dt}|_0\caR(e^{tv})$ i.e. concretely
\begin{align*}
(\caL v)(\phi')=\E \sum_{x\in\Lambda_1}v(L^{-\hf}\phi'+z_x).
\end{align*}
 Its eigenfunctions in the vector space of polynomials are the Wick powers (Hermite polynomials)
 \begin{align*}
:(\phi)^{m}:&=\frac{d^{m}}{dt^{m}}|_{t=0}e^{t\phi-\frac{t^2}{2}G_{xx}}
\end{align*}
 where $G_{xx}$ satisfies $G_{xx}=L^{-1}G_{xx}+\Gamma_{xx}$ i.e. $G_{xx}=(1-L^{-1})^{-1}(1-L^{-3})$. Indeed, then
 \begin{align*}
\E:(L^{-1}\phi'+z_x)^{m}:&=\frac{d^{m}}{dt^{m}}|_{t=0}e^{tL^{-\hf}\phi'-\frac{t^2}{2}G_{xx}}\E e^{tz_x}=\frac{d^{m}}{dt^{m}}|_{t=0}e^{tL^{-\hf}\phi'-\frac{t^2}{2}(G_{xx}-\Gamma_{xx})}\\&
=\frac{d^{m}}{dt^{m}}|_{t=0}e^{tL^{-\hf}\phi'-\frac{t^2}{2}L^{-1}G_{xx}}=L^{-\frac{m}{2}}:(\phi)^{m}:
\end{align*}
so that
\begin{align*}
\caL :\phi^m: =L^{3-\frac{m}{2}}:\phi^m:.
\end{align*}
\begin{remark}
The reason for this notation is as follows. Let $\{\phi_x\}_{x\in\Z^3}$ be the infinite volume hierarchical GFF i.e. the Gaussian field with covariance 
 \begin{align*}
G_{xy}=\E\phi_x\phi_y=\sum_{m=0}^\infty L^{-m}\Gamma_{[\frac{x}{L^m}][\frac{y}{L^m}]}
\end{align*}
Then $\phi\stackrel{law}=L^{-\hf}\phi'+z$ where $\phi\stackrel{law}=\phi'$ and $\phi'\perp z$.
\end{remark}

We can use $ :\phi^m: $ as a basis for polynomials and write
\begin{align*}
v_n^{(N)}(\phi)=\sum_{i=0}^6v_{n,i}^{(N)}:\phi^{2i}: +\, w^N_n(\phi)
\end{align*}

If $g_n^{(N)}$ satisfies the assumptions of Proposition \ref{rgmap} then the coefficients  $u_{n,i}^{(N)}$ are linear combinations of $v_{n,i}^{(N)}$. Since $\phi^{2i}=:\phi^{2i}:+\sum_{j=0}^{i-1}c_j:\phi^{2j}:$ so the estimates \eqref{ass322} hold for $v_{n,i}^{(N)}$ as well. However, since linear part of the map $\{v_{n,i}^{(N)}\}\to \{v_{n-1,i}^{(N)}\}$ is now diagonal (given by $\{v_{n,i}^{(N)}\}\to L^{3-i}\{v_{n-1,i}^{(N)}\}$) the equation \eqref{ass3} is replaced by 
\begin{align}\label{ass3new}
v_i'&=L^{3-i}v_i+q_i({\mathbf{v}})
\end {align}
where
 \begin{align}\label{p0itenew}
q_0({\mathbf{v}})=Q_0(v_1,v_2)+\caO(\lambda^4)
\end{align}
and $Q_0$ is a polynomial with quadratic and cubic terms  and   
\begin{align}
q_1({\mathbf{v}})&=\alpha_2v_1v_2+\alpha_3v_2^2+\caO(\lambda^3)\label{ritenew}\\
q_2({\mathbf{v}})&=\alpha_4v_2^2+\caO(\lambda^3)\label{litenew}\\
q_3({\mathbf{v}})&=\alpha_5v_2^3 +\caO(\lambda^4),\label{p3itenew}\\
q_i({\mathbf{v}})&=\caO(\lambda^i),\ \ i>3.\label{p4itenew}
 \end {align}
 Note the absence of $\caO(\lambda)$ term in the mass term $v_1'$ due to the Wick ordering in the $v_2$ term. 
 
 The iteration of \eqref{ass3new}  seems to run into trouble for the $i=0,1$ terms as they expand under the linearized RG. The problem is that their expansion rate is faster or as fast as that of  the $i=2$ term. Indeed, the $v_2$ iteration suggests that we start with $v_{2,N}^{(N)}=L^{-N}\lambda :=\lambda_N$ and expect $v_{2,n}^{(N)}=\lambda_n+\caO(\lambda_n^2)$. In that case  $v_{1,N-1}^{(N)}$ receives a contribution in \eqref{ritenew} of order $\lambda_N^2$ so we are motivated to write  $v_{1,n}^{(N)}=s_n\lambda_n^2+r_n$ where $r_n\lambda_n^{-2}$ is subleading.Then \eqref{ritenew} gives $s_{n-1}=s_n+\alpha_3$ to leading order so that we would have $s_n\sim (N-n)\alpha_3$ ruining our ansatz. The solution is to fine tune the initial condition by taking  $s_N=N\alpha_3$ whereby $v_{1,n}^{(N)}=n\alpha_3\lambda_n^2$ to leading order. This the familiar second order mass counter term for the $\varphi^4_3$ QFT, see Remark \ref{} below. We have then

 \begin{proposition}\label{rgiteartion}
  Let $\lambda>0$ and define
\begin{align}\label{e.functionR}
r(\lambda):= \alpha_3\lambda^2\log_L\lambda
 \end{align}
and set
 \begin{align*}
g_N^N(\phi)=e^{-r_N:\phi^2:-\lambda_N:\phi^4:}
\end{align*}
 with $\lambda_N=L^{-N}\lambda$ and $r_N=r(\lambda_N)$. Define inductively
 \begin{align*}
g^N_{n-1}=e^{-\epsilon^N_n}\caR g^N_{n}
\end{align*}
 where $\epsilon^N_n$ is defined by requiring 
 $$v_{n-1}^N=\sum_{i=1}^6v_{n-1,i}^N:\phi^{2i}:+w_n^N(\phi)
 $$ 
 (i.e.$v_{n-1,0}^N=0$).
 Then
 the following holds for $n$ sufficiently large and $\delta$ sufficiently small.
 \begin{align*}
 &|v_{n,1}^N-r(\lambda_n)|\leq \lambda_n^{\frac{5}{2}}\\
 &v_{n,2}^N=\lambda_n+\caO(\lambda_n^2),\ \ v_{n,i}^N=\caO(\lambda_n^i),\ \ i>2\\
 &|w_n^N(\phi)|\leq \lambda_n^{3+\delta},\ \ {\rm for}\ \ |\phi|\leq\rho_n\\
 &g_n^N(\phi)\leq e^{-\hf\lambda_n\phi^4},\ \ {\rm for}\ \ |\phi|\geq\rho_n
\end{align*}
where $\rho_n:= \lambda_n^{-\tfrac 1 4(1+\delta)}$. 
Furthermore, the limit $g_n=\lim_{N\to\infty}g_n^N$ exists as well as the limits  of $v_{n,1}^N$ and $w_n^N$ and $\epsilon_n^N$. In particular
\begin{align}\label{epsin}
\epsilon_n=12\lambda_n^2(\sum_{x,y\in\Lambda_1} (G_{xy})^4-L^2(G_{00})^4)+\caO(\lambda_n^3)
\end{align}
\end{proposition}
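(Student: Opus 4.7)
The plan is a downward induction from $n=N$ to $n=1$ that propagates the four quantitative invariants by one application of Proposition~\ref{rgmap} at each step, followed by a comparison argument to pass to the limit $N\to\infty$. At the base case $n=N$, the invariants hold by inspection of the ansatz $g_N^N$: one reads off $v_{N,2}^N=\lambda_N$, $v_{N,1}^N=r(\lambda_N)$, higher Wick coefficients and $w_N^N$ vanish, and on $|\phi|\geq\rho_N$ the mass term $r_N\phi^2=O(\lambda_N^2|\log\lambda_N|\phi^2)$ is dwarfed by $\tfrac12\lambda_N\phi^4$. For the inductive step $n\to n-1$, I would switch from the Wick basis to the monomial basis via a triangular change of variables whose entries are controlled by powers of $G_{00}$, apply Proposition~\ref{rgmap} to $g_n^N$, switch back, and fix $\epsilon^N_n$ by the condition $v_{n-1,0}^N=0$. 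On the stable and weakly expanding directions $i\geq 2$, the recursion~\eqref{ass3new} together with $\lambda_{n-1}=L\lambda_n$ immediately yields $v_{n-1,2}^N=\lambda_{n-1}+O(\lambda_{n-1}^2)$ and $v_{n-1,i}^N=O(\lambda_{n-1}^i)$; the large-field bound and the $\lambda^{3+\delta}$ bound on $w^N_{n-1}$ are inherited from the corresponding clauses of Proposition~\ref{rgmap}, taking $\delta$ small enough that the $L$-dependent constants are absorbed.

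The hard part is the mass coefficient $v^N_{n,1}$, which lives on the unstable eigenspace of expansion rate $L^2$. From~\eqref{ritenew} the recursion reads
\begin{equation*}
v_{n-1,1}^N = L^2 v_{n,1}^N + \alpha_3(v_{n,2}^N)^2 + \alpha_2 v_{n,1}^N v_{n,2}^N + O(\lambda_n^3),
\end{equation*}
so the inductive hypothesis supplies a forcing $\alpha_3\lambda_n^2+O(\lambda_n^{5/2})$ (the cross-term $\alpha_2 r(\lambda_n)\lambda_n=O(\lambda_n^3|\log\lambda_n|)$ is subleading). The counter-term $r(\lambda)=\alpha_3\lambda^2\log_L\lambda$ is designed so that $r(\lambda_{n-1})-L^2 r(\lambda_n)$ matches $\alpha_3\lambda_n^2$ up to an $O(\lambda_n^2)$ constant shift that only affects the subleading ambiguity in $r$ and is absorbed into the remainder. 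Writing $v_{n,1}^N=r(\lambda_n)+\delta_n$ with $\delta_N=0$, the recursion reduces to $\delta_{n-1}=L^2\delta_n+O(\lambda_n^{5/2})$, which iterated downward from $N$ gives
\begin{equation*}
|\delta_n|\leq C\sum_{k=n+1}^N L^{2(k-n-1)}\lambda_k^{5/2}=O(\lambda_n^{5/2}),
\end{equation*}
since $\lambda_k^{5/2}=L^{5(n-k)/2}\lambda_n^{5/2}$ and $L^{2-5/2}=L^{-1/2}<1$ makes the resulting geometric series convergent; the overall constant is brought below $1$ by taking $n$ sufficiently large, as permitted by the hypothesis.

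To pass to the limit $N\to\infty$, I would fix $n$ and compare the two initial data $g_N^N$ and $e^{-\epsilon^{N+1}_{N+1}}\caR g_{N+1}^{N+1}$ at scale $N$ by running the RG in parallel. Proposition~\ref{rgmap} applied to $g^{N+1}_{N+1}$ shows the coefficient-wise discrepancy is $O(\lambda_N^2)$ and the error-function discrepancy is $O(\lambda_N^{3+\delta})$; propagating downward, the stable directions contract geometrically and the unstable mass direction is restabilised by repeating the fine-tuning argument for the \emph{difference} (whose counter-term forcing already matches, so no further shift of $r$ is required). This makes $\{v_n^N,w_n^N,\epsilon_n^N\}_N$ Cauchy in the relevant norm and extracts the limits $g_n,\,v_{n,i}^\infty,\,w_n^\infty,\,\epsilon_n$. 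Finally, the explicit formula~\eqref{epsin} is obtained by isolating the $l=2$ cumulant contribution in the expansion~\eqref{rgmap2} at $\phi=0$ on the limiting $g_{n+1}$: the kernel $12\lambda_n^2\sum_{x,y\in\Lambda_1}G_{xy}^4$ arises from Wick-pairing the Gaussian factors in $\tfrac12\E\sigma(0,z)^2$ and collecting the scale-accumulated contribution to the full hierarchical covariance, while the $-L^2 G_{00}^4$ subtraction is the diagonal $x=y$ piece cancelled by the Wick ordering of the initial $:\phi^4:$.
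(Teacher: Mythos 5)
Your proposal follows essentially the same route as the paper: the four quantitative invariants are propagated scale by scale by one application of Proposition~\ref{rgmap} in the Wick basis, the mass counter-term $r(\lambda)=\alpha_3\lambda^2\log_L\lambda$ is tuned so that the forced $L^2$-recursion for $v_{n,1}^N-r(\lambda_n)$ becomes a contracting geometric sum, and convergence in $N$ is obtained by comparing $g^{N+1}_n$ to $g^N_n$ running the RG in parallel (the paper's bookkeeping tracks $\delta v_{i,n}^N$, $\delta w_n^N$, $\delta g_n^N$ explicitly, with exponents chosen to absorb the $L^2$ expansion of the mass direction). Your closed-form sum $\sum_{k>n}L^{2(k-n-1)}O(\lambda_k^{5/2})=O(\lambda_n^{5/2})$ is the same estimate the paper obtains by propagating $|\eta'|\leq L^{-\hf}(\lambda')^{5/2}+O(\lambda^3)$ one step at a time.

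Two points deserve correction. First, in the mass fine-tuning you write that $r(\lambda_{n-1})-L^2 r(\lambda_n)$ matches $\alpha_3\lambda_n^2$ ``up to an $O(\lambda_n^2)$ constant shift that \dots\ is absorbed into the remainder.'' A genuine $O(\lambda_n^2)$ per-step mismatch cannot be absorbed into the $\lambda_n^{5/2}$ remainder: iterating $\delta_{n-1}=L^2\delta_n+c\lambda_n^2$ gives $\delta_n\sim (N-n)\lambda_n^2$, which diverges with $N$. The actual point is that the leading quadratic forcing in \eqref{ritenew} and the telescoped counter-term increment $r(\lambda_{n-1})-L^2r(\lambda_n)$ cancel \emph{exactly} at order $\lambda_n^2$ (once the $L$-dependent normalisation of $\alpha_3$ is tracked consistently), leaving a remainder of size $O(\lambda_n^3\log\lambda_n^{-1})\ll\lambda_n^{5/2}$; an additive $c\lambda^2$ ambiguity in $r$ contributes nothing to the difference $r(L\lambda)-L^2r(\lambda)$ and hence is invisible, which is the sense in which only the $\lambda^2\log\lambda$ term is forced. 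Second, your derivation of \eqref{epsin} is not what the paper does and as stated does not give the right answer: $\epsilon_n$ is not obtained by evaluating at $\phi'=0$ but by taking the Gaussian expectation $\E'$ over $\phi'$ with variance $G_{00}$ (this is what extracts the constant Wick coefficient, since $\E':(\phi')^{2i}:=0$ for $i\geq1$, whereas $:\!0^{2i}\!:\neq 0$). The term $\sum_{x,y\in\Lambda_1}G_{xy}^4$ then comes from $\tilde\E\big(\sum_x:\phi_x^4:\big)^2$ under the full hierarchical covariance, and the subtraction $L^2G_{00}^4$ comes from $\E'\big(\E\sum_x:\phi_x^4:\big)^2=L^2\E'(:(\phi')^4:)^2$, i.e.\ from the squared linearised-RG mean, not from the diagonal $x=y$ piece of the double sum.
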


\begin{remark}\label{}
Notice that the result holds for any positive value of $\lambda>0$. This is not so surprising in the present UV setting (as opposed to the IR setting where $\lambda<\lambda_0\ll 1$ is a very standard choice). We still emphasize this point as many works in the 80's handle UV and IR simultaneously and as such would need $\lambda$ small. 
\end{remark}

\begin{remark}
We also point out that if one wanted to consider instead of~\eqref{e.functionR} an initial mass term 
\begin{align*}\label{}
r(\lambda):= \alpha_3\lambda^2\log_L\lambda + m^2 
\end{align*}
with a free parameter $m^2$, then by keeping the same setup as in our proof, this would also lead to a well defined limiting field which would also be singular w.r.t. GFF. We only stick to $m^2=0$ here for simplicity. 
\end{remark}

\begin{proof} We consider $N\geq n$ and $n$ large enough so that $\lambda_n \leq \bar\lambda$  where $\bar\lambda$ is as in Proposition \ref{rgmap}
Let $g$ stand for  $g_n^N$  and $g'$ stand for  $g_{n-1}^N$. Write $v_1=r(\lambda)+\eta$ and $v'_1=r(\lambda')+\eta'$ where $\lambda'=L\lambda$ and  $|\eta|\leq \lambda^2$. We claim $|\eta'|\leq {\lambda'}^2$. The iteration reads
\begin{align*}
r(L\lambda)+\eta'=L^2r(\lambda)+L^2\eta+\alpha_3\lambda^2+\caO(\lambda^3\log\lambda^{-1}).
\end{align*}
Using 
\begin{align*}
r(L\lambda)=\alpha_3L^2\lambda^2\log_L(L\lambda)+\caO(\lambda^3)
\end{align*}
we infer $\eta'=L^2\eta+\caO(\lambda^3\log\lambda^{-1})$ so that $|\eta'|\leq L^{-\hf}{\lambda'}^{5/2}+\caO(\lambda^3)$ and the claim follows.

\medskip
\noindent 
For the convergence, let us denote $\delta g_{n}^N=g_{n}^{N+1}-g_{n}^N$ and similarly for $\delta v_{i,n}^N$ and  $\delta w_{n}^N$.

\smallskip
\ni
\textbf{(a) Initialisation.} 
For $n=N$ we have
\begin{align*}
|\delta v_{1,N}^N|\leq \lambda_N^{\frac{5}{2}},\ \  |\delta v_{i,N}^N|\leq C(L)\lambda_N^{i}, \ \ i>1, \ \ \sup_{|\phi|\leq\rho_N}|\delta w_N^N(\phi)|\leq \lambda_N^{3+\delta}.
\end{align*}
For $|\phi|\geq \rho_N$, we note that in the estimate \eqref{1stlarge} we may improve a bit to get
\begin{align}\label{deltagN}
| g^{N+1}_N(\phi)|\leq e^{-c(L)\rho_N^4}e^{-\hf\lambda_N\phi^4}\leq e^{-c(L)L^{4N\delta}}e^{-\hf\lambda_N\phi^4}
\end{align} 
and $g_{N}^N$ obviously satisfies the same estimate and thus \eqref{deltagN} holds for $\delta g_{N}^N$ as well.  Let us fix $\epsilon=L^{-\hf\delta}$.

\smallskip
\ni
\textbf{(b) Induction.} 
We prove inductively the following:
\begin{align}\label{deltavNN}
|\delta v_{1,n}^N|\leq \epsilon\lambda_n^{\frac{9}{4}},\ \  |\delta v_{i,n}^N|\leq \epsilon \lambda_n^{i-\hf}, \ \ i>1, \ \ \sup_{|\phi|\leq\rho_n}|\delta w_n^N(\phi)|\leq  \epsilon\lambda_n^{3+\hf\delta}
\end{align}
and  
\begin{align}\label{deltagNN}
\text{for all } |\phi| \geq \rho_n \text{ one has } | \delta g^{N}_n(\phi)|\leq \epsilon\lambda_n^4e^{-\hf\lambda_N\phi^4}
\end{align}

By the preceding discussion these bounds hold for $n=N$.

\smallskip
\ni
\textbf{(c) Induction within the small field region.}
We follow exactly the same setup as in the proof of Proposition \ref{rgmap} in the previous subsection and first consider the small field region. (Will shall only sketch the few adaptations).  First, going from scale $n$ to scale $n-1$,  the small field region corresponds to assume that
$|\phi'| \leq \hf L^\hf \rho_{n} $ (which is a much wider window than $|\phi'| \leq \rho_{n-1}$). 

The first two bounds in \eqref{deltavNN} are straightforward since they contract under the linear RG. For the third term, consider $\delta v'$ where $ v'$ is given by \eqref{rgmapf}. It suffices to bound $\delta R_8$ as we did while proving Proposition \ref{rgmap}. 
We start with $\delta R_1$. Using the bounds \eqref{rgmap1111} and \eqref{rgmap111} 
$$|\delta e^{-\tau}|\leq C|\delta\tau|e^{-r\sum_{x\in \Lambda_1}z_x^2}
$$
with
 \begin{equation}
 |\delta\tau|\leq \caO(\epsilon\lambda^{\frac{3}{2}}\rho^2)(1+\|z\|_\infty)^{12}
.
\label{rgmapd000}
 \end {equation}
 (where the $\epsilon\lambda^{\frac{3}{2}}$ comes from $\delta v_2$, other contributions being smaller)
so that (recall that $\lambda\rho^2=\lambda^{\hf(1-\delta)}$)
\begin{equation*}
\E \big(|\delta e^{-\tau({\phi'},z)}|\chi(z)\big)=\caO(\epsilon\lambda^{1-\hf\delta}).
 \end {equation*}
Using the bounds for $w$ and $\delta w$ we end up with
 \begin{equation}
|\delta R_1(\phi')|\leq( L^3+\caO(\lambda^{1-\hf\delta}))\epsilon\lambda^{3+\hf\delta}
\label{rgmapaaaaa}
 \end {equation}
For $\delta R_2$ we obtain using \eqref{rgmapd00} and \eqref{rgmapd000}
\begin{align}
|\delta R_2(\phi')|\leq C\E\big(|\delta \tau|(|\tau|+|\tilde\tau|)^6e^{C\lambda\sum z_x^2}\big)=\caO(\epsilon \lambda^{3(1-\delta)} \lambda^{(1-\delta/2)})=\caO(\epsilon\lambda^{4-\frac{7}{2}\delta}).
\label{rgmapcccc}
 \end {align}
Obviously $\delta R_3=\caO(\epsilon e^{-c(L)\rho^2})$ and $\delta R_4$ satisfies \eqref{rgmapcccc} so that we end up with 
\begin{equation}
|\delta R_5(\phi')|\leq 2L^{-\delta}\epsilon\lambda'^{3+\frac{\delta}{2}}.
\label{rgmapg}
 \end {equation}
Finally  \begin{equation}
\delta R_6(\phi')=\caO( \epsilon e^{-c(L)\rho^2}).
\label{rgmapff}
 \end {equation}
 The claims for $\delta v'(\phi')$ then follow. 

\medskip
\ni
\textbf{(d) Induction within the large field region.}

Let now $|\phi'|\geq  \hf L^\hf \rho_{n}$ and consider $\delta g'_{\tilde \chi}$ exactly as in the previous section.
Telescoping (as in the identity $\tilde a\tilde b \tilde c - a b c  = (\tilde a-  a) \tilde b \tilde c + a( \tilde b-  b) \tilde c + a b (\tilde c- c)$), we have
\begin{equation*}
\delta g'_{\tilde \chi}(\phi'):=\sum_x\int \delta g(\phi_x)\prod_{y\prec x}g(\phi_y)\prod_{u\succ x}\tilde g(\phi_u)\tilde \chi(z)\mu(dz)
 \end {equation*}
where $\prec$ is any fixed ordering of $\Lambda_1$.

As we did in the previous section, notice we have $|\phi_x|\geq {\tfrac 1 4 }\rho_n$ and we again distinguish two cases for each $x$: 
\bi
\item[1)] First, if $\tfrac 1 4 \rho_n \leq |\phi_x| \leq \rho_n$, we may use the upper bound
\begin{equation*}
|\delta g_{\tilde \chi}(\phi_x)|\leq  C |\delta v(\phi_x)|e^{-\lambda\phi_x^4}\leq \caO(\epsilon)e^{-c(L)\lambda^{-\delta}}e^{-\hf \lambda\phi_x^4}\leq \epsilon\lambda^4e^{-\hf \lambda\phi_x^4}\,,
 \end {equation*}
since $|\delta v(\phi_x)|=\caO(\epsilon\lambda^{\frac{3}{2}}\rho^4)=\caO(\epsilon)$.
\item[2)] Second, if $|\phi_x|\geq \rho_n$, then we may use directly the induction hypothesis on $\delta g$ in this regime. 
\ei
We thus conclude
\begin{equation*}
|\delta g'_ {\tilde \chi}(\phi')|\leq L^3\epsilon \lambda^4 \int e^{-\hf\lambda\sum\phi_x^4} \tilde \chi(z)\mu(dz)\leq L^3\epsilon \lambda^4e^{-\hf \lambda' (\phi')^4}.
 \end {equation*}
  Finally  $\delta g'_{1-\tilde \chi}$ is a negligible and the claim for $\delta g'$ follows since $L^3\lambda^4<\hf \lambda'^4$, say.
  \medskip
  
\ni
\textbf{(e) Computation of $\epsilon_n$.}  It equals $-\lim_{N\to\infty} u_{0,n-1}^N$. We will need it to second order in $\lambda_n$. Let us again drop the indices and denote $v_n^N$ by $v$ and $v_{n-1}^N$ before we extract $\epsilon_n^N$ by $v'$. Thus to leading order $v'(\phi')$ equals $\nu(\phi')$ where
\begin{align*}
\nu(\phi')=\E \sum_{x\in\Lambda_1} (r:\phi_x^2+\lambda:\phi_x^4:)+\hf\lambda^2(\big(\E\sum_{x\in\Lambda_1} :\phi_x^4:\big)^2)-\E \big(\sum_{x\in\Lambda_1} :\phi_x^4:)^2\big)+\dots
\end{align*}  
  where $\phi_x=L^{-\hf}\phi'+z_x$ and $\E$ is over $z$.  Let $\phi'\perp z$ be  gaussian with mean zero and variance $G_{00}=(1-L^{-1})(1-L^{-3})$ and denote by $\E'$  its expectation. Under these laws $\{\phi_x\}_{x\in\Lambda_1}$   is the restriction of the gaussian  field on $\Z^3$ to $\Lambda_1$ discussed above with covariance $\tilde\E\phi_x\phi_y=G_{xy}$. Now $\epsilon_n=\E'\nu(\phi')$. Since $\tilde\E:\phi_x^{2i}:=0$ and $\E\sum_{x\in\Lambda_1} :\phi_x^4:=L:{\phi'}^4:$ we get
\begin{align*}
-\epsilon_n&=\hf\lambda_n^2\big(L^2\E' (:{\phi'}^4:)^2-\tilde\E(\sum_{x\in\Lambda_1} :\phi_x^4:)^2\big)\\&=12\lambda_n^2(L^2(G_{00})^4-\sum_{x,y\in\Lambda_1} (G_{xy})^4)
\end{align*}  
as claimed.
 \end{proof}

\smallskip
This allows us to define the hierarchical $\Phi^4_3$ field. 
\begin{definition}\label{d.Phi}
For any $\lambda>0, t\in \R$, there exists a probability measure $\P^{\Phi^4_3}_{\lambda}$ on $\Omega$ which is such that for any finite depth $n$  on has 
\begin{align*}\label{}
\P^{\Phi^4_3}_{\lambda}\Big|_{\calF_n}=Z_n^{-1}\big(\prod_{x\in\Lambda_n}g_n(\phi^n)\big)
 \P^{GFF}\Big|_{\calF_n}
\end{align*}
This measure $\P^{\Phi^4_3}_{\lambda,t}$ is the limit in law as $N\to \infty$ of the GFF at depth $N$ weighted by 
\begin{align*}\label{}
 \exp\Big( -\int_{[0,1]^3}(\lambda :(\varphi^n(x))^4:+\alpha_3\lambda^2\log_L(L^{-N}\lambda):(\varphi^n(x))^4:)dx
\end{align*}
in the topology induced by the finite depth layers $\{\varphi^k_\hr\}_{k\geq 1}$). 
\end{definition}

{
\begin{remark}
This scaling of the coupling constants is consistent with the $\log$-counter terms needed for the construction of the standard (non-hierarchical) $\Phi^4_3$ model (see for example \cite{hairer2014theory,gubinelli2015paracontrolled,kupiainen2016renormalization}). The latter model, under an $\eps$-smoothing, is defined via the following Radon-Nykodym derivative at scale $\eps$ (against the GFF measure $\nu_{\mathrm{GFF}}(d\varphi)$ on, say, the torus $\T^3$): 
\begin{align*}\label{}
\exp\left[- \int_{\T^3} \left(\varphi_\eps(x)^4 - (\frac {C_1} \eps + C_2 \log \eps +C_3) \varphi_\eps(x)^2\right) dx\right]\,.
\end{align*}
This is precisely as in our
 hierarchical setting once we realise that the Wick ordered term is given by  $:(\varphi^n)^4:=(\varphi^n)^4-6L^NG_{00}(\varphi^n)^2+3L^{2N}(G_{00})^2$.
 
\end{remark}
}

We may now state the main result of this Section.
\begin{theorem}\label{th.mainPhi}
For any $\lambda>0$ the hierarchical $\Phi^4_3$ and the hierarchical GFF probability measures are singular, i.e. 
\begin{align*}\label{}
\P^{\Phi^4_3}_{\lambda,t} \perp \P^{GFF}\,. 
\end{align*}
\end{theorem}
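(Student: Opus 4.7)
The plan is to mimic the Sine-Gordon argument from Section \ref{s.SG} almost verbatim, with Wick-ordered quartics playing the role of $\cos(\sqrt{\beta}\phi)$. Define the effective Hamiltonian at scale $n$ by
\begin{equation*}
H_n(\varphi) := \sum_{x\in\Lambda_n} v_n^\infty(\phi^n(x)),
\end{equation*}
where $v_n^\infty=\lim_{N\to\infty}v_n^{(N)}$ is the limiting potential supplied by Proposition \ref{rgiteartion} (so that $H_n$ is, up to an additive constant absorbed in $\log Z_n$, the negative log Radon-Nikodym derivative of $\P^{\Phi_3^4}_\lambda$ with respect to $\P^{GFF}$ restricted to $\mathcal{F}_n$). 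By construction $v_n^\infty$ decomposes into Wick-ordered monomials $\sum_{i=1}^{6} v_{n,i}^\infty :\phi^{2i}: + w_n^\infty$ with $v_{n,2}^\infty = \lambda_n+\mathcal{O}(\lambda_n^2)$, $v_{n,1}^\infty = r(\lambda_n)+\mathcal{O}(\lambda_n^{5/2})$, $v_{n,i}^\infty = \mathcal{O}(\lambda_n^i)$ for $i\geq 3$, and $|w_n^\infty|\leq \lambda_n^{3+\delta}$ on the small field region.

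The first substantive step is to compute the variance of $H_n$ under the GFF. Since every Wick power has mean zero, the constant piece drops out, and by Wick's theorem
\begin{equation*}
\mathrm{Var}_{GFF}[H_n] = \sum_{i,j\geq 1} v_{n,i}^\infty v_{n,j}^\infty (2i)! \, \delta_{ij} \sum_{x,y\in\Lambda_n} G_{xy}^{2i} \;+\; (\text{error from } w_n^\infty).
\end{equation*}
Using $G_{xy}\asymp L^{-k(x,y)}$ (where $k(x,y)$ is the hierarchical depth of the common ancestor) and the fact that the number of $y\in\Lambda_n$ at hierarchical distance $k$ from a fixed $x$ is $\asymp L^{3k}$, one finds $\sum_{x,y\in\Lambda_n} G_{xy}^{2i}\asymp L^{3n}\sum_{k\leq n} L^{(3-2i)k}$. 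The dominant contribution comes from the quartic Wick term $i=2$ (which gives a divergent-as-$n\to\infty$ factor $\asymp L^{3n}$), yielding
\begin{equation*}
\mathrm{Var}_{GFF}[H_n] \asymp \lambda_n^2\, L^{3n} = \lambda^2\, L^{n};
\end{equation*}
the mass counterterm $i=1$ contributes only $\asymp r(\lambda_n)^2 L^{4n}\asymp \lambda^4 n^2$, and the higher Wick powers contribute $o(1)$. The fourth-moment bound $\mathbb{E}_{GFF}[H_n^4]\leq C\,\mathbb{E}_{GFF}[H_n^2]^2$ then follows from Gaussian hypercontractivity (Nelson's inequality), since the leading part of $H_n$ lies in the direct sum of the first four Wiener chaoses; the remainder $\sum_x w_n^\infty(\phi^n(x))$ is $\mathcal{O}(\lambda^{3+\delta}L^{-\delta n})$ on the small field event and contributes negligibly once one throws away the exponentially unlikely large-field event using the bound $|g_n^\infty(\phi)|\leq e^{-\tfrac{1}{2}\lambda_n\phi^4}$ of Proposition \ref{rgiteartion}.

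With these two moment estimates in place, the remainder of the argument proceeds exactly as in Section \ref{s.SG}. Tightness plus non-degeneracy of $H_n/\sqrt{\mathrm{Var}_{GFF}[H_n]}$ yields (along a subsequence $\{n_k\}$) real numbers $u<0<v$ and $\delta>0$ satisfying the analog of \eqref{e.KeyE}. The analogs of Lemmas \ref{l.LLN} and \ref{l.liminf} are then extracted along a sparse sub-subsequence $\{m_\ell\}\subset\{n_k\}$: on the GFF side, the weak $L^2$ law of large numbers applied to the normalized $H_{m_\ell}$ gives almost sure convergence to zero of their Cesar\`o averages, using that $\mathrm{Cov}_{GFF}[H_n,H_m]\asymp \lambda_n\lambda_m \sum_{x\in\Lambda_n,\,y\in\Lambda_m} G_{xy}^4$ is much smaller than $\sqrt{\mathrm{Var}[H_n]\mathrm{Var}[H_m]}$ when $n\ll m$; on the $\Phi_3^4$ side, the Radon-Nikodym tilt $e^{-H_n}$ together with the non-degeneracy \eqref{e.KeyE} and Borel-Cantelli forces $\{H_{m_\ell}/\sqrt{\mathrm{Var}_{GFF}[H_{m_\ell}]}\leq u/2\}$ to occur almost surely, exactly as in \eqref{e.11}-\eqref{e.22}. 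These two almost sure statements are incompatible, proving $\P^{\Phi_3^4}_\lambda\perp \P^{GFF}$.

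The main obstacle I anticipate is the control of the cross-scale covariance $\mathrm{Cov}_{GFF}[H_n,H_m]$ for $n\ll m$. One must be careful that the joint Gaussian structure between $\phi^n$ and $\phi^m$ (built from overlapping layers of the $z^{(k)}$'s) does not produce an anomalously large contribution, in particular from low-order Wick pieces generated by partial pairings of $:\phi^n(x)^4:$ with $:\phi^m(y)^4:$. The straightforward Wick-theorem calculation shows $\mathbb{E}[:\phi^n(x)^4: :\phi^m(y)^4:]=24\,(\mathbb{E}[\phi^n(x)\phi^m(y)])^4$, and since $\mathbb{E}[\phi^n(x)\phi^m(y)]$ factors hierarchically through the common ancestor scale, the cross-covariance turns out to scale no worse than $\lambda_n\lambda_m L^{3n}$, which is $o(L^{(n+m)/2})$; this is enough for the weak LLN step, but verifying it cleanly (especially when working in the IR convention where $\Lambda_n\neq\Lambda_m$) is the technical heart of the argument. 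Everything else is bookkeeping parallel to Section \ref{s.SG}.
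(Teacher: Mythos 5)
Your proposal is correct in substance but follows a genuinely different route from the paper. You propose to repeat the Sine-Gordon machinery verbatim: establish second- and fourth-moment bounds on $H_n$ (the latter via hypercontractivity, which is indeed available here since $H_n$ lives in a fixed finite Wiener chaos), extract a non-degenerate subsequential limit of $H_n/\sqrt{\Var{H_n}}$, decorrelate across well-separated scales to get a weak law of large numbers under the GFF, and then contrast this with a Borel--Cantelli argument under the tilted measure along a sparse sub-subsequence. This would work, and your variance computation $\Var_{GFF}\bigl[H_n\bigr]\asymp\lambda_n^2 L^{3n}=\lambda^2 L^n$ matches the paper's (the paper has an apparent typo writing $L^{2n}$ in place of $L^n$, which does not affect the conclusion).

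The paper, however, does something much simpler and in fact exploits a structural feature of $\Phi^4_3$ that is absent in Sine-Gordon: because the RG iteration in Proposition \ref{rgiteartion} tracks the constant term $\epsilon_n$ explicitly, one knows $Z_n=e^{E_n}$ exactly, and the Lemma following Definition \ref{d.Phi} shows $E_n = \tfrac{1}{2}\Var{H_n}+\caO(1)$ (this is the statement that $H_n$ is approximately Gaussian with mean $\caO(1)$, so $\log\E e^{-H_n}\approx\tfrac12\Var{H_n}$). With this identity in hand, the singular event is simply $\caE_n=\{H_n\geq-\tfrac14\Var{H_n},\ \|\phi^n\|_\infty\leq\rho_n\}$: Chebyshev gives $\P^0(\caE_n^c)\to0$, while on $\caE_n$ the Radon--Nikodym density is $e^{-E_n-H_n-\sum_x w_n}\leq e^{-\tfrac14\Var{H_n}+\caO(1)}\to0$ exponentially, so $\P(\caE_n)\to0$. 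No fourth moments, no subsequences, no LLN; $A=\liminf_n\caE_n$ finishes via Borel--Cantelli. The trade-off is that the paper's route requires the more careful bookkeeping of the free energy $\epsilon_n$ inside the RG flow, whereas your route avoids that but pays with the heavier limit theorem infrastructure — and, as you already flag, you would also need to be explicit about the large-field region: the identity $g_n=e^{-v_n}$ (hence the RN density $\propto e^{-H_n}$) holds only for $|\phi|\leq\rho_n$, so your analog of Lemma \ref{l.liminf} must separately discard the large-field event using the bound $g_n(\phi)\leq e^{-\hf\lambda_n\phi^4}$ from Proposition \ref{rgiteartion}, which the Sine-Gordon template never had to contend with.
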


\subsection{Construction of the singular event.}

Let us denote  
\begin{align*}
P_n:=\P^{\Phi^4_3}_{\lambda,t} |_{\caF_n},\ \ \P^0_n:=\P^{GFF}|_{\caF_n}
\end{align*}
so that
\begin{align*}
\P_n=Z^{-1}_n\big(\prod_{x\in\Lambda_n}g_n(\phi^n_x)\big)P^0_n.
\end{align*} 
where 
we recall 
\begin{equation*}
{\phi^n(x)}=\sum_{m=0}^{n}L^{-\frac{m}{2}}z^{({n-m})}_{[\frac{x}{L^m}]} .
 \end {equation*}
where $z^m$ are independent Gaussian fields on $\Lambda_m$ with covariance 
\begin{align*}
\E^m z^m_xz^m_y=\delta_{xy}-L^{-3}\delta_{[\frac{x}{L}][\frac{y}{L}]}
\end{align*}
We denote by $\E^{\leq n}$ the expectation in $\P_n^0$ and by $\E^n$ the one in the  field $z_n$. Thus with this notation
$$Z_n=\E^{\leq n}\prod_{x\in\Lambda_n}g_n(\phi^n_x):=e^{E_n}.
$$
By Proposition \ref{rgiteartion} we have
\begin{align*}
\E^n\prod_{x\in\Lambda_n}g_n(\phi^n_x)=e^{L^{3(n-1)}\epsilon_n}\prod_{x\in\Lambda_{n-1}}g_{n-1}(\phi^{n-1}_x)
\end{align*}
so that 
\begin{align*}
Z_n=e^{L^{3(n-1)}\epsilon_n}Z_{n-1}.
\end{align*}
Let $\bar n$ be the smallest $n$ s.t. $\lambda_n\leq\bar\lambda$. Then
\begin{align*}
E_n=\sum_{m=0}^{n-\bar n}L^{3(n-m-1)}\epsilon_{n-m}+\log Z_{\bar n-1}
\end{align*}
Let
\begin{align}\label{e.alpha}
\alpha:=\sum_{x,y\in\Lambda_1} (G_{xy})^4-L^2(G_{00})^4
\end{align}
Then using \eqref{epsin} we get
\begin{align}\label{epsin11}
E_n&=12\lambda_n^2\alpha L^{3(n-1)} \sum_{m=0}^{n-\bar n}L^{-3m}L^{2m}
+\caO(\lambda_n^3)L^{3n}+\log Z_{\bar n-1}\\&
=12\lambda_n^2 \alpha (1-L^{-1})^{-1}L^{3(n-1)}+\caO(1)
\end{align}
Next, we recall that for  $|\phi|\leq\rho_n$ 
 \begin{equation}
g_n(\phi)=e^{-p_n(\phi)-w_n(\phi)}
\label{ass1aN}
 \end {equation}
 with
 \begin{align*}
\sup_{|\phi|\leq\rho_n} |w_n(\phi)|\leq \lambda_n^{3+\delta}
\end{align*}
and $p_n(\phi)$ is the even polynomial of degree 12 given as
 \begin{align*}
p_n(\phi)=\sum_{i=1}^{6} v_{n,i}:\phi^{2i}:
\end{align*}
where $ v_{n,i}$ satisfy the estimates of Proposition \ref{rgiteartion}.
Set
\begin{align*}
H_n(\phi)=\sum_{x\in\Lambda_n}p_n(\phi_x)
\end{align*}

\begin{lemma}
\begin{align*}
\E H_n(\phi^n)=\caO(1), \ \ \hf \Var{H_n(\phi^n)}=E_n+\caO(1)
\end{align*}

\end{lemma}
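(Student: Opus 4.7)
My plan is to leverage the identity $E_n=\log Z_n$ already established by the RG iteration, together with a cumulant expansion of $\log Z_n$, to extract both moments of $H_n(\phi^n)$ directly. Writing $Z_n=\E^{\leq n}\bigl[e^{-H_n(\phi^n)-W_n(\phi^n)}\bigr]$ with $W_n:=\sum_{x\in\Lambda_n}w_n(\phi^n_x)$, the bound $\|w_n\|_\infty\leq\lambda_n^{3+\delta}$ on the small-field region combined with the exponential suppression of $g_n$ for large fields would imply $\log\E e^{-H_n-W_n}=\log\E e^{-H_n}+\caO(1)$ uniformly in $n$. Expanding in cumulants,
\begin{align*}
E_n=-\E H_n(\phi^n)+\tfrac12\Var{H_n(\phi^n)}+\sum_{k\geq 3}\frac{(-1)^k}{k!}\kappa_k\bigl(H_n(\phi^n)\bigr)+\caO(1),
\end{align*}
so that both claims of the lemma reduce to (i) a direct estimate of $\E H_n(\phi^n)$, and (ii) a uniform bound $\kappa_k(H_n)=\caO(1)$ for every $k\geq 3$.

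For the first moment, I would compute $\E H_n(\phi^n)$ directly using the Gaussian structure. Since $p_n(\phi)=\sum_{i=1}^{6}v_{n,i}:\phi^{2i}:$ contains only Wick-ordered powers of positive degree, and $\phi^n_x$ is Gaussian with variance $G^n_{00}=G_{00}-\delta_n$ where $\delta_n:=G_{00}L^{-(n+1)}$, the generating identity $\E e^{t\phi^n_x-\tfrac{t^2}{2}G_{00}}=e^{-\tfrac{t^2}{2}\delta_n}$ yields $\E[:(\phi^n_x)^{2i}:]=\frac{(2i)!}{2^i\,i!}(-\delta_n)^i=\caO(L^{-in})$. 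Combined with the volume factor $L^{3n}$ and the coefficient estimates from Proposition \ref{rgiteartion}, the marginal contribution arises at $i=2$, giving $L^{3n}\cdot\lambda_n\cdot\caO(\delta_n^2)=\caO(1)$; the other indices contribute strictly less, yielding $\E H_n(\phi^n)=\caO(1)$.

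For the higher cumulants, multilinearity gives
\begin{align*}
\kappa_k\bigl(H_n(\phi^n)\bigr)=\sum_{\mathbf{i}\in\{1,\dots,6\}^{k}}\Big(\prod_{j=1}^k v_{n,i_j}\Big)\sum_{x_1,\dots,x_k\in\Lambda_n}\kappa_k\bigl(:(\phi^n_{x_1})^{2i_1}:,\dots,:(\phi^n_{x_k})^{2i_k}:\bigr),
\end{align*}
and the Feynman diagram formula for cumulants of Wick-ordered Gaussian polynomials represents each inner cumulant as a sum over connected pairings of the $2(i_1+\cdots+i_k)$ legs across the $k$ vertices (self-pairings forbidden), every edge contributing a factor $G^n_{x_a x_b}$. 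Using the explicit hierarchical decay $|G^n_{xy}|\lesssim L^{-h(x,y)}$ with $h$ the ultrametric distance between $x$ and $y$, the sum over $(x_1,\dots,x_k)$ for any connected pairing is $\caO(L^{3n})$. The dominant tuple $i_1=\cdots=i_k=2$ then yields $v_{n,2}^k\cdot\caO(L^{3n})=\caO(L^{(3-k)n})$, which is $\caO(1)$ at the marginal level $k=3$ and decays geometrically for $k\geq 4$; non-dominant tuples carry extra factors of $\lambda_n$ and are smaller.

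The hardest part will be the $k=3$ cumulant, where the naive power counting is precisely marginal. Verifying the uniform $\caO(1)$ bound there requires careful summation of the connected three-point diagrams, exploiting the ultrametric property of the hierarchical distance (among any three points, at least two pairwise distances coincide) to rule out logarithmic corrections. The explicit second-order computation leading to \eqref{epsin} for $\epsilon_n$ already provides the template for these diagrammatic estimates.
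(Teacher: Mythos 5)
Your proposal takes a genuinely different route from the paper, and there is a real gap at the step you yourself flag as the hardest.

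The paper does not touch the cumulant series for $\log Z_n$. It computes $E_n$ directly from the already-available recursion $Z_n = e^{L^{3(n-1)}\epsilon_n}Z_{n-1}$, with $\epsilon_n = 12\lambda_n^2\alpha + \caO(\lambda_n^3)$ supplied by Proposition \ref{rgiteartion}, and it computes $\Var{H_n(\phi^n)}$ separately by re-expanding $p_n$ in the Wick basis $:\cdot:_n$ adapted to the covariance $G^n$ and using orthogonality of Wick powers plus a scaling recursion for $Y_n=\sum_{x,y}(G_{xy})^4$. The lemma is then verified by observing that the two explicit leading constants agree, namely $12\lambda_n^2\alpha(1-L^{-1})^{-1}L^{3(n-1)}$. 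No bound on any $\kappa_k$ with $k\geq 3$ is ever invoked. Your route instead writes $E_n = -\E H_n+\hf\Var H_n+\sum_{k\geq3}\frac{(-1)^k}{k!}\kappa_k(H_n)+\caO(1)$ and tries to kill every $\kappa_k$, $k\geq3$. In principle this could work, but it trades one explicit leading-order match for an infinite family of estimates.

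The genuine gap is the claim $\kappa_k(H_n)=\caO(1)$ for $k\geq 3$, in particular $k=3$. For the tuple $i_1=i_2=i_3=2$ the only connected pairing of the twelve legs is $n_{12}=n_{13}=n_{23}=2$ (the basketball/sunset), and using $|G_{xy}|\asymp L^{-m_0(x,y)}$ together with ultrametricity one finds
\begin{align*}
\sum_{x,y,z\in\Lambda_n}\big(G_{xy}G_{yz}G_{zx}\big)^2 \;\asymp\; L^{3n}\sum_{m'\leq M\leq n} L^{3m'+3M}\,L^{-4M-2m'} \;\asymp\; n\,L^{3n},
\end{align*}
so this contribution to $\kappa_3$ is $\caO(\lambda_n^3\cdot nL^{3n})=\caO(n)$, not $\caO(1)$. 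This is exactly the expected logarithmic divergence: in $\phi^4_3$ the three-loop vacuum (sunset) diagram is marginal, and it is precisely the graph responsible for the $\lambda^2\log\lambda$ mass counterterm $r(\lambda)=\alpha_3\lambda^2\log_L\lambda$ appearing in Proposition \ref{rgiteartion}. Your blanket claim that ``the sum over $(x_1,\dots,x_k)$ for any connected pairing is $\caO(L^{3n})$'' is also false for the tuple $(1,1,1)$, where $\sum_{x,y,z}G_{xy}G_{yz}G_{zx}\asymp L^{6n}$, giving $v_{n,1}^3\,\caO(L^{6n})\asymp n^3\lambda^6$ once you account for $v_{n,1}\asymp n\lambda_n^2$. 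If your approach is to give the stated $\caO(1)$, one must exhibit a cancellation between the $(2,2,2)$ sunset and the counterterm-carrying tuples containing $v_{n,1}$, and you do not describe this cancellation; you only say it ``requires careful summation''. This is the missing idea, and it is nontrivial: it amounts to re-deriving the renormalization of the vacuum energy that the paper's $\epsilon_n$ computation already packages.

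Two further, smaller issues. First, $\E H_n=\caO(1)$ is not right even by your own calculation once you include the $i=1$ term: $v_{n,1}\asymp n\lambda_n^2$ carries a $\log$, so the $i=1$ contribution to $\E H_n$ is $L^{3n}\cdot\caO(nL^{-2n})\cdot\caO(L^{-n})=\caO(n)$, which dominates your $i=2$ term. This is a purely quantitative slip (and the paper's own $\caO(L^{-3n})$ bound for $v'_{n,0}$ has the same blind spot), but it undermines the specific reduction you wrote, since the $-\E H_n$ and $-\tfrac16\kappa_3$ terms are then both $\caO(n)$ and would need to cancel. Second, the identity $\log Z_n = \log\E e^{-H_n}+\caO(1)$ needs more care than ``$W_n$ is small and the large field region is suppressed'': $H_n$ is a degree-$12$ polynomial in each $\phi^n_x$ whose top coefficient $v_{n,6}$ you have not shown to be positive, so $\E e^{-H_n}$ may not even be finite, and even if finite, the convergence of the cumulant series for $\log\E e^{-H_n}$ (an infinite sum) must be justified. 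The paper avoids both issues entirely because $g_n$ is bounded by construction and $Z_n$ is computed by telescoping the one-step free energies $\epsilon_n$.
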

\begin{proof}

Note that $H_n$ is Wick ordered w.r.t. to the covariance $G_{xy}=\E\phi_x\phi_y$ whereas it is the field $\phi^n$ that is taken expectation  over. A slick way to do the calculation is to expand $H_n(\phi)$ in the Wick powers $:\phi^m:_n$ where we use covariance $G^n_{xy}=\E\phi^n_x\phi^n_y$ instead. We compute:
\begin{align*}
:(\phi^n_x)^{2i}:&=\frac{d^{2i}}{dt^{2i}}|_{t=0}e^{t\phi_x-\frac{t^2}{2}G_{xx}}=\frac{d^{2i}}{dt^{2i}}|_{t=0}\left(e^{t\phi_x-\frac{t^2}{2}G^n_{xx}}e^{\frac{t^2}{2}(G^n_{xx}-G_{xx})}\right)\\&=\sum_{j=0}^ic_j:\phi^{2j}:_{n}(G^n_{xx}-G_{xx})^{i-j}.
\end{align*}
Since $G^n_{xx}-G_{xx}=(1-L^{-3})\sum_{m=n+1}^\infty L^{-m}=\caO(L^{-n})$ we have
\begin{align*}
p_n(\phi)=\sum_{i=0}^{6} v'_{n,i}:\phi^{2i}:_n
\end{align*}
where 
\begin{align*}
v'_{n,0}=\caO(L^{-3n}), \ v'_{n,1}=\caO(L^{-2n}),  \  u'_{n,i}=\caO(L^{-3n}),\ \ i>2
\end{align*}
and $v'_{n,2}=\lambda_n+\caO(\lambda_n^2)$. Hence 
$$\E H_n(\phi^n)=L^{3n}v'_{n,0}=\caO(1).
$$ 
For the variance we note that the Wick powers  satisfy
\begin{align*}
\E:\phi^{2i}:_n:\phi^{2j}:_n=(2i)!(G^n_{xy})^{2i}\delta_{ij}.
\end{align*}
Thus
\begin{align*}
\Var{H_n(\phi^n)}=\sum_{i=1}^{6} A_i(u'_{n,i})^2
\end{align*}
with
\begin{align*}
A_i=(2i)!\sum_{x,y\in\Lambda_n}(G^n_{xy})^{2i}
\end{align*}
Since $G^n_{xy}\leq C(1+d(x,y))^{-1}$ we have 
$A_1=\caO(L^{n}L^{3n})$ ,$A_i=\caO(L^{3n})$ for $i>1$ and furthermore
\begin{align*}
A_2=4!\sum_{x,y\in\Lambda_n}(G^n_{xy})^{4}=4!\sum_{x,y\in\Lambda_n}(G_{xy})^{4}+\caO(L^{2n}).
\end{align*}
Hence
\begin{align*}
\Var{H_n(\phi^n)}=4! \lambda_n^2\sum_{x,y\in\Lambda_n}(G_{xy})^{4}+\caO(1)
\end{align*}
To compute the sum we define
\begin{align*}
X_n:=\sum_{x\neq y\in\Lambda_n}(G_{xy})^{4}
\end{align*}
Using $G_{xy}=L^{-1}G_{[\frac{x}{L}][\frac{y}{L}]}+\Gamma_{xy}$ and $\Gamma_{xy}=0$ if $[\frac{x}{L}]\neq [\frac{y}{L}]$ we get the recursion
\begin{align*}
X_n&=\sum_{x\neq y\in\Lambda_n}(L^{-1}G_{[\frac{x}{L}][\frac{y}{L}]}+\Gamma_{xy})^{4}\\&=L^{-4}\sum_{[\frac{x}{L}]\neq[\frac{y}{L}]}(G_{[\frac{x}{L}][\frac{y}{L}]})^4+L^{3(n-1)}\sum_{x\neq y\in\Lambda_1}(G_{xy})^4\\&=L^2X_{n-1}+L^{3(n-1)}X_1
\end{align*}
Thus
\begin{align*}
X_n=\sum_{m=0}^{n-1}L^{3(n-m-1)}L^{2m}X_1=\frac{L^{3(n-1)}X_1}{1-\frac{1}{L}}+\caO(L^{2n})
\end{align*}
Hence
\begin{align*}
Y_n:=\sum_{x,y\in\Lambda_n}(G_{xy})^{4}=L^{3n}(G_{00})^4+\frac{L^{3(n-1)}X_1}{1-\frac{1}{L}}+\caO(L^{2n})
\end{align*}
Using $X_1=Y_1-L^3(G_{00})^4=\alpha+(L^2-L^3)(G_{00})^4$ (recall $\alpha$ was defined in~\eqref{e.alpha}), some calculation gives 
\begin{align*}
Y_n=\frac{L^{3(n-1)}\alpha}{1-\frac{1}{L}}+\caO(L^{2n})
\end{align*}

\end{proof}

Consider  now the following event:
\begin{align*}
\caE_n=\{H_n\geq 
-\frac{1}{4} \Var{H_n}\,\,\&\,\,  \|\phi^n\|_\infty\leq \rho_n\}.
\end{align*}

Then, under $\caE_n$ we have  
\begin{align*}
Z_n^{-1}\prod_{x\in\Lambda_n}g_n(\phi^n_x)=e^{-E_n-H_n-\sum_xw_n(\phi^n_x)}\leq e^{-\frac{1}{4} \Var{H_n}+\caO(1)}
\end{align*}
and thus
\begin{align*}
\P(\caE_n)\leq 
e^{-c \lambda^2 L^{2n}}
\P^0(\caE_n).
\end{align*}
On the other hand we have thanks to an immediate union bound
\begin{lemma}
$\P^0(\caE^c_n)\leq C\lambda^{-2}L^{-2n}.$
\end{lemma}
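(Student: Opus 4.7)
The plan is to decompose $\caE_n^c$ by a union bound and estimate each of the two parts separately, matching the pattern of Proposition \ref{pr.singI} and Lemma \ref{l.liminf}: a Chebyshev-type estimate to control the energy deviation, and a Gaussian union bound to control the sup-norm of the field.

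First I would write
\begin{equation*}
\P^0(\caE_n^c)\leq \P^0\bigl(H_n(\phi^n)<-\tfrac14\Var{H_n}\bigr)+\P^0\bigl(\|\phi^n\|_\infty>\rho_n\bigr).
\end{equation*}
For the second summand, each coordinate $\phi^n(x)$ is a centered Gaussian whose variance $G^n_{xx}$ is bounded uniformly in $n$ (since $G^n_{xx}\leq G_{xx}=(1-L^{-1})^{-1}(1-L^{-3})$). A standard Gaussian tail bound together with a union bound over $|\Lambda_n|=L^{3n}$ points gives
\begin{equation*}
\P^0\bigl(\|\phi^n\|_\infty>\rho_n\bigr)\leq 2L^{3n}e^{-c\rho_n^2}=2L^{3n}\exp\bigl(-c\lambda^{-(1+\delta)/2}L^{n(1+\delta)/2}\bigr),
\end{equation*}
which decays super-polynomially in $n$ and is therefore negligible compared to the target $C\lambda^{-2}L^{-2n}$.

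For the first summand I would use the moment bounds from the previous lemma, namely $\E H_n(\phi^n)=\caO(1)$ and $\Var{H_n(\phi^n)}=2E_n+\caO(1)$ together with the explicit value $E_n\asymp \lambda^2 L^n$ given by~\eqref{epsin11}. In particular $\tfrac14\Var{H_n}\geq |\E H_n|$ for all $n$ large, so by Chebyshev,
\begin{equation*}
\P^0\bigl(H_n<-\tfrac14\Var{H_n}\bigr)\leq \P^0\bigl(|H_n-\E H_n|>c\Var{H_n}\bigr)\leq \frac{C}{\Var{H_n}}\asymp \frac{C}{\lambda^2 L^n}.
\end{equation*}
To reach the stated bound with $L^{-2n}$ one replaces the second moment by the fourth moment. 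Writing $H_n-\E H_n=\sum_{i=1}^6 v'_{n,i}\sum_{x\in\Lambda_n}:\phi_x^{2i}:_n$, the dominant contribution comes from $i=2$, which is a homogeneous Wick polynomial of degree $4$ in the Gaussian field $\phi^n$. Wick's theorem (equivalently Gaussian hypercontractivity) gives $\E[(H_n-\E H_n)^4]\leq C\Var{H_n}^2$; the lower-order terms ($i\neq 2$) contribute strictly less by the estimates $v'_{n,0}=\caO(L^{-3n})$, $v'_{n,1}=\caO(L^{-2n})$, $v'_{n,i}=\caO(\lambda_n^{i})$ for $i\geq 3$ computed in the variance lemma. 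A fourth-moment Markov inequality then yields
\begin{equation*}
\P^0\bigl(|H_n-\E H_n|>\tfrac14\Var{H_n}\bigr)\leq \frac{C\Var{H_n}^2}{(\Var{H_n}/4)^4}=\frac{C'}{\Var{H_n}^2}\asymp \frac{C'}{\lambda^4 L^{2n}},
\end{equation*}
giving the claimed decay in $L$ (the factor $\lambda^{-2}$ versus $\lambda^{-4}$ is absorbed into the constant $C$ for $\lambda$ bounded away from $0$).

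The main obstacle is the $L^{-2n}$ exponent: the bare Chebyshev inequality is off by a factor $L^n$, so one must use the fourth moment and in particular show that the $i=2$ quartic Wick term really does control the centered fourth moment of $H_n$ up to a multiplicative constant times $\Var{H_n}^2$. Once this is done, this estimate is what makes the family $\{\caE_n^c\}$ Borel--Cantelli summable, which, together with the weight bound $\P(\caE_n)\leq e^{-c\lambda^2 L^{2n}+\caO(1)}\P^0(\caE_n)$ already derived, will close the singularity argument: almost surely under $\P^0$, the event $\caE_n$ eventually holds and so under $\P=\P^{\Phi^4_3}_\lambda$ the Radon--Nikodym density tends to zero, witnessing $\P\perp\P^0$.
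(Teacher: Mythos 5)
Your union bound, Gaussian tail estimate, and Chebyshev step together constitute precisely the ``immediate union bound'' the paper alludes to, so the first half of your argument is the paper's argument. However, note what it actually produces: from $\hf\Var{H_n}=E_n+\caO(1)$ and \eqref{epsin11} one has $\Var{H_n}\asymp \lambda_n^2L^{3n}=\lambda^2L^{n}$, so Chebyshev gives $\P^0\bigl(H_n<-\tfrac14\Var{H_n}\bigr)\lesssim \Var{H_n}^{-1}\asymp \lambda^{-2}L^{-n}$, and the Gaussian sup-tail term is much smaller. That is $L^{-n}$, not $L^{-2n}$, and it is hard to see how a one-line union bound could do better given the established moment estimates. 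The exponent $L^{-2n}$ in the lemma statement, together with the subsequent display $\P(\caE_n)\leq e^{-c\lambda^2L^{2n}}\P^0(\caE_n)$, appears to carry a typographical $L^{2n}$ for $L^{n}$: the Radon--Nikodym exponent on $\caE_n$ is $-\tfrac14\Var{H_n}+\caO(1)\asymp -\lambda^2L^{n}$, not $-\lambda^2L^{2n}$. Either exponent is summable, so the Borel--Cantelli conclusion and the singularity theorem are unaffected.

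Given this, your fourth-moment/hypercontractivity upgrade is a correct and genuinely different route. Since $H_n-\E H_n$ lives in a finite sum of Wiener chaoses of bounded order ($\leq 12$), Gaussian hypercontractivity does give $\E[(H_n-\E H_n)^4]\leq C(\E[(H_n-\E H_n)^2])^2$ with an absolute constant, and fourth-moment Markov then yields $\P^0(H_n<-\tfrac14\Var{H_n})\lesssim\Var{H_n}^{-2}\asymp\lambda^{-4}L^{-2n}$. This is the only way to reach the literal $L^{-2n}$ claim, and it is strictly more than the paper's intended one-liner; it buys a sharper rate at the cost of a worse $\lambda$-power ($\lambda^{-4}$ rather than $\lambda^{-2}$), which is harmless here since $\lambda$ is fixed. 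In short: the Chebyshev half of your proof is the paper's proof and shows the stated exponent is likely a typo; the fourth-moment half is a valid strengthening that is not needed for the downstream argument.
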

Thus  $\P(\caE_n)\to 0$ whereas $\P^0(\caE_n)\to 1$ and furthermore if we consider the event 
\begin{align*}\label{}
A:= \liminf_{n \to \infty }  \calE_n \,,
\end{align*}
we obtain by Borel-Cantelli that this event is detecting the desired singularity, namely 
\begin{align*}\label{}
\P(A)=0 \text{   while    } \P^0(A)=1 \,.     
\end{align*}
\qed

\section{Final remarks and possible directions}
We list below a short list of natural directions and questions. 
\bnum
\item It would be interesting to relate the singularity for the Sine-Gordon field at the $L^2$-threshold $\beta=4\pi$  with the {\em Coleman correspondence at the free fermion point} established in
\cite{bauerschmidt2020coleman}. Recall the singularity for the non-hierarchical case has been obtained on $[4\pi, 6\pi)$ in \cite{gubinelli2024fbsde}.

\item Hierarchical $O(N)$ non-linear $\sigma$-model when $N\geq 2$ seem to be singular w.r.t the hierarchical GFF as suggested by the work \cite{gawedzki1986continuum}. 

\item $\Phi^4_3$ fields on curved spaces have recently been constructed in \cite{bailleul2023phi,hairer2023regularity}. It may be interesting to wonder whether curvature produces any ``local'' singular behaviour w.r.t to the flat case (in which ever sense). 
\item Quid of 2D Yang-Mills measures? See for example \cite{levy2003yang,chevyrev2019yang,chandra2022langevin}.
\enum

\begin{remark}\label{}
Finally, one may wonder what happens with the {\em Liouville field}. Indeed, if one does not pay attention at the effect of insertions (which are of key importance for Liouville CFT), the Liouville field is nothing but a GFF field weighted by a Radon-Nikodym derivarive of the form $\exp(-\mu \int_{\T^2} \Wick{e^{\gamma \phi(x)}} dx)$. It is not hard to check in this case that the induced non-Gaussian field remains absolutely continuous w.r.t GFF all the way to the $L^1$ threshold $\gamma_c=\sqrt{2d}$. As opposed to the Sine-Gordon field which exists beyond the existence of a complex multiplicative chaos, we do not expect an interesting singular field theory to exist beyond this $L^1$ threshold. 
\end{remark}

\medskip
\medskip

\bibliographystyle{alpha}
\addtocontents{toc}{\SkipTocEntry}
\bibliography{singularity}

%

\end{document}